\crefname{property}{Property}{Properties}
\crefname{question}{Question}{Questions}
\newcommand\fake@math{}% just for safety
\def\fake@math#1\){[math]}
\newtheorem{theorem}{Theorem}[section]
\newtheorem{corollary}[theorem]{Corollary}
\newtheorem{observation}[theorem]{Observation}
\newtheorem{claim}{Claim}
\newtheorem{question}{Question}
\newtheorem{proposition}[theorem]{Proposition}
\newtheorem{lemma}[theorem]{Lemma}
\theoremstyle{definition}
\newtheorem{definition}[theorem]{Definition}
\newtheorem{remark}[theorem]{Remark}
\newtheorem{example}[theorem]{Example}
\newcommand{\cH}{\ensuremath{\mathcal{H}}}
\newcommand{\C}{\ensuremath{\mathcal{C}}}
\newcommand{\G}{\ensuremath{\mathcal{G}}}
\newcommand{\F}{\ensuremath{\mathcal{F}}}
\def\gD{\Delta}
\def\cH{\mathcal{H}}
\newcommand{\XK}{K'}
\newcommand{\XI}{I'}
\title{Conformality of Minimal Transversals of Maximal Cliques}
\author{Endre Boros\\
\small MSIS Department and RUTCOR, Rutgers University, New Jersey, USA\\
\small \texttt{Endre.Boros@rutgers.edu}\\
\and
Vladimir Gurvich\\
\small RUTCOR, Rutgers University, New Jersey, USA\\
\small National Research University Higher School of Economics, Moscow, Russia\\
\small \texttt{vladimir.gurvich@gmail.com}\and
Martin Milani\v c\\
\small FAMNIT and IAM, University of Primorska, Koper, Slovenia\\
\small \texttt{martin.milanic@upr.si}
\and
Dmitry Tikhanovsky\\
\small National Research University Higher School of Economics, Moscow, Russia\\
\small \texttt{d.tikhanovsky@vk.com}
\and
Yushi Uno\\
\small Graduate School of Informatics,
\small Osaka Metropolitan University,
\small Sakai, Osaka, Japan\\
\small \texttt{yushi.uno@omu.ac.jp}
}
\date{}
\begin{document}
\maketitle

\begin{abstract}
Given a hypergraph $\mathcal{H}$, the dual hypergraph of $\mathcal{H}$ is the hypergraph of all minimal transversals of $\mathcal{H}$.
A~hypergraph is conformal if it is the family of maximal cliques of a graph. 
In a recent work, Boros, Gurvich, Milani{\v c}, and Uno (Journal of Graph Theory, 2025) studied conformality of dual hypergraphs and proved several results related to this property, leading in particular to a polynomial-time algorithm for recognizing graphs in which all minimal transversals of maximal cliques have size at most $k$, for any fixed~$k$.
In this follow-up work, we provide a novel aspect to the study of graph clique transversals, by considering the dual conformality property from the perspective of graphs.
More precisely, we study graphs for which the family of minimal transversals of maximal cliques is conformal.
Such graphs are called clique dually conformal (CDC for short).
It turns out that the class of CDC graphs is a rich generalization of the class of $P_4$-free graphs.
As our main results, we completely characterize CDC graphs within the families of triangle-free graphs and split graphs.
Both characterizations lead to polynomial-time recognition algorithms.
Generalizing the fact that every $P_4$-free graph is CDC, we also show that the class of CDC graphs is closed under substitution, in the strong sense that substituting a graph $H$ for a vertex of a graph $G$ results in a CDC graph if and only if both $G$ and $H$ are CDC.

\bigskip
\noindent{\bf Keywords:} 
maximal clique, minimal transversal, conformal hypergraph, triangle-free graphs, split graphs\\

\bigskip
\noindent{\bf MSC (2020):}  
05C75, %Structural characterization of families of graphs (primary)
05C69, % Vertex subsets with special properties (dominating sets, independent sets, cliques, etc.) (primary)
05C65, % Hypergraphs (secondary)
05D15, % Transversal (matching) theory (secondary)
05C85 % Graph algorithms (secondary)
\end{abstract}

\section{Introduction}

In this paper we consider some properties of graphs related to maximal cliques and their minimal transversals.
These are closely related to certain hypergraph concepts, which we now recall.

A \emph{hypergraph} is a finite set of finite sets called \emph{hyperedges} (see~\Cref{sec:prelim} for more details). 
A hypergraph is said to be \emph{Sperner}~\cite{MR1544925} (also called \emph{simple}~\cite{zbMATH03400923,MR1013569} or a \emph{clutter}~\cite{zbMATH01859168}) if no hyperedge contains another, and \emph{conformal} if any set of vertices such that any two belong to a hyperedge is itself contained in a hyperedge (see, e.g.,~\cite{zbMATH01859168}). 
Sperner hypergraphs and conformal hypergraphs have been extensively studied in the literature, due to their numerous applications in combinatorics and in many other fields of mathematics and computer science (see, e.g.,~\cite{MR0892525, MR1429390,BeeriFMY83,zbMATH03400923}).
Sperner hypergraphs enjoy a useful duality relation via the operation mapping a Sperner hypergraph $\cH$ to its \emph{dual hypergraph} $\cH^d$ (also called the \emph{blocker} of $\cH$; see, e.g., Schrijver~\cite{zbMATH01859168}), defined as the collection of all \emph{minimal transversals} (also called \emph{minimal hitting sets}), that is, inclusion-wise minimal sets of vertices intersecting each hyperedge in at least one vertex.
The useful duality relation states that $\cH^{dd}= \cH$, that is, when restricted to the family of Sperner hypergraphs, the duality operator is an involution (see, e.g., Berge~\cite{zbMATH03400923}, Schrijver~\cite{zbMATH01859168}, and Crama and Hammer~\cite{zbMATH05852793}).
A similar duality holds for Sperner hypergraphs that are also conformal, for the operator of mapping a conformal Sperner hypergraph $\cH$ to its \emph{antiblocker} $\cH^a$, defined as the set of all inclusion-wise maximal sets of vertices intersecting each hyperedge in at most one vertex.
If $\cH$ is conformal and Sperner, then $\cH^{aa}= \cH$ (as shown by Woodall~\cite{zbMATH03606472,zbMATH03674127}; see also Schrijver~\cite{zbMATH01859168}).

While the antiblocker of any hypergraph is both conformal and Sperner, the dual hypergraph of any hypergraph is always Sperner but may fail to be conformal.
This observation leads to the concept of \emph{dually conformal} hypergraphs, defined as hypergraphs whose dual is conformal.
Variants of dual conformality are important for the dualization problem (see Khachiyan, Boros, Elbassioni, and Gurvich~\cite{zbMATH02245960,MR2352109,MR2287281}).
While the complexity of \textsc{Dual Conformality}, that is, the problem of recognizing dually conformal hypergraphs, is an open problem, in a recent work, Boros, Gurvich, Milani{\v c}, and Uno~\cite{boros2023dually} showed that the problem belongs to {\sf co-NP} and developed a polynomial-time algorithm for the case of hypergraphs with bounded size hyperedges.

The close connections with graphs stem from the fact that hypergraphs that are both conformal and Sperner are precisely the collections of maximal cliques of graphs (see~\cite{BeeriFMY83}).
More precisely, for every conformal Sperner hypergraph $\cH$, there exists a graph $G$ such that $\cH$ is the \emph{clique hypergraph} $\C(G)$ of $G$, the hyperedges being exactly the maximal cliques of~$G$.
For example, using this connection, the fact that $\cH^{aa}=\cH$ when restricted to conformal Sperner hypergraphs is a simple consequence of the fact that graph complementation operation is an involution.
Furthermore, exploiting the connection with graphs, the approach from~\cite{boros2023dually} was shown to have applications in algorithmic graph theory, leading to a polynomial-time algorithm for checking, for any fixed positive integer $k$, if the upper clique transversal number of a given graph $G$ is at most~$k$.
The upper clique transversal number of a graph is defined as the maximum cardinality of a minimal transversal of maximal cliques; we refer to the recent work of Milani{\v c} and Uno~\cite{MilanicUnoWG2023} for more details.

An interesting special case of \textsc{Dual Conformality} is the case when the input hypergraph is conformal, or, equivalently, is the hypergraph of all maximal cliques of some graph.
This leads to the following property of graphs introduced in~\cite{boros2023dually}. 
A graph $G$ is said to be \emph{clique dually conformal (CDC)} if its clique hypergraph is dually conformal.

The class of CDC graphs turns out to be quite rich.
While we cannot completely characterize them, and even the complexity of their recognition is open, we provide many interesting classes of CDC graphs.
Our work provides a novel aspect to the study of graph clique transversals, which has been a subject of extensive investigation in the literature (see, e.g.,~\cite{MR539710,MR1099264,MR1189850,MilanicUnoWG2023,boros2023dually,MR1201987,MR1375117,MR1413638,MR1423977,MR1737764,MR4213405,MR4264990,MR3875141,MR3350239,MR3325542,MR3131902,MR2203202}).

\subsection*{Our results}

We construct several infinite families of CDC graphs (see~\Cref{sec:examples}) and obtain the following results:
\begin{sloppypar}
\begin{enumerate}
    \item The \emph{substitution} operation takes as input two graphs and substitutes the first graph for a vertex of the second (see~\Cref{sec:substitution} for a precise definition). 
    We show that the class of CDC graphs is closed with respect
to substitution, in the strong sense that a graph constructed from two smaller graphs via substitution
is CDC if and only if both constituent graphs are CDC  (\Cref{thm:substitution-CDC}). 
    \item A graph is \emph{$P_4$-free} if it does not contain an induced path on $4$ vertices.
    We show that $P_4$-free graphs are CDC (\Cref{cor:P4-free}).
    \item A graph is \emph{triangle-free} if it does not contain three pairwise adjacent vertices. 
  We provide a characterization of triangle-free CDC graphs (\Cref{triangle-free-CDC}), leading to a polynomial-time recognition algorithm (\Cref{triangle-free-CDC-recognition}).
     \item A graph is \emph{split} if its vertex set can be partitioned into a clique and an independent set.
     We give a characterization of split CDC graphs (\Cref{CDC-split-graphs-characterization}), leading to a polynomial-time recognition algorithm (\Cref{cor:split-CDC-recognition}).
\end{enumerate}
\end{sloppypar}

An important concept in developing these results is the \emph{clique-dual} transformation, which associates to any graph $G$ another graph $G^c$ with the same vertex set, in which two vertices are adjacent if and only if they belong to a minimal transversal of the maximal cliques of $G$ (see \Cref{sec:clique-dual}).
In particular, it turns out that the class of CDC graphs is closed not only under substitution but also under taking the clique-dual.

Let us also remark that triangle-free CDC graphs are related to two well-known graph classes: K\H{o}nig-Egerv\'ary graphs and well-covered graphs (see \Cref{sec:triangle-free-CDC}).

\subsection*{Structure of the paper}

\begin{sloppypar}
In \Cref{sec:prelim} we summarize the necessary preliminaries. 
In \Cref{sec:clique-dual} we present some basic properties of the clique-dual transformation.
In \Cref{sec:examples} we construct infinite families of CDC and non-CDC graphs.
In \Cref{sec:substitution} we show that the class of CDC graphs is closed under substitution.
In \Cref{sec:triangle-free-CDC,sec:split-CDC-graphs} we characterize the CDC graphs within the classes of triangle-free and split graphs, respectively.
In \Cref{sec:cycles} we discuss a discrete dynamical system related to CDC graphs.
We conclude the paper in \Cref{sec:discussion} with several open questions.
\end{sloppypar}

\section{Preliminaries}\label{sec:prelim}

\noindent{\bf Graphs.}
All graphs considered in this paper are finite, simple, and undirected, except for \Cref{sec:cycles}, where we also consider directed graphs.
The \emph{neighborhood} of a vertex $v\in V(G)$, that is, the set of vertices adjacent to $v$ in $G$, is denoted by $N_G(v)$.
The \emph{closed neighborhood} of $v$ is denoted by $N_G[v]$ and defined as $N_G(v)\cup \{v\}$.
Given a set  $S\subseteq V(G)$, the \emph{neighborhood} of $S$ is denoted by $N_G(S)$ and defined as the set of vertices in $V(G)\setminus S$ that have a neighbor in~$S$.
In all these notations, the subscript $G$ is omitted when the graph is clear from context.
A \emph{clique} in a graph is a set of pairwise adjacent vertices, an \emph{independent set} (also called a \emph{stable set}) is a set of pairwise non-adjacent vertices, a \emph{vertex cover} is a set of vertices intersecting all edges, a \emph{matching} is a set of pairwise disjoint edges, and a matching is \emph{perfect} if every vertex belongs to a matching edge.
A clique (resp., independent set) is \emph{maximal} if it is not contained in any larger clique (resp., independent set).
A \emph{clique transversal} in a graph is a set of vertices containing at least one vertex from each maximal clique; a clique transversal is \emph{minimal} if it does not contain any smaller clique transversal.
Given a graph $G$,  its complement $\overline{G}$ is defined by the same vertex set, $V(\overline{G}) = V(G)$, and the complementary edge set: two distinct vertices $u,v\in V(G)$ are adjacent in $\overline{G}$ if and only if they are nonadjacent in~$G$.
(Recall that we restrict ourselves to simple graphs.)
A graph is \emph{triangle-free} if it does not have a clique of size three, \emph{split} if its vertex set can be partitioned into a clique and an independent set, and \emph{cobipartite} if its complement is bipartite, or, equivalently, its vertex set is a union of two cliques.
We denote by $\cong$ the graph isomorphism relation.

\medskip
\noindent{\bf Hypergraphs.}
A \emph{hypergraph} is a pair $\cH = (V,E)$ where $V$ is a finite set of \emph{vertices} and $E$ is a set of subsets of $V$ called \emph{hyperedges} such that every vertex belongs to a hyperedge.
For a hypergraph $\cH = (V,E)$ we write $E(\cH) = E$ and $V(\cH) = V$, and denote by $\dim(\cH)=
\max_{e\in E} |e|$ its \emph{dimension}. 
We only consider graphs and hypergraphs with nonempty vertex sets.
For a vertex $v\in V$ its degree $\deg(v)=\deg_\cH(v)$ is the number of hyperedges in $E$ that contain $v$ and $\gD(\cH)=\max_{v\in V} \deg(v)$ is the maximum degree of~$\cH$.
A hypergraph is \emph{Sperner} if no hyperedge contains another, or, equivalently, if every hyperedge is maximal.
Given a hypergraph $\cH$, its \emph{co-occurrence graph} is the graph $G(\cH)$ with vertex set $V(\cH)$ that has an edge between two distinct vertices $u$ and $v$ if there is a hyperedge $e$ of $\cH$ that contains both $u$ and~$v$.

\medskip
\noindent{\bf Conformal hypergraphs.}
We recall a characterization of conformal graphs due to Gilmore.

\begin{theorem} [Gilmore~\cite{gilmore1962families}; see also~\cite{zbMATH03485854,zbMATH03400923,MR1013569}]
\label{conformal hypergraphs}
A hypergraph $\cH = (V,E)$ is conformal if and only if for every three hyperedges $e_1,e_2,e_3\in E$ there exists a hyperedge $e\in E$ such that $$(e_1\cap e_2)\cup (e_1\cap e_3)\cup (e_2\cap e_3)\subseteq e\,.$$
\end{theorem}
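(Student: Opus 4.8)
The plan is to prove both implications directly, after reformulating conformality in the language of the co-occurrence graph $G(\cH)$: by the definition recalled above, $\cH$ is conformal precisely when every clique of $G(\cH)$ (that is, every vertex set $S$ such that any two of its vertices lie together in some hyperedge) is contained in a single hyperedge. The condition in the statement, which I will call the \emph{Gilmore condition}, is a ``$3$-local'' assertion about triples of hyperedges, whereas conformality is a ``global'' assertion about cliques of unbounded size; the content of the theorem is that these are equivalent.

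For the forward direction, I would assume $\cH$ is conformal, fix three hyperedges $e_1,e_2,e_3$, and set $S = (e_1\cap e_2)\cup(e_1\cap e_3)\cup(e_2\cap e_3)$. The goal is to show that $S$ is a clique of $G(\cH)$, after which conformality immediately yields a hyperedge $e\supseteq S$, which is exactly the Gilmore condition. To see that $S$ is a clique, take any two vertices $u,v\in S$; each lies in some pairwise intersection, say $u\in e_i\cap e_j$ and $v\in e_k\cap e_l$ with $\{i,j\},\{k,l\}\subseteq\{1,2,3\}$. Since any two two-element subsets of a three-element set share an index (pigeonhole), there is a common index $m$ with $u,v\in e_m$, so $\{u,v\}$ is an edge of $G(\cH)$.

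The backward direction is the substantive part, and I would prove it by induction on $|S|$, where $S$ ranges over cliques of $G(\cH)$. The cases $|S|\le 2$ are immediate from the definitions: every vertex lies in some hyperedge, and an edge of $G(\cH)$ lies by definition in a common hyperedge. For $|S|=n\ge 3$, the key idea is to pick three distinct vertices $v_1,v_2,v_3\in S$ and apply the induction hypothesis to the three ``leave-one-out'' cliques $S_i = S\setminus\{v_i\}$, each of size $n-1$, obtaining hyperedges $e_i\supseteq S_i$. The crux is the observation that every $w\in S$ coincides with at most one of $v_1,v_2,v_3$, hence belongs to $S_i$ (and so to $e_i$) for at least two indices $i$; therefore $S\subseteq (e_1\cap e_2)\cup(e_1\cap e_3)\cup(e_2\cap e_3)$. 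Applying the Gilmore condition to $e_1,e_2,e_3$ produces a hyperedge $e$ containing this union, hence containing $S$, which closes the induction.

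I expect the main obstacle to be setting up this induction cleanly, since the nontrivial insight is precisely the choice of the three leave-one-out sets together with the verification that their pairwise intersections cover $S$; this is the step that bridges the $3$-local Gilmore condition and cliques of arbitrary size. Once that covering is established, both the base cases and the inductive step reduce to routine bookkeeping, and the forward direction is a short pigeonhole argument.
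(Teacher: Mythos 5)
The paper does not prove this statement at all: it is quoted as a known result of Gilmore, with the proof deferred to the cited references (Gilmore, Zykov, Berge). Your argument is correct and complete on its own terms, and it is in fact the classical proof: the forward direction is exactly the pigeonhole observation that any two $2$-element subsets of $\{1,2,3\}$ share an index, so the union of pairwise intersections is a clique of the co-occurrence graph $G(\cH)$; the converse is the standard induction on clique size, where the only nontrivial idea is the one you identify -- each vertex of a clique $S$ lies in at least two of the leave-one-out sets $S\setminus\{v_i\}$, so $S$ is covered by the pairwise intersections of the three hyperedges supplied by the induction hypothesis. Both base cases are legitimate under the paper's conventions (every vertex belongs to a hyperedge, and adjacency in $G(\cH)$ means co-occurrence in a hyperedge), so there is no gap.
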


The following characterization of conformal Sperner  hypergraphs due to Beeri, Fagin, Maier, and Yannakakis~\cite{BeeriFMY83} (see also Berge~\cite{zbMATH03400923,MR1013569} for the equivalence between \cref{SC-item1,SC-item2}) establishes a connection between conformal Sperner hypergraphs and graphs.

\begin{theorem}[\cite{BeeriFMY83}; see also~\cite{zbMATH03400923,MR1013569}]\label{Sperner-conformal}
For every Sperner hypergraph $\cH$, the following properties are equivalent.
\begin{enumerate}
\item\label[property]{SC-item1} $\cH$ is conformal.
\item\label[property]{SC-item2} $\cH$ is the clique hypergraph of some graph.
\item\label[property]{SC-item3} $\cH$ is the clique hypergraph of its co-occurrence graph.
\end{enumerate}
\end{theorem}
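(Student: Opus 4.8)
The plan is to prove the equivalences by establishing the cycle of implications $\text{\cref{SC-item3}} \Rightarrow \text{\cref{SC-item2}} \Rightarrow \text{\cref{SC-item1}} \Rightarrow \text{\cref{SC-item3}}$, working throughout from the direct definition of conformality (a set whose every pair lies in a common hyperedge is itself contained in a hyperedge) rather than from Gilmore's characterization, since the definition is exactly what the argument needs. The first implication, $\text{\cref{SC-item3}} \Rightarrow \text{\cref{SC-item2}}$, is immediate: if $\cH$ is the clique hypergraph of its co-occurrence graph $G(\cH)$, then it is in particular the clique hypergraph of some graph.

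For $\text{\cref{SC-item2}} \Rightarrow \text{\cref{SC-item1}}$, I would suppose $\cH = \C(G)$ for a graph $G$ and unwind the definition of conformality. If $S \subseteq V(\cH)$ is a set such that any two of its elements lie in a common hyperedge, i.e.\ in a common maximal clique of $G$, then any two vertices of $S$ are adjacent in $G$, so $S$ is a clique of $G$ and hence is contained in some maximal clique. That maximal clique is a hyperedge of $\cH$, so $\cH$ is conformal.

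The substantive step is $\text{\cref{SC-item1}} \Rightarrow \text{\cref{SC-item3}}$. Writing $G = G(\cH)$, I would show that the hyperedges of $\cH$ are exactly the maximal cliques of $G$. That every hyperedge $e$ is a clique of $G$ is immediate, since any two of its vertices co-occur in $e$ itself. To see $e$ is \emph{maximal}, suppose $e \cup \{v\}$ were a clique for some $v \notin e$; then $v$ is adjacent in $G$ to each $u \in e$, so $v$ and $u$ co-occur in some hyperedge, and hence every pair of vertices of $e \cup \{v\}$ lies in a common hyperedge. Conformality then places $e \cup \{v\}$ inside a hyperedge, which properly contains $e$ and contradicts the Sperner hypothesis. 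Conversely, if $K$ is a maximal clique of $G$, then every pair of its vertices co-occurs in a hyperedge, so conformality yields a hyperedge $e$ with $K \subseteq e$; but $e$ is itself a clique of $G$ and $K$ is maximal, forcing $K = e$, so $K$ is a hyperedge. This gives $\cH = \C(G(\cH))$.

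The one delicate point, and the place where the two hypotheses genuinely interact, is the maximality argument: conformality alone guarantees that any pairwise-co-occurring set extends to a hyperedge, but it is precisely the Sperner property that forbids one hyperedge from properly containing another, and this is what pins the hyperedges down to be exactly the maximal cliques. I expect this interplay between conformality and Sperner-ness to be the crux of the proof; everything else is a direct translation between the hypergraph definitions and the adjacency structure of $G(\cH)$.
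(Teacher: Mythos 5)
Your proof is correct. There is nothing in the paper to compare it against: the paper states \Cref{Sperner-conformal} as a known result of Beeri, Fagin, Maier, and Yannakakis (with the equivalence of \cref{SC-item1,SC-item2} attributed also to Berge) and gives no proof of it. Your argument is the standard self-contained one: the cycle $\text{\cref{SC-item3}} \Rightarrow \text{\cref{SC-item2}} \Rightarrow \text{\cref{SC-item1}} \Rightarrow \text{\cref{SC-item3}}$, whose only substantive step is the last implication, and you correctly isolate where the Sperner hypothesis enters---without it the implication fails (e.g., the hypergraph with hyperedges $\{1,2\}$ and $\{1,2,3\}$ is conformal but is not the clique hypergraph of its co-occurrence graph, since $\{1,2\}$ is not a maximal clique there). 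One small remark: for the converse half of your last step, a maximal clique $K$ of $G(\cH)$ may be a singleton (an isolated vertex), in which case the pairwise-co-occurrence hypothesis is vacuous; this is still covered either by reading the conformality condition as applying vacuously to singletons or by the paper's convention that every vertex of a hypergraph lies in some hyperedge, so it is not a gap, merely worth a word. A minor alternative shortcut: since every edge of a graph lies in a maximal clique, $\cH = \C(G)$ forces $G = G(\cH)$ (up to the vertex-set convention), so \cref{SC-item2} implies \cref{SC-item3} directly, but your route through conformality is equally clean.
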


\medskip
\noindent{\bf Subtransversals.}
Given a hypergraph $\cH = (V,E)$, a set $S\subseteq V$ is a \emph{subtransversal} of $\cH$ if $S$ is a subset of a minimal transversal.
The following characterization of subtransversals due to Boros, Gurvich, and Hammer~\cite[Theorem 1]{MR1754735} was formulated first in terms of prime implicants of monotone Boolean functions and their duals, and reproved in terms of hypergraphs in~\cite{MR1841121}.
Given a set $S\subseteq V$ and a vertex $v\in S$, we denote by $E_v(S)$ the set of hyperedges $e\in E$ such that $e\cap S = \{v\}$.

\begin{theorem}[Subtransversal criterion. Boros, Gurvich, Elbassioni, and Khachiyan~\cite{MR1841121}; see also Chapter 10 in Crama and Hammer~\cite{zbMATH05852793}]\label{subtransversal-characterization}
Let $\mathcal{H} = (V,E)$ be a hypergraph and let $S\subseteq V$.
Then $S$ is a subtransversal of $\cH$ if and only if there exists a collection of hyperedges
$\{e_v\in E_v(S) : v\in S\}$ such that the set $(\bigcup_{v\in S}e_v)\setminus S$
does not contain any hyperedge of~$\cH$.
\end{theorem}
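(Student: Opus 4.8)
The plan is to prove both implications by exploiting two elementary facts about transversals, which I would record first. Fact one: a set $C\subseteq V$ is a transversal of $\cH$ precisely when its complement $V\setminus C$ contains no hyperedge; equivalently, $U\subseteq V$ contains no hyperedge if and only if $V\setminus U$ meets every hyperedge. Fact two is the standard criticality description of minimal transversals: a transversal $T$ is minimal if and only if every $v\in T$ has a \emph{private hyperedge}, i.e.\ a hyperedge $e$ with $e\cap T=\{v\}$. This second fact is what links minimality to the defining condition $e\cap S=\{v\}$ of the sets $E_v(S)$, and it is elementary enough that I would simply verify it at the outset. Throughout I abbreviate $U=(\bigcup_{v\in S}e_v)\setminus S$.

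For the forward implication, suppose $S$ extends to a minimal transversal $T$. By the criticality description, each $v\in T$ — in particular each $v\in S$ — admits a private hyperedge $e_v$ with $e_v\cap T=\{v\}$. Since $S\subseteq T$, this forces $e_v\cap S=\{v\}$, so $e_v\in E_v(S)$ and the required collection exists. It then remains to check that $U$ contains no hyperedge, and the key observation is that $U$ is disjoint from $T$: any $w\in U$ lies in some $e_v$ with $w\notin S$, and $e_v\cap T=\{v\}\subseteq S$ together with $w\notin S$ gives $w\notin T$. Consequently a hyperedge contained in $U$ would be disjoint from $T$, contradicting that $T$ is a transversal.

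For the converse, assume we are given such a collection with $U$ containing no hyperedge. Then $C:=V\setminus U$ is a transversal by Fact one, and $S\subseteq C$ since $U$ avoids $S$. I would then delete vertices of $C\setminus S$ greedily, maintaining the transversal property, until no vertex of the current set outside $S$ can be removed; call the result $T$, so that $S\subseteq T\subseteq C$. By construction every $w\in T\setminus S$ has a private hyperedge, so to conclude that $T$ is a genuine minimal transversal it remains only to produce a private hyperedge for each $v\in S$. Here I claim $e_v$ itself works: we have $v\in e_v\cap T$, and any other $w\in e_v\cap T$ would satisfy $w\notin S$ (as $e_v\cap S=\{v\}$) and hence $w\in U$, contradicting $T\subseteq C=V\setminus U$. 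Thus $e_v\cap T=\{v\}$, every vertex of $T$ is critical, and $T$ is a minimal transversal containing $S$. The one point demanding care — and the main obstacle — is exactly this last verification: the greedy shrinking must be forbidden from touching $S$, and one must ensure that the prescribed witnesses $e_v$ remain private after the shrinking, which is precisely what the containment $T\subseteq V\setminus U$ guarantees.
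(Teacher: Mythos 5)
Your proof is correct. Note that the paper itself contains no proof of this statement: it is quoted as a known result, with the proofs residing in the cited references (Boros--Gurvich--Hammer, Boros--Gurvich--Elbassioni--Khachiyan, and Crama--Hammer), so there is no in-paper argument to compare against. Your argument is the standard one and is sound at every step: both directions rest on the criticality characterization of minimal transversals (a transversal $T$ is minimal if and only if each $v\in T$ has a hyperedge $e$ with $e\cap T=\{v\}$). In the forward direction, the key point that $U=(\bigcup_{v\in S}e_v)\setminus S$ is disjoint from $T$ is verified correctly, so a hyperedge inside $U$ would be missed by $T$. In the converse, taking $C=V\setminus U$ as a transversal containing $S$ and greedily deleting only vertices of $C\setminus S$ correctly produces a set $T$ in which every vertex of $T\setminus S$ is critical by construction, while the containment $T\subseteq V\setminus U$ guarantees that each prescribed $e_v$ remains private for $v\in S$; hence $T$ is a minimal transversal containing $S$. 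This is precisely the mechanism used in the cited sources, so your blind reconstruction recovers the intended proof.
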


We will also need the algorithmic version of the result.
We assume that a given hypergraph is represented with an edge-vertex incidence matrix and a doubly-linked representation of its incident pairs (see~\cite{boros2023dually} for a detailed description).

\begin{sloppypar}
\begin{corollary}[Boros, Gurvich, Milani{\v{c}}, and  Uno~\cite{boros2023dually}]\label{subtransversal-running-time}
Let $\mathcal{H} = (V,E)$ be a hypergraph with dimension $k$ and maximum degree~$\gD$, given by an edge-vertex incidence matrix and a doubly-linked representation of its incident pairs, and let $S\subseteq V$.
Then, there exists an algorithm running in time 
\[
\mathcal{O}\left(k|E|\cdot\min\left\{\gD^{|S|}\,,\left(\frac{|E|}{|S|}\right)^{|S|}\right\}\right)
\]
that determines if $S$ is a subtransversal of~$\cH$.
In particular, if $|S| = \mathcal{O}(1)$, the complexity is $\mathcal{O}(k|E|\Delta^{|S|})$.
\end{corollary}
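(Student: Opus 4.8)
The plan is to convert the Subtransversal Criterion (\Cref{subtransversal-characterization}) directly into a brute-force search and then bound both the number of candidates inspected and the cost of inspecting each one. By the criterion, $S$ is a subtransversal of $\cH$ if and only if there is a choice of hyperedges $e_v\in E_v(S)$, one for each $v\in S$, such that $(\bigcup_{v\in S}e_v)\setminus S$ contains no hyperedge of $\cH$. This reduces the decision to enumerating all such tuples $(e_v)_{v\in S}$ and testing the containment condition for each.

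First I would preprocess by computing, for every $v\in S$, the family $E_v(S)$ of hyperedges meeting $S$ exactly in $\{v\}$. Scanning each hyperedge once and recording its intersection with $S$ via the characteristic vector of $S$ classifies all hyperedges in total time $\mathcal{O}(k|E|)$: a hyperedge lands in some $E_v(S)$ precisely when this intersection is the singleton $\{v\}$, and the doubly-linked incidence representation lets one traverse the incident pairs within this budget. If some $E_v(S)$ is empty, no admissible collection exists, so the algorithm reports that $S$ is not a subtransversal; this is consistent with the criterion, since each $v$ in a minimal transversal containing $S$ requires a private hyperedge lying in $E_v(S)$.

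Next I would enumerate the $\prod_{v\in S}|E_v(S)|$ tuples. For a fixed tuple, I would form $U=(\bigcup_{v}e_v)\setminus S$ by marking its at most $k|S|$ elements in a boolean array, and then test whether any hyperedge $f\in E$ satisfies $f\subseteq U$, checking each $f$ in $\mathcal{O}(k)$ time for a total of $\mathcal{O}(k|E|)$; afterwards the array is cleared by unmarking only the elements that were set. The algorithm answers ``yes'' as soon as some tuple yields a $U$ containing no hyperedge, and ``no'' if every tuple fails. Note that once all $E_v(S)$ are nonempty we have $|S|\le\sum_{v}|E_v(S)|\le|E|$, so the $\mathcal{O}(k|S|)$ cost of forming $U$ is absorbed into $\mathcal{O}(k|E|)$, giving a per-tuple cost of $\mathcal{O}(k|E|)$.

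The crux is bounding the number of tuples. The families $E_v(S)$ are pairwise disjoint subfamilies of $E$, since a hyperedge whose intersection with $S$ is a singleton determines that singleton uniquely; hence $\sum_{v\in S}|E_v(S)|\le|E|$, while also $|E_v(S)|\le\deg(v)\le\gD$ for each $v$. The first bound gives $\prod_{v}|E_v(S)|\le(|E|/|S|)^{|S|}$ by the AM--GM inequality applied to the $|S|$ quantities $|E_v(S)|$, and the second gives $\prod_{v}|E_v(S)|\le\gD^{|S|}$ trivially. Taking the minimum and multiplying by the $\mathcal{O}(k|E|)$ per-tuple cost yields the stated running time, and the $|S|=\mathcal{O}(1)$ refinement is immediate. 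I expect the only real obstacle to be bookkeeping: verifying that classifying hyperedges, forming $U$, and testing hyperedge containment each stay within the $\mathcal{O}(k|E|)$ budget under the assumed incidence representation, rather than any conceptual difficulty.
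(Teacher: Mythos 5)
Your proof is correct and follows what is essentially the intended argument: the paper itself only cites this result from~\cite{boros2023dually}, and the algorithm there is exactly this brute-force enumeration of the tuples $(e_v)_{v\in S}$ guaranteed by \Cref{subtransversal-characterization}, with the per-tuple $\mathcal{O}(k|E|)$ containment test and the two tuple-count bounds ($\gD^{|S|}$ from degrees, $(|E|/|S|)^{|S|}$ from disjointness of the families $E_v(S)$ plus AM--GM). Your observations that emptiness of some $E_v(S)$ permits early rejection and that $|S|\le|E|$ in the enumeration phase (so forming $U$ stays within budget) are precisely the bookkeeping needed to make the bound go through.
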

\end{sloppypar}

\section{The clique-dual of a graph}\label{sec:clique-dual}

In this section we introduce the clique-dual graph of a graph, relate this transformation to CDC graphs, illustrate it with several examples, and discuss graphs $G$ such that the graph, its clique-dual, and its complement are all isomorphic to each other, as well as graphs for which their clique-dual is either the graph itself or its complement.

\subsection{Definition, basic properties, and examples}

We denote by $\C^d(G)$ the dual hypergraph of $\C(G)$; the hyperedges of $\C^d(G)$ are precisely the minimal clique transversals of~$G$.
Furthermore, we denote by $G^c$ the \emph{clique-dual} of $G$, that is, the graph with vertex set $V(G)$, in which two distinct vertices are adjacent if and only if they belong to the same hyperedge of $\C^d(G)$ (see \Cref{fig:cdc} for an example).
In words, $G^c$ is the co-occurrence graph of the hypergraph of minimal clique transversals of~$G$. Note that $V(G^c) = V(G)$ and two distinct vertices in $V(G)$ are adjacent in $G^c$ if and only if they belong to a common minimal clique transversal of~$G$.
Recall that a graph $G$ is clique dually conformal (CDC for short) if its clique hypergraph is dually conformal, that is, if $\C(G^c) = \C^d(G)$.

\begin{figure}[h!]
	\centering
	\includegraphics[width=0.55\textwidth]{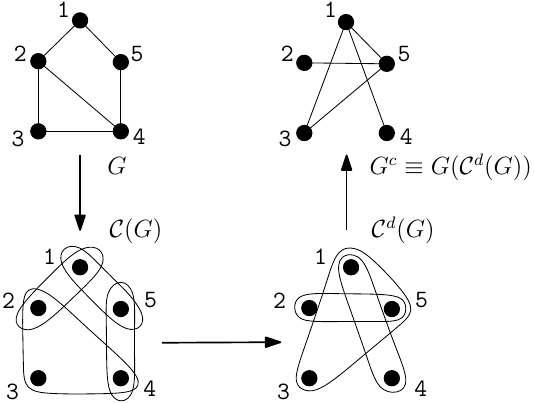}
	\caption{The clique-dual of a graph.}
	\label{fig:cdc}
\end{figure}

\medskip

\begin{observation}\label{obs:CDC-via-Gc}
For every graph $G$, the following two conditions are equivalent.
\begin{enumerate}
\item $G$ is CDC.
\item The maximal cliques of $G^c$ are exactly the minimal clique transversals of~$G$. 
\end{enumerate}
\end{observation}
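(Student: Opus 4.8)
The plan is to reduce the statement to the characterization of conformal Sperner hypergraphs recorded in \Cref{Sperner-conformal}. By definition, $G$ is CDC exactly when its clique hypergraph $\C(G)$ is dually conformal, that is, when the dual hypergraph $\C^d(G)$ is conformal; so the entire content of the observation is to show that conformality of $\C^d(G)$ is equivalent to the equality between the maximal cliques of $G^c$ and the minimal clique transversals of $G$. My first step is to record the structural fact that makes \Cref{Sperner-conformal} applicable: the dual of any hypergraph is Sperner, so $\C^d(G)$ is a Sperner hypergraph (its hyperedges, the minimal clique transversals of $G$, are pairwise incomparable by minimality).

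With $\C^d(G)$ known to be Sperner, I would invoke the equivalence between \cref{SC-item1,SC-item3} of \Cref{Sperner-conformal}: a Sperner hypergraph is conformal if and only if it coincides with the clique hypergraph of its own co-occurrence graph. The second step is the purely definitional identification of that co-occurrence graph. By construction, $G(\C^d(G))$ has vertex set $V(G)$ and joins two distinct vertices precisely when they lie together in some hyperedge of $\C^d(G)$, i.e.\ in a common minimal clique transversal of $G$; this is exactly the adjacency rule defining the clique-dual $G^c$. Hence $G(\C^d(G)) = G^c$, and \Cref{Sperner-conformal} gives that $\C^d(G)$ is conformal if and only if $\C^d(G) = \C(G^c)$.

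The final step is to translate this equality into the language of the statement. The hyperedges of $\C^d(G)$ are the minimal clique transversals of $G$, while the hyperedges of $\C(G^c)$ are the maximal cliques of $G^c$, so $\C^d(G) = \C(G^c)$ says exactly that the maximal cliques of $G^c$ are precisely the minimal clique transversals of $G$, which is condition~(2). Chaining the three equivalences, namely $G$ CDC $\iff$ $\C^d(G)$ conformal $\iff$ $\C^d(G) = \C(G^c)$ $\iff$ condition~(2), completes the argument.

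I do not expect a genuine obstacle: once \Cref{Sperner-conformal} is identified as the right tool, the proof is a matter of matching definitions. The only points needing a moment's care are checking that $\C^d(G)$ is a bona fide Sperner hypergraph and that $V(G^c) = V(\C^d(G))$, i.e.\ that every vertex of $G$ lies in some minimal clique transversal. The latter follows from the subtransversal criterion (\Cref{subtransversal-characterization}): for any vertex $v$, picking a maximal clique $C \ni v$, the set $C \setminus \{v\}$ contains no maximal clique of $G$ (any such clique together with $v$ would be a strictly larger clique inside $C$), which certifies that $\{v\}$ is a subtransversal and hence extends to a minimal transversal containing $v$.
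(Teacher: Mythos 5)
Your proof is correct and follows exactly the route the paper intends: the paper states this observation without proof, treating it as immediate from its restatement of the CDC property as $\C(G^c) = \C^d(G)$, which is justified precisely by your chain (dual hypergraphs are Sperner, \Cref{Sperner-conformal} items 1 and 3, and the definitional identity $G(\C^d(G)) = G^c$). Your extra care in verifying that every vertex lies in some minimal clique transversal (so the vertex sets match) is a legitimate subtlety the paper glosses over, and your subtransversal argument for it is sound.
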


The importance of the clique-dual operation for the study of CDC graphs follows from the fact that the class of CDC graphs is closed under taking the clique-dual.

\begin{proposition}\label{prop:CDC-implies-2-cycle}
Let $G$ be a CDC graph.
Then the clique-dual $G^c$ is also a CDC graph. 
Furthermore, $G^{cc} = G$.
\end{proposition}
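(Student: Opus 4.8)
The plan is to chase the defining identity of CDC graphs through the duality involution for Sperner hypergraphs; both conclusions will fall out of a single computation. Since $G$ is CDC, \Cref{obs:CDC-via-Gc} (equivalently, the definition) gives the key identity $\C(G^c) = \C^d(G)$, and everything is driven by this.

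First I would compute $\C^d(G^c)$. By definition $\C^d(G^c) = (\C(G^c))^d$, and substituting the key identity gives $(\C(G^c))^d = (\C^d(G))^d = \C^{dd}(G)$. Now the hyperedges of $\C(G)$ are the maximal cliques of $G$, so $\C(G)$ is Sperner, and the involution $\cH^{dd}=\cH$ on Sperner hypergraphs recalled in the introduction applies to yield $\C^{dd}(G)=\C(G)$. Hence $\C^d(G^c)=\C(G)$. From here the first conclusion is immediate: $\C^d(G^c)=\C(G)$ is a clique hypergraph and is therefore conformal (by \Cref{Sperner-conformal}, or by definition), so $G^c$ is CDC.

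For the equality $G^{cc}=G$ I would unwind the definition of the clique-dual at the level of graphs rather than hypergraphs. By definition $G^{cc}=(G^c)^c$ is the co-occurrence graph of $\C^d(G^c)$, and we have just shown $\C^d(G^c)=\C(G)$. So it remains to observe that the co-occurrence graph of $\C(G)$ is $G$ itself: two distinct vertices lie in a common maximal clique of $G$ exactly when they are adjacent in $G$. This gives $G^{cc}=G$.

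The computation is short, so I do not expect a substantial obstacle; the only genuine points of care are bookkeeping. First, I must verify the Sperner hypotheses wherever the involution $\cH^{dd}=\cH$ is invoked — here $\C(G)$, $\C^d(G)$, and $\C(G^c)$ are all Sperner, the first because maximal cliques form a Sperner family and the others because duals and clique hypergraphs are always Sperner. Second, I must keep the two conclusions distinct: the first rests on the hypergraph-level identity $\C^d(G^c)=\C(G)$ together with conformality of clique hypergraphs, whereas the second upgrades this to the graph-level identity $G^{cc}=G$ and needs the elementary co-occurrence observation rather than any theorem.
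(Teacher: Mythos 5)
Your proof is correct and follows essentially the same route as the paper: starting from the identity $\C(G^c) = \C^d(G)$, dualizing via the Sperner involution to obtain $\C^d(G^c) = \C(G)$, and reading off both conclusions (conformality of $\C^d(G^c)$ for the CDC property of $G^c$, and equality of co-occurrence graphs for $G^{cc}=G$). Your explicit checks of the Sperner hypotheses and of the fact that the co-occurrence graph of $\C(G)$ is $G$ itself are details the paper leaves implicit, but the argument is identical in substance.
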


\begin{proof}
Since $G$ is a CDC graph, the clique hypergraph $\C(G)$ is dually conformal, implying, by \Cref{Sperner-conformal}, that $\C^d(G) = \C(G^c)$. 
Since the clique hypergraphs are Sperner, the above equation and the fact that $\mathcal{H}^{dd} = \mathcal{H}$ for every Sperner hypergraph $\mathcal{H}$
(see, e.g.,~\cite{zbMATH03400923,zbMATH01859168,zbMATH05852793}), imply that $\C(G) = (\C^d(G))^{d} = (\C(G^c))^{d} = \C^d(G^c)$.
Hence, the dual of the hypergraph $\C(G^c)$ is conformal, showing that $G^c$ is a CDC graph.
The above equation implies that the minimal clique transversals of $G^c$ are exactly the maximal cliques of~$G$.
Consequently, we have $G^{cc} = G$.
\end{proof}

However, there exist (non-CDC) graphs $G$ with $G^{cc}\neq G$.
For example, if $G$ is the $5$-cycle, then $G^c$ and $G^{cc}$ are the complete graph and the edgeless graph on $5$ vertices, respectively (we refer to \Cref{ex:C5} for more details).

The next example shows that there exist graphs $G$ whose clique-dual is not CDC and that we may have $G^{cc} = G$ even if $G$ is not CDC.

\begin{example}\label{example:GccGnotCDC}
Let $G$ be the $9$-vertex graph depicted in \Cref{fig:cdc-counterexample}.

\begin{figure}[h!]
	\centering
	\includegraphics[width=0.8\textwidth]{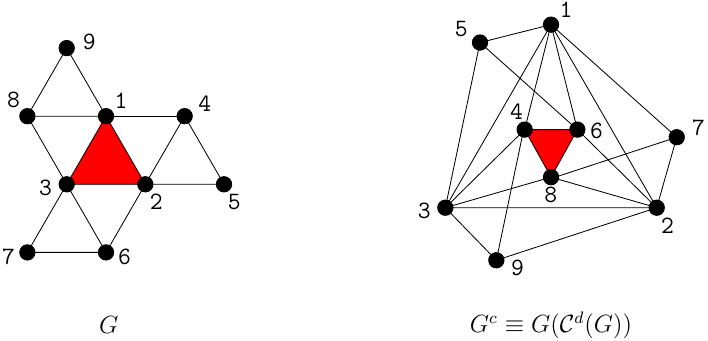}
	\caption{Two non-CDC graphs that are clique-duals of each other. 
 In each of the two graphs, every maximal clique except the shaded one is a minimal clique transversal of the other graph.}
	\label{fig:cdc-counterexample}
\end{figure}

The clique hypergraph of $G$ consists of the following $7$ hyperedges: 
\[E(\C(G)) = \big\{\{{\tt 1},{\tt 2},{\tt 3}\}, \{{\tt 1},{\tt 2},{\tt 4}\}, \{{\tt 1},{\tt 3},{\tt 8}\}, \{{\tt 1},{\tt 8},{\tt 9}\}, \{{\tt 2},{\tt 3},6\}, \{{\tt 2},{\tt 4},{\tt 5}\}, \{{\tt 3},{\tt 6},{\tt 7}\}\big\}\,.\]
Its dual consists of the following $13$ hyperedges:
\begin{align*}
    E(\C^d(G)) & =\big\{\{{\tt 1},{\tt 2},{\tt 3}\},\{{\tt 1},{\tt 2},{\tt 6}\},\{{\tt 1},{\tt 2},{\tt 7}\},\{{\tt 1},{\tt 3},{\tt 4}\},\{{\tt 1},{\tt 3},{\tt 5}\},\{{\tt 1},{\tt 4},{\tt 6}\},\{{\tt 1},{\tt 5},{\tt 6}\},\\
    & ~~~~~\!~\{{\tt 2},{\tt 3},{\tt 8}\},\{{\tt 2},{\tt 3},{\tt 9}\},\{{\tt 2},{\tt 6},{\tt 8}\},\{{\tt 2},{\tt 7},{\tt 8}\},\{{\tt 3},{\tt 4},{\tt 8}\},\{{\tt 3},{\tt 4},{\tt 9}\}\big\}\,.
\end{align*}
The co-occurrence graphs of these two hypergraphs are the graphs $G$ and $G^c$ depicted in \Cref{fig:cdc-counterexample}.
Note, however, that the set $\{{\tt 4},{\tt 6},{\tt 8}\}$ is a maximal clique of $G^c$ that is not a hyperedge of $\C^d(G)$.
This means that the hypergraph $\C^d(G)$ is not conformal, or, equivalently, $\C(G)$ is not dually conformal.
Hence, $G$ is not a CDC graph.

Repeating the procedure starting with $G^c$ instead of $G$, we obtain that the clique hypergraph of $G^c$ consists 
of $14$ hyperedges,
\[E(\C(G^c)) =   E(\C^d(G)) \cup \big\{\{{\tt 4},{\tt 6},{\tt 8}\}\big\}\,.\]
Its dual hypergraph consists of $6$ hyperedges: 
\begin{align*}
   E(\C^d(G^c)) & =\big\{\{{\tt 1},{\tt 2},{\tt 4}\}, \{{\tt 1},{\tt 3},{\tt 8}\}, \{{\tt 1},{\tt 8},{\tt 9}\}, \{{\tt 2},{\tt 3},{\tt 6}\}, \{{\tt 2},{\tt 4},{\tt 5}\}, \{{\tt 3},{\tt 6},{\tt 7}\}\big\}\\
    & = E(\C(G)) \setminus\big\{\{{\tt 1},{\tt 2},{\tt 3}\}\big\}\,.
\end{align*}
In this particular case we have $G^{cc} = G$.
However, the set $\{{\tt 1},{\tt 2},{\tt 3}\}$ is a maximal clique of $G$ that is not a hyperedge of $\C^d(G^c)$.
Similarly as before, this means that $G^c$ is not a CDC graph.
\hfill$\blacktriangle$
\end{example}

There exist graphs $G$ such that the graph $G$, its clique-dual $G^c$, and its complement $\overline{G}$ are all isomorphic to each other.
A computer search revealed that up to $9$ vertices, there exist only three graphs with this property: the one-vertex graph $K_1$, the $4$-vertex path, and a $9$-vertex graph described in the next example.

\begin{example}\label{ex:cdc}
Let $G$ be the $9$-vertex graph shown in \Cref{fig:example}.

\begin{figure}[h!]
	\centering
\includegraphics[width=\textwidth]{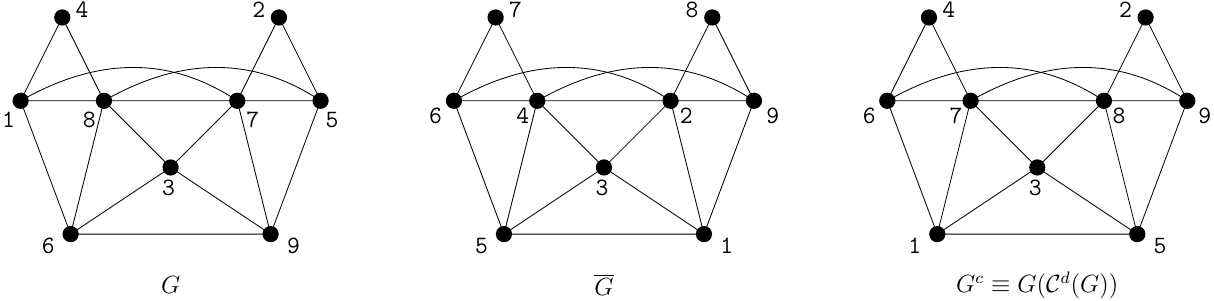}
 \caption{A graph $G$ isomorphic to its complement $\overline{G}$ and its clique-dual $G^c$.}
	\label{fig:example}
\end{figure}

The clique hypergraph of $G$ consists of the following $10$ hyperedges: 
\begin{align*}
    E(\C(G)) & =\big\{\{{\tt 1},{\tt 4},{\tt 8}\}, \{1,{\tt 6},{\tt 8}\}, \{{\tt 1},{\tt 7},{\tt 8}\}, \{{\tt 2},{\tt 5},{\tt 7}\}, \{{\tt 3},{\tt 6},{\tt 8}\}, \{{\tt 3},{\tt 6},{\tt 9}\}, \{{\tt 3},{\tt 7},{\tt 8}\}, \{{\tt 3},{\tt 7},{\tt 9}\},\\
    & ~~~~~\!~\{{\tt 5},{\tt 7},{\tt 8}\}, \{{\tt 5},{\tt 7},{\tt 9}\}\big\}\,.
\end{align*}
Its dual consists of the following $10$ hyperedges:
\begin{align*}E(\C^d(G)) & =\big\{\{{\tt 1},{\tt 3},{\tt 5}\},\{1,{\tt 3},{\tt 7}\},\{1,{\tt 6},{\tt 7}\},\{{\tt 2},{\tt 8},{\tt 9}\},\{{\tt 3},{\tt 5},{\tt 8}\},\{{\tt 3},{\tt 7},{\tt 8}\},\{{\tt 4},{\tt 6},{\tt 7}\},\{{\tt 5},{\tt 8},{\tt 9}\},\\ 
&~~~~~\!~\{{\tt 6},{\tt 7},{\tt 8}\},\{{\tt 7},{\tt 8},{\tt 9}\}\big\}\,.
\end{align*}
The co-occurrence graphs of these two hypergraphs are the graphs $G$ and $G^c$ depicted, along with the complement of $G$, in \Cref{fig:example}.
Note also that the graph $G$ is CDC, since the hypergraph $\C^d(G)$ coincides with the clique hypergraph of $G^c$ and is therefore conformal.
\hfill$\blacktriangle$
\end{example}

\begin{observation}\label{obs:commuting}
Let $G$ be a graph such that $G^{cc} \cong G$ and $G^c\cong \overline{G}$.
Then $G$, $\overline{G}^c$, and $\overline{G^c}$ are all isomorphic to each other.
\end{observation}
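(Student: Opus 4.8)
The plan is to reduce the statement to two elementary invariance principles, combined with the two hypotheses $G^{cc}\cong G$ and $G^c\cong\overline{G}$. Since $\cong$ is symmetric and transitive, it suffices to establish the two relations $G\cong\overline{G}^c$ and $G\cong\overline{G^c}$; the remaining isomorphism $\overline{G}^c\cong\overline{G^c}$ then follows at once by transitivity.

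The first step I would take is to record that both the complementation operator and the clique-dual operator commute with graph isomorphism. For complementation this is immediate: a bijection $\phi\colon V(A)\to V(B)$ preserves adjacency if and only if it preserves non-adjacency, so any isomorphism $A\to B$ is simultaneously an isomorphism $\overline{A}\to\overline{B}$; hence $A\cong B$ implies $\overline{A}\cong\overline{B}$, and in particular complementation is an involution, $\overline{\overline{A}}=A$. For the clique-dual the argument is only slightly longer: an isomorphism $\phi\colon A\to B$ carries maximal cliques of $A$ bijectively to maximal cliques of $B$, and therefore minimal transversals of the maximal cliques of $A$ to minimal transversals of those of $B$; consequently $\phi$ preserves the co-occurrence relation defining the clique-dual, so $\phi$ is also an isomorphism $A^c\to B^c$, giving $A\cong B\implies A^c\cong B^c$.

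With these two principles available, the required isomorphisms drop out by applying one operator to each side of the hypothesis $G^c\cong\overline{G}$. Applying complementation and using the involution property yields $\overline{G^c}\cong\overline{\overline{G}}=G$, which is the second relation. Applying the clique-dual operator to the (symmetric) form $\overline{G}\cong G^c$ yields $\overline{G}^c\cong(G^c)^c=G^{cc}$, and invoking the hypothesis $G^{cc}\cong G$ gives $\overline{G}^c\cong G$, the first relation. Chaining these through $G$ and using transitivity of $\cong$ then shows that $G$, $\overline{G}^c$, and $\overline{G^c}$ are pairwise isomorphic.

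Because the whole argument is a short chain of such invariance observations, I do not anticipate a genuine obstacle. The only point deserving a little care is the verification that the clique-dual operator is isomorphism-invariant; this rests on the fact that maximal cliques and their minimal transversals are defined purely in terms of the abstract adjacency structure of the graph and are hence preserved by any isomorphism, exactly as used above.
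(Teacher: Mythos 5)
Your proof is correct and follows essentially the same route as the paper's: both arguments apply the clique-dual operator to $G^c\cong\overline{G}$ to get $\overline{G}^c\cong G^{cc}\cong G$, and apply complementation to the same hypothesis to get $\overline{G^c}\cong\overline{\overline{G}}=G$. The only difference is that you spell out the isomorphism-invariance of the two operators explicitly, which the paper leaves implicit.
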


\begin{proof}
Since the graphs $G^c$ and $\overline{G}$ are isomorphic to each other, so are their clique-duals.
Thus, $G\cong G^{cc}\cong \overline{G}^c$.
Similarly, applying complementation, the isomorphism relation $G^c\cong \overline{G}$ implies that $\overline{G^c}\cong \overline{\overline{G}} = G$.
\end{proof}

\Cref{prop:CDC-implies-2-cycle} and \Cref{obs:commuting} imply the following.

\begin{corollary}
If $G$ is CDC and $G^c\cong \overline{G}$, then $G$, $\overline{G}^c$, and $\overline{G^c}$ are all isomorphic to each other.
Furthermore, all graphs in the quintuple 
($G$, $G^c$, $\overline{G}$, $\overline{G}^c$, $\overline{G^c}$) are CDC. 
\end{corollary}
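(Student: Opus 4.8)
The plan is to deduce the corollary directly from the two results cited immediately before it, namely \Cref{prop:CDC-implies-2-cycle} and \Cref{obs:commuting}, together with the observation that the CDC property is invariant under graph isomorphism. First I would record this invariance explicitly: since the clique-dual construction is defined purely in terms of maximal cliques and minimal clique transversals, which are preserved by any graph isomorphism, isomorphic graphs have isomorphic clique-duals, and in particular one is CDC if and only if the other is. This small remark is what lets me transfer the CDC property across the various isomorphisms produced below.

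For the first assertion, I would argue that the hypotheses of \Cref{obs:commuting} are met. By \Cref{prop:CDC-implies-2-cycle}, $G$ being CDC gives $G^{cc}=G$, hence certainly $G^{cc}\cong G$; combined with the assumed $G^c\cong\overline{G}$, both hypotheses of \Cref{obs:commuting} hold, so its conclusion yields that $G$, $\overline{G}^c$, and $\overline{G^c}$ are all isomorphic. That settles the ``isomorphic to each other'' part of the statement verbatim.

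For the second assertion, that every member of the quintuple $(G,G^c,\overline{G},\overline{G}^c,\overline{G^c})$ is CDC, I would chase the isomorphisms. We know $G$ is CDC by hypothesis, and $G^c$ is CDC by \Cref{prop:CDC-implies-2-cycle}. Since $G^c\cong\overline{G}$ and the CDC property is isomorphism-invariant, $\overline{G}$ is CDC as well. Finally, by \Cref{obs:commuting} both $\overline{G}^c$ and $\overline{G^c}$ are isomorphic to $G$, which is CDC, so again by invariance they too are CDC. This exhausts all five graphs.

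I do not anticipate a genuine obstacle here, as the corollary is essentially a bookkeeping consequence of the preceding results; the only point requiring care is to make the isomorphism-invariance of the CDC property explicit, since \Cref{prop:CDC-implies-2-cycle} establishes CDC only for $G^c$ itself (equality, not mere isomorphism), whereas the conclusions transported from $G$ to $\overline{G}$, $\overline{G}^c$, and $\overline{G^c}$ come through isomorphisms and therefore rely on that invariance. I would state this invariance as a one-line remark at the outset so that each subsequent step is a clean application of it.
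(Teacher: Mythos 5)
Your proposal is correct and follows essentially the same route as the paper, which derives the corollary directly from \Cref{prop:CDC-implies-2-cycle} and \Cref{obs:commuting} without further argument. Your explicit remark that the CDC property is isomorphism-invariant is a worthwhile addition, since the paper uses this fact tacitly when transferring CDC from $G^c$ to $\overline{G}$ and from $G$ to $\overline{G}^c$ and $\overline{G^c}$.
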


We conclude this subsection with a characterization of pairs of graphs that are clique-duals of each other.
An \emph{edge clique cover} of a graph $G$ is a set of cliques of $G$ covering all edges of~$G$. 

\begin{proposition}
Two graphs with the same vertex set are clique-duals of each other if and only if for each of the two graphs, the family of its minimal clique transversals forms an edge clique cover of the other graph.
\end{proposition}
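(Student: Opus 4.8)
The plan is to reduce the biconditional to a single clean equivalence applied twice, namely: for any two graphs $A$ and $B$ on the same vertex set, the family $E(\C^d(A))$ of minimal clique transversals of $A$ forms an edge clique cover of $B$ if and only if $B = A^c$. Granting this equivalence, the proposition follows at once, since ``$G$ and $H$ are clique-duals of each other'' means precisely $H = G^c$ and $G = H^c$, and the two edge-clique-cover conditions in the statement are exactly the two instances of the equivalence with $(A,B) = (G,H)$ and $(A,B) = (H,G)$.

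To prove the equivalence, I would unpack the two defining requirements of an edge clique cover and match each to a set inclusion between $E(B)$ and $E(A^c)$, recalling that $A^c$ is by definition the co-occurrence graph of $\C^d(A)$: an unordered pair $\{u,v\}$ of distinct vertices is an edge of $A^c$ if and only if some minimal clique transversal of $A$ contains both $u$ and $v$. The first requirement is that every member $T \in E(\C^d(A))$ be a clique of $B$; I would show this holds if and only if $E(A^c) \subseteq E(B)$. Indeed, if every such $T$ is a clique of $B$, then any pair co-occurring in some transversal is a $B$-edge, so $E(A^c) \subseteq E(B)$; conversely, if $E(A^c) \subseteq E(B)$, then any two vertices of a transversal $T$ co-occur, hence form an $A^c$-edge and therefore a $B$-edge, so $T$ is a clique of $B$.

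The second requirement is that the transversals cover all edges of $B$; I would show this holds if and only if $E(B) \subseteq E(A^c)$. This direction is essentially a restatement of definitions: an edge $\{u,v\}$ of $B$ is covered by the family exactly when some minimal clique transversal of $A$ contains both $u$ and $v$, which is exactly the condition for $\{u,v\}$ to be an edge of $A^c$. Combining the two requirements, $E(\C^d(A))$ is an edge clique cover of $B$ if and only if $E(A^c) = E(B)$, and since $A^c$ and $B$ share the vertex set $V(A) = V(B)$, this is equivalent to $B = A^c$, completing the equivalence.

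There is no genuine obstacle here beyond careful bookkeeping; the one point requiring attention is the clean separation of an edge clique cover into its two constituent conditions---that the members are cliques of the target graph, and that together they cover its edges---together with the observation that these translate into the two opposite inclusions whose conjunction is graph equality. Degenerate transversals of size at most one contribute no edges to the co-occurrence graph and are automatically cliques, so they cause no difficulty in either direction.
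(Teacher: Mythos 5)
Your proposal is correct and takes essentially the same approach as the paper: both arguments unpack the definition of the co-occurrence graph $A^c$ and the two constituent conditions of an edge clique cover into the pair of inclusions $E(A^c)\subseteq E(B)$ and $E(B)\subseteq E(A^c)$, whose conjunction gives $B = A^c$. The only difference is organizational---you factor out the one-graph equivalence as a lemma applied twice, whereas the paper proves the two directions of the biconditional directly and invokes symmetry within each direction.
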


\begin{proof}
Let $G_1$ and $G_2$ be two graphs with the same vertex set.
Assume that $G_1$ and $G_2$ are clique-duals of each other.
By symmetry, it suffices to prove that the minimal clique transversals of $G_1$ form an edge clique cover of~$G_2$.
We have $G_2 = G_1^c$, that is, $G_2$ is the co-occurrence graph of the hypergraph of minimal clique transversals of~$G_1$.
Thus, every minimal clique transversal of $G_1$ is a clique in~$G_2$.
Furthermore, for every edge $uv$ of $G_2$ there exists a minimal clique transversal $T$ of $G_1$ such that $\{u,v\}\subseteq T$. 
It follows that the minimal clique transversals of $G_1$ form an edge clique cover of~$G_2$.

Assume now that for each of the two graphs, the family of its minimal clique transversals forms an edge clique cover of the other graph.
By symmetry, it suffices to prove that $G_2 = G_1^c$.
We have $V(G_1^c) = V(G_1) = V(G_2)$.
Consider two distinct vertices $u$ and $v$ in $V(G_2) = V(G_1^c)$. 
If $uv\in E(G_2)$, then there exists a minimal clique transversal $T$ of $G_1$ such that $\{u,v\}\subseteq T$ and consequently $uv\in E(G_1^c)$.
Thus, $E(G_2)\subseteq E(G_1^c)$.
Similarly, if $uv\in E(G_1^c)$, then there exists a minimal clique transversal $T$ of $G_1$ such that $\{u,v\}\subseteq T$.
The fact that the family of minimal clique transversals of $G_1$ forms an edge clique cover of $G_2$ implies that $T$ is a clique in~$G_2$.
Since $\{u,v\}\subseteq T$, we obtain that $u$ and $v$ are adjacent in~$G_2$.
We thus have $E(G_1^c)\subseteq E(G_2)$ and consequently $E(G_2)= E(G_1^c)$, that is, $G_2 = G_1^c$.
\end{proof}

\subsection{Graphs for which \texorpdfstring{$G^c$}{Gc} coincides with either $G$ or $\overline{G}$}\label{subsec:clique-dual}

We now show that the only graph that is equal to its clique-dual is $K_1$, and interpret this result in the language of hypergraphs.
Note that by $G = G^c$ we really mean equality of graphs; examples of equivalence up to isomorphism will be given in \Cref{sec:triangle-free-CDC} (see \Cref{triangle-free-with-bpm} in particular).

Boros et al.~\cite{boros2023dually} showed that complete graphs are the only graphs in which all minimal clique transversals have size one.
In fact, as we show next, the condition that all minimal clique transversals are themselves cliques is already sufficient to guarantee the same conclusion.
A \emph{universal vertex} in a graph $G$ is a vertex adjacent to all other vertices. 

\begin{lemma}\label{lem:cliques-ct-imply-G-complete}
Let $G$ be a graph in which all minimal clique transversals are cliques. Then $G$ is complete.
\end{lemma}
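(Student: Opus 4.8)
The plan is to prove the contrapositive: if $G$ is not complete, then $G$ has a minimal clique transversal that is not a clique. A warning guiding the whole argument is that one cannot simply take an arbitrary non-edge $\{u,v\}$ and hope it extends to a minimal clique transversal: in $P_4$ the two endpoints are non-adjacent but do \emph{not} lie in any common minimal clique transversal. The choice of pair must be made carefully.

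First I would dispose of the disconnected case. If $G$ is disconnected, every maximal clique lies in a single connected component, so any clique transversal must contain a vertex from each of the (at least two) components; such a set contains two vertices from distinct components and is therefore not a clique. Hence in the disconnected case \emph{every} minimal clique transversal already fails to be a clique. So assume $G$ is connected and not complete. Then $G$ has at least two maximal cliques, and two of them must intersect (an edge joining two disjoint maximal cliques would extend to a maximal clique meeting one of them). Among all pairs of distinct maximal cliques I would choose a pair $M_1,M_2$ maximizing $|M_1\cap M_2|$; write $C=M_1\cap M_2$, so $C\neq\emptyset$, and set $A_1=M_1\setminus M_2$ and $A_2=M_2\setminus M_1$, both nonempty since the clique hypergraph is Sperner. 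Because $M_1$ is maximal, not every vertex of $A_1$ is adjacent to every vertex of $A_2$ (else $M_1\cup M_2$ would be a clique properly containing $M_1$); hence I may pick non-adjacent $u\in A_1$ and $v\in A_2$.

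To produce the transversal I would apply the subtransversal criterion (\Cref{subtransversal-characterization}) to $S=\{u,v\}$, taking the two maximal cliques $M_1\in E_u(S)$ and $M_2\in E_v(S)$ (note $v\notin M_1$ and $u\notin M_2$). The point to verify is that $L:=(M_1\cup M_2)\setminus\{u,v\}=C\cup(A_1\setminus\{u\})\cup(A_2\setminus\{v\})$ contains no maximal clique. Suppose, for contradiction, that a maximal clique $Q\subseteq L$ exists. Since $u\in M_1$ and $v\in M_2$ but $u,v\notin L\supseteq Q$, the clique $Q$ is distinct from both $M_1$ and $M_2$, so the maximality of the chosen intersection gives $|Q\cap M_1|\le|C|$. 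As $Q$ is maximal and $u\notin Q$, some vertex of $Q$ is non-adjacent to $u$; because $u$ is adjacent to all of $M_1\setminus\{u\}=C\cup(A_1\setminus\{u\})$, that vertex must lie in $A_2\setminus\{v\}$, and symmetrically $Q$ meets $A_1\setminus\{u\}$. From $|Q\cap M_1|=|Q\cap C|+|Q\cap(A_1\setminus\{u\})|\le|C|$ together with $Q\cap(A_1\setminus\{u\})\neq\emptyset$ it follows that $Q$ omits some $c_0\in C$. But $c_0\in M_1\cap M_2$ is adjacent to every other vertex of $M_1\cup M_2$, hence to all of $Q$, so $Q\cup\{c_0\}$ is a clique properly containing $Q$, contradicting its maximality. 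Therefore $L$ is clique-free, $\{u,v\}$ is a subtransversal, and any minimal clique transversal containing $u$ and $v$ is not a clique, establishing the contrapositive.

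The main obstacle is precisely the selection of the pair of maximal cliques. For a naive choice the leftover set $L$ can accidentally contain a maximal clique (this already happens if one takes two maximal cliques sharing only a vertex), and then the criterion fails. Maximizing the overlap $|M_1\cap M_2|$ is exactly the extremal property that fuels the final ``$c_0$'' contradiction, and the only remaining points needing care are the disconnected reduction and the existence of two intersecting maximal cliques in the connected case, both of which are short.
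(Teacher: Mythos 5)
Your proof is correct, but it takes a genuinely different route from the paper's. The paper argues directly and much more economically: it picks a non-universal vertex $u$, observes that $S = V(G)\setminus N(u)$ (that is, $u$ together with all its non-neighbors) is a clique transversal --- any maximal clique avoiding all non-neighbors of $u$ must contain $u$ itself --- and then shows that any minimal clique transversal $T\subseteq S$ necessarily contains $u$ (since maximal cliques through $u$ meet $S$ only in $u$) as well as some non-neighbor $z$ of $u$ (a vertex of $T$ hitting a maximal clique through a fixed non-neighbor $w$ of $u$), so $T$ is not a clique. This needs no connectivity reduction, no extremal choice of cliques, and no appeal to the subtransversal criterion. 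Your argument instead fixes a non-adjacent pair $\{u,v\}$ coming from a pair of maximal cliques $M_1,M_2$ with $|M_1\cap M_2|$ maximum, and verifies via \Cref{subtransversal-characterization} that $(M_1\cup M_2)\setminus\{u,v\}$ contains no maximal clique; the extremal choice is exactly what makes the final ``omitted vertex $c_0\in C$'' contradiction work, and your $P_4$ warning correctly explains why some such care is unavoidable in this approach. What your route buys is a sharper structural conclusion: it exhibits an explicit non-edge of $G$ that lies in a common minimal clique transversal (i.e., an edge of $G^c$ that is a non-edge of $G$), in the spirit of how the paper later uses the subtransversal criterion (e.g., in \Cref{triangle-free-G^c}); what the paper's route buys is brevity and self-containedness, since it avoids invoking \Cref{subtransversal-characterization} altogether.
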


\begin{proof}
Let $G$ be a graph that is not complete.
Then, $G$ contains a vertex $u$ that is not universal. 
Let $S = V(G)\setminus N(u)$, that is, $S$ is the set consisting of $u$ and all non-neighbors of $u$ in~$G$.
Then $S$ intersects all maximal cliques in $G$ because any maximal clique in $G$ that does not contain any non-neighbor of $u$ must contain $u$; otherwise it would not be maximal.
Since $S$ is a clique transversal in $G$, there exists a minimal clique transversal $T\subseteq S$.
Any maximal clique $C$ containing $u$ does not contain any non-neighbors of $u$.
Hence, $S\cap C = \{u\}$ and consequently $T\cap C = \{u\}$; in particular, this shows that $u\in T$.
Fix a non-neighbor $w$ of $u$ and a maximal clique $D$ containing~$w$.
Then, the set $T\cap D$ is non-empty.
Let $z$ be a vertex in $T\cap D$.
Note that $u\not\in D$ and thus $z\neq u$.
Furthermore, since $T\cap N(u) = \emptyset$, we infer that $z$ is a non-neighbor of~$u$. 
Therefore, $T$ contains a pair of non-adjacent vertices $u$ and $z$, and hence is not a clique.
\end{proof}

Using \Cref{lem:cliques-ct-imply-G-complete}, it is now easy to derive the announced characterization of graphs for which $G$ and $G^c$ coincide.

\begin{theorem}\label{thm:self-dual-graphs}
The only graph $G$ such that $G^c = G$ is~$K_1$. 
\end{theorem}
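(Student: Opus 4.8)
The plan is to prove the statement by reducing it to \Cref{lem:cliques-ct-imply-G-complete}, which we are allowed to assume. The key observation is that the hypothesis $G^c = G$ forces every minimal clique transversal of $G$ to be a clique. Indeed, if $G^c = G$, then any two vertices lying together in a minimal clique transversal $T$ of $G$ are, by the very definition of the clique-dual, adjacent in $G^c$, and hence adjacent in $G$ since $G^c = G$. Thus every such $T$ induces pairwise adjacency among its vertices, i.e.\ $T$ is a clique of $G$. This verifies the hypothesis of \Cref{lem:cliques-ct-imply-G-complete}, and we conclude that $G$ is complete.

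It then remains to rule out complete graphs on more than one vertex. First I would note that if $G = K_n$ with $n \geq 2$, then $G$ has a single maximal clique, namely $V(G)$ itself, so the minimal clique transversals of $G$ are exactly the singletons $\{v\}$ for $v \in V(G)$. By the definition of the clique-dual, two distinct vertices $u, v$ are adjacent in $G^c$ if and only if they belong to a common minimal clique transversal; but every minimal clique transversal is a singleton, so no two distinct vertices share one. Hence $G^c$ is the edgeless graph on $n$ vertices, which is certainly not equal to $K_n$ when $n \geq 2$. This contradicts $G^c = G$, so $n = 1$ and $G = K_1$.

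Conversely, one should verify that $K_1$ does satisfy $G^c = G$: the single maximal clique is $\{v\}$, its unique minimal clique transversal is $\{v\}$, and $G^c$ has vertex set $\{v\}$ with no edges (there being no second vertex to be adjacent to), so $G^c = K_1 = G$. This completes the characterization.

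I do not expect any real obstacle here, since the heavy lifting is carried by \Cref{lem:cliques-ct-imply-G-complete}. The only point requiring care is the direction of the reduction: one must argue that $G^c = G$ \emph{implies} each minimal clique transversal is a clique (the easy direction), rather than conflating it with the converse. The subsequent elimination of $K_n$ for $n \geq 2$ is a direct computation from the definitions and needs only the elementary fact that a complete graph has a unique maximal clique.
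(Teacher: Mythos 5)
Your proof is correct and follows essentially the same route as the paper's: the paper also deduces from $G^c = G$ that every minimal clique transversal is a clique, applies \Cref{lem:cliques-ct-imply-G-complete} to conclude $G$ is complete, and then uses the fact that in a complete graph all minimal clique transversals are singletons (together with $K_1^c = K_1$) to finish. You have merely spelled out in full the steps the paper compresses into one sentence.
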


\begin{proof}
Immediate from the observation that $K_1^c = K_1$, \Cref{lem:cliques-ct-imply-G-complete}, and the fact that in any complete graph, all minimal clique transversals have size one.  
\end{proof}

We now prove an analogous result for hypergraphs.
Given a hypergraph  $\cH$, consider its co-occurrence graph  $G(\cH)$ and denote by $\cH^c$ the clique hypergraph of  $G(\cH)$. 
By definition, for any  $\cH$ its {\em conformalization} $\cH^c$ is Sperner, conformal, and has the same vertex set as $\cH$, $V(\cH^c) = V(\cH)$.  
Furthermore, $\cH^c = \cH$ if and only if  $\cH$  is Sperner and conformal. 
Note that in this case both operations $c$ and $d$  are involutions, that is, $\cH^{cc} = \cH^{dd} = \cH$.

\begin{lemma}\label{lem:HcEqualsHd}
Let $\cH$ be a hypergraph such that $\cH^c = \cH^d$.
Then $\cH$ consists of a single vertex and a single hyperedge of size one.
\end{lemma}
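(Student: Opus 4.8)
The plan is to reduce the statement to its graph-theoretic counterpart, \Cref{thm:self-dual-graphs}, by passing to the co-occurrence graph $G := G(\cH)$. Recall that $\cH^c$ is the clique hypergraph $\C(G)$ of this graph, so the hypothesis $\cH^c = \cH^d$ reads $\C(G) = \cH^d$; in words, the maximal cliques of $G$ are exactly the minimal transversals of $\cH$. Both sides are Sperner hypergraphs, so I may dualize the equation. Using the involutivity of the duality operator on Sperner hypergraphs, namely $\cH^{dd} = \cH$, the identity $\C(G) = \cH^d$ yields $\C^d(G) = \cH^{dd} = \cH$. Thus the hyperedges of $\cH$ are precisely the minimal clique transversals of $G$.

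With this in hand, the key observation is that $G$ is its own clique-dual. Indeed, by definition $G^c$ is the co-occurrence graph of the hypergraph $\C^d(G)$ of minimal clique transversals of $G$; since we have just shown $\C^d(G) = \cH$, we get $G^c = G(\C^d(G)) = G(\cH) = G$, that is, $G^c = G$. \Cref{thm:self-dual-graphs} then forces $G = K_1$, say $V(G) = \{v\}$, whence $\cH = \C^d(G) = \C^d(K_1) = \{\{v\}\}$, which is exactly a single vertex carrying a single hyperedge of size one. One can also bypass \Cref{thm:self-dual-graphs} and argue directly from \Cref{lem:cliques-ct-imply-G-complete}: every hyperedge $e \in \cH = \C^d(G)$ is a clique of $G = G(\cH)$, since all pairs of vertices of $e$ co-occur in $e$ itself; hence every minimal clique transversal of $G$ is a clique, so $G$ is complete, and for a complete graph $\C^d(G)$ is the family of singletons, whose co-occurrence graph is edgeless, forcing $G = G(\cH)$ to be both complete and edgeless, i.e.\ $K_1$.

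The step I expect to be delicate is the dualization $\C(G) = \cH^d \Rightarrow \C^d(G) = \cH$, because it relies on $\cH^{dd} = \cH$ and hence on $\cH$ being Sperner. This is precisely where the hypothesis must be handled with care: $\cH^d$ sees only the inclusion-minimal hyperedges of $\cH$, whereas $\cH^c = \C(G(\cH))$ is sensitive to \emph{all} hyperedges through the co-occurrence graph, so for a non-Sperner $\cH$ the two families can coincide without $\cH$ being trivial. I would therefore state and use the result in the Sperner setting, which is the setting of interest since clique hypergraphs, duals, and conformal hulls are all Sperner, and under which $\cH^{dd} = \cH$ is available. The rest is routine: by \Cref{Sperner-conformal} the hypergraph $\C(G)$ is the clique hypergraph of its own co-occurrence graph, so no information is lost in passing between $G$ and $\C(G)$, and the final identification $\C^d(K_1) = \{\{v\}\}$ is immediate.
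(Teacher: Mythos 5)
Your proof is correct (under the Sperner proviso you impose) and follows essentially the same route as the paper: pass to the co-occurrence graph $G = G(\cH)$, use the hypothesis together with dualization to identify $\cH$ with $\C^d(G)$, and conclude via \Cref{lem:cliques-ct-imply-G-complete}. Indeed, your ``alternative'' argument is literally the paper's proof, and your primary packaging via \Cref{thm:self-dual-graphs} (showing $G^c = G$) is only a cosmetic variant, since that theorem is itself an immediate corollary of \Cref{lem:cliques-ct-imply-G-complete}.

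One substantive remark: the delicate point you flag is real, and the paper does not address it. The paper's own proof invokes $(\cH^d)^d = \cH$ with no hypothesis on $\cH$, yet this identity requires $\cH$ to be Sperner, and the lemma as stated is in fact false for general hypergraphs in the paper's sense: take $V = \{1,2\}$ and $E(\cH) = \big\{\{1\},\{2\},\{1,2\}\big\}$. Then $G(\cH) = K_2$, so $\cH^c = \big\{\{1,2\}\big\}$, while every transversal of $\cH$ must contain both vertices, so $\cH^d = \big\{\{1,2\}\big\}$ as well; thus $\cH^c = \cH^d$ although $\cH$ is not a single vertex with a single hyperedge. Your fix---stating and proving the lemma for Sperner $\cH$---is exactly right and costs nothing for the paper, since both places where the lemma is used apply it to hypergraphs that arise as duals and are therefore Sperner: in \Cref{thm:self-dual-hypergraphs} one has $\cH = \cH^d$, and in \Cref{prop:edge-types-0-and-2} the lemma is applied to $\cH_{i+1} = \cH_i^d$.
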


\begin{proof}
Let $G$ be the co-occurrence graph of~$\cH$.
By definition, the conformalization $\cH^c$ is the clique hypergraph of~$G$.
Since $\cH^d$ is equal to $\cH^c$, it is conformal and therefore also equals to the clique hypergraph of~$G$.
Since $(\cH^d)^d = \cH$, the hypergraph $\cH$ is the dual of the clique hypergraph of $G$, that is, its hyperedges are precisely the minimal clique transversals of~$G$.

Consider an arbitrary minimal clique transversal $S$ of~$G$.
Then $S$ is a clique in $G$, since any two distinct vertices in $S$ are adjacent in the co-occurrence graph of $\cH^d$, which is~$G$.
We showed that every minimal clique transversal in $G$ is a clique.
Thus, by \Cref{lem:cliques-ct-imply-G-complete}, $G$ is complete.
It follows that the hyperedges of $\cH$, which are the minimal clique transversals of $G$,  are all singletons.
But since $G$ is the co-occurrence graph of $\cH$, this is only possible if $G$ is the one-vertex graph and, consequently,  $\cH$ consists of a single vertex and a single hyperedge of size one.
\end{proof}

A hypergraph $\cH$ is said to be \emph{self-dual} if $\cH^d = \cH$.
Note that every self-dual hypergraph is Sperner, since $\cH^d$ is Sperner by definition.

\begin{theorem}\label{thm:self-dual-hypergraphs}
Let $\cH$ be a self-dual hypergraph.
Then $\cH$ is conformal if and only if $\cH$ consists of a single vertex and a single hyperedge of size one.
\end{theorem}

\begin{proof}
Let $\cH$ be a self-dual conformal hypergraph.
Then, $\cH^c = \cH = \cH^d$.
By \Cref{lem:HcEqualsHd}, $\cH$ consists of a single vertex and a single hyperedge of size one.
The other direction is trivial.
\end{proof}

We will use hypergraph conformalization again in \Cref{sec:cycles}.

Given two graphs $G$ and $H$, we say that $G$ is \emph{$H$-free} if no induced subgraph of $G$ is isomorphic to~$H$.
The following theorem characterizes graphs for which $G$ and $G^c$ are complements of each other.

\begin{theorem}[Theorems 2 and 3 in Gurvich~\cite{MR0441560}; see also Chapter 10 in Crama and Hammer~\cite{zbMATH05852793}]\label{thm:clique-dual-complementary}
For every graph $G$, the following three properties are equivalent.
\begin{enumerate}
    \item The graphs $G$ and $G^c$ are edge-disjoint.
    \item $G^c = \overline{G}$.
    \item $G$ is $P_4$-free.
\end{enumerate}
\end{theorem}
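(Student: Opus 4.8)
The plan is to establish the cycle of implications (3)\,$\Rightarrow$\,(2)\,$\Rightarrow$\,(1)\,$\Rightarrow$\,(3). The implication (2)\,$\Rightarrow$\,(1) is immediate, since a graph and its complement are edge-disjoint by definition, so $G^c=\overline{G}$ forces $G$ and $G^c$ to be edge-disjoint. It is convenient to record the reformulation of (1) that I will use for the hard direction: since two vertices are adjacent in $G^c$ exactly when they lie in a common minimal clique transversal, $G$ and $G^c$ are edge-disjoint if and only if \emph{every minimal clique transversal of $G$ is an independent set of $G$}. I will also use throughout that every vertex of $G$ lies in some minimal clique transversal; this follows from the subtransversal criterion (\Cref{subtransversal-characterization}) applied to a singleton $S=\{u\}$, because for any maximal clique $e_u\ni u$ the set $e_u\setminus\{u\}$ contains no maximal clique (otherwise such a clique could be extended by $u$).

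For (3)\,$\Rightarrow$\,(2) I would induct on the cotree structure of a $P_4$-free graph, using the classical fact that every $P_4$-free graph on at least two vertices is either a disjoint union $G=G_1\cup G_2$ or a join $G=G_1+G_2$ of two smaller nonempty $P_4$-free graphs, the base case $K_1$ being trivial as $K_1^c=K_1=\overline{K_1}$. In the disjoint-union case every maximal clique lies in one part, so the minimal clique transversals of $G$ are exactly the sets $T_1\cup T_2$ with $T_i$ a minimal clique transversal of $G_i$; hence two vertices of the same part are adjacent in $G^c$ iff they were adjacent in $G_i^c$, while any two vertices from different parts are adjacent in $G^c$ (each lies in some minimal clique transversal of its part, and these combine). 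Thus $G^c=G_1^c+G_2^c$, which by induction equals $\overline{G_1}+\overline{G_2}=\overline{G_1\cup G_2}=\overline{G}$. In the join case every maximal clique of $G$ has the form $C_1\cup C_2$ with $C_i$ a maximal clique of $G_i$, from which one checks that a set $T$ is a clique transversal of $G$ iff $T\cap V(G_1)$ is a clique transversal of $G_1$ or $T\cap V(G_2)$ is a clique transversal of $G_2$; minimality then forces each minimal clique transversal of $G$ to lie entirely in one part and to be a minimal clique transversal there. Hence no two vertices from different parts share a minimal clique transversal, giving $G^c=G_1^c\cup G_2^c=\overline{G_1}\cup\overline{G_2}=\overline{G_1+G_2}=\overline{G}$ by induction. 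This settles (3)\,$\Rightarrow$\,(2).

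The remaining and substantive implication is (1)\,$\Rightarrow$\,(3), which I would prove by contrapositive: assuming $G$ contains an induced path $a$-$b$-$c$-$d$, I must produce a minimal clique transversal that is not independent, equivalently an edge of $G$ that is also an edge of $G^c$. The tool is again the subtransversal criterion: an adjacent pair $\{u,v\}$ lies in a minimal clique transversal iff there are maximal cliques $e_u\ni u$ (with $v\notin e_u$) and $e_v\ni v$ (with $u\notin e_v$) such that $(e_u\cup e_v)\setminus\{u,v\}$ contains no maximal clique. The natural first attempt is the middle edge $\{b,c\}$ with $e_b$ extending $\{a,b\}$ and $e_c$ extending $\{c,d\}$; then $a,d\in(e_b\cup e_c)\setminus\{b,c\}$ and $a\not\sim d$, which prevents the union from being a single clique and is exactly the kind of obstruction the criterion asks for.

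The main obstacle lies precisely here: the displayed set can nonetheless contain a maximal clique that uses some vertices of $e_b$ and some of $e_c$ while avoiding the non-adjacent pair $\{a,d\}$, and in that case $\{b,c\}$ need not be a subtransversal at all. (A concrete phenomenon: if a common neighbor of $a$ and $d$ is forced into $e_b$, it may form a maximal clique with a vertex of $e_c$.) Consequently the middle edge does not always work, and one is forced to choose the edge and the two witnessing maximal cliques with care. My plan is to show that at least one of the three edges $ab$, $bc$, $cd$ of the induced $P_4$ is a subtransversal, distinguishing cases according to which endpoint possesses a maximal clique avoiding its $P_4$-neighbour, and to control the union by taking the witnessing maximal cliques as small as the criterion permits (or by passing to a suitably extremal induced $P_4$). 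Verifying in each case that $(e_u\cup e_v)\setminus\{u,v\}$ contains no maximal clique is the technical heart of the argument, and it is here that the full force of $P_4$-freeness must be used rather than mere $C_4$-freeness, since an induced $C_4$ is itself a cograph and does not obstruct the conclusion. Once some edge of $G$ is exhibited inside a minimal clique transversal, $G$ and $G^c$ fail to be edge-disjoint, contradicting (1) and closing the cycle.
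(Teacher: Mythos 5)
Your implications (2)\,$\Rightarrow$\,(1) and (3)\,$\Rightarrow$\,(2) are correct and complete; in particular the cotree induction (disjoint union versus join, with the auxiliary fact that every vertex lies in some minimal clique transversal, correctly justified via \Cref{subtransversal-characterization}) is a sound, self-contained argument. Note that the paper itself offers no proof of this theorem --- it is imported from Gurvich~\cite{MR0441560} (see also~\cite{zbMATH05852793}) --- so your attempt stands on its own merits. The problem is the direction (1)\,$\Rightarrow$\,(3), which is the substantive content of Gurvich's theorem: here you give only a plan and explicitly defer its ``technical heart'' (verifying that $(e_u\cup e_v)\setminus\{u,v\}$ contains no maximal clique in each case). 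A proof with its hardest step announced but not carried out is not a proof, so this is a genuine gap.

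Moreover, the gap is not merely a deferred verification: the central claim of your plan --- that for a given induced $P_4$ $a$--$b$--$c$--$d$ at least one of the edges $ab$, $bc$, $cd$ is a subtransversal --- is false. Let $G$ have vertex set $\{a,b,c,d,u,v\}$ and edges $ab, bc, cd, ua, ub, vc, vd, uv$ (a $6$-cycle $a,b,c,d,v,u$ with chords $ub$ and $vc$). Then $a$--$b$--$c$--$d$ is an induced $P_4$, and the maximal cliques of $G$ are $\{a,b,u\}$, $\{b,c\}$, $\{c,d,v\}$, $\{u,v\}$. For the edge $ab$: the only maximal clique containing $a$ is $\{a,b,u\}$, which also contains $b$, so in any clique transversal containing both, $a$ is redundant; hence no minimal clique transversal contains $\{a,b\}$, and symmetrically none contains $\{c,d\}$. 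For the middle edge $bc$: any clique transversal $T\supseteq\{b,c\}$ must hit $\{u,v\}$; if $u\in T$ then $T\setminus\{b\}$ is still a transversal, and if $v\in T$ then $T\setminus\{c\}$ is, so $T$ is never minimal. Thus none of the three edges of this induced $P_4$ lies in a minimal clique transversal --- yet the theorem of course holds for $G$, since $\{a,c,v\}$ is a minimal clique transversal containing the edge $cv$, which is an edge of the \emph{different} induced $P_4$ $c$--$v$--$u$--$a$. So the entire difficulty of (1)\,$\Rightarrow$\,(3) is concentrated in your vague fallback ``passing to a suitably extremal induced $P_4$,'' which is left completely unspecified; as written, the hard direction of the theorem is missing.
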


The following property of $P_4$-free graphs was shown by Gurvich~\cite{MR0441560} and also by Karchmer, Linial, Newman, Saks, and Wigderson~\cite{MR1217758} (see also Gurvich~\cite{MR2755907} and Golumbic and Gurvich~\cite[Chapter 10]{zbMATH05852793}).

\begin{theorem}[\cite{MR0441560,MR1217758}; see also~\cite{MR2755907,zbMATH05852793}]\label{thm:P4-free}
Let $G$ be a $P_4$-free graph and $S\subseteq V(G)$.
Then $S$ is a minimal clique transversal in $G$ if and only if $S$ is a maximal independent set.
\end{theorem}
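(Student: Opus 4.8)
My plan is to prove both inclusions simultaneously by induction on $|V(G)|$, exploiting the recursive structure of $P_4$-free graphs. Recall the well-known structure theorem for cographs: a $P_4$-free graph $G$ on at least two vertices is either a disjoint union $G = G_1 \cup G_2$ or a join $G = G_1 + G_2$ of two smaller $P_4$-free graphs $G_1,G_2$ (equivalently, $G$ or $\overline{G}$ is disconnected, and the factors are again $P_4$-free). The base case $G = K_1$ is immediate: the single vertex is the unique maximal independent set and the unique minimal clique transversal. For the inductive step I would show, in each of the two cases, that the family of minimal clique transversals of $G$ and the family of maximal independent sets of $G$ are both assembled from the corresponding families of $G_1$ and $G_2$ by the \emph{same} rule, so that the inductive hypothesis forces them to coincide.

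In the disjoint-union case $G = G_1 \cup G_2$, every clique of $G$ lies entirely in one part, so the maximal cliques of $G$ are exactly those of $G_1$ together with those of $G_2$; writing $S_i = S \cap V(G_i)$, a set $S$ is a (minimal) clique transversal of $G$ if and only if each $S_i$ is a (minimal) clique transversal of $G_i$. On the other side, since there are no edges between the parts, the maximal independent sets of $G$ are exactly the sets $I_1 \cup I_2$ with $I_i$ a maximal independent set of $G_i$. Thus both families are the ``product'' of the corresponding families of the two factors, and the inductive hypothesis yields the desired equality.

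The join case $G = G_1 + G_2$ is the delicate one, and I expect it to be the main obstacle. Here all edges between the parts are present, so a maximal clique of $G$ has the form $C_1 \cup C_2$ with $C_i$ a maximal clique of $G_i$; from this one checks that $S$ is a clique transversal of $G$ if and only if $S \cap V(G_1)$ is a clique transversal of $G_1$ \emph{or} $S \cap V(G_2)$ is a clique transversal of $G_2$. The point is that minimality collapses this disjunction: since the empty set is never a clique transversal of a nonempty graph, a minimal clique transversal of $G$ must be contained in a single part and be a minimal clique transversal there. Hence the minimal clique transversals of $G$ are precisely those of $G_1$ together with those of $G_2$. Dually, no independent set of $G$ can meet both parts, so the maximal independent sets of $G$ are again exactly those of $G_1$ together with those of $G_2$; the inductive hypothesis then closes the case.

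As a sanity check and an alternative entry point, \Cref{thm:clique-dual-complementary} already gives that $G^c = \overline{G}$ for $P_4$-free $G$, which means any two vertices lying in a common minimal clique transversal are non-adjacent; hence every minimal clique transversal of a $P_4$-free graph is an independent set. This yields one half of the statement cheaply, but upgrading ``independent'' to ``maximal independent'' (and handling the converse) still appears to require the structural induction above, which is why I would organize the whole argument around the cograph decomposition.
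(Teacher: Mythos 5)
Your proof is correct, but there is nothing in the paper to compare it against in the literal sense: the paper does not prove \Cref{thm:P4-free} at all, it imports the result from the cited literature (Gurvich; Karchmer, Linial, Newman, Saks, and Wigderson). Judged on its own merits, your structural induction via the cograph decomposition is sound. The disjoint-union case correctly decouples both families coordinate-wise, and in the join case your two key observations do the work: the maximal cliques of the join are exactly the unions $C_1\cup C_2$ of maximal cliques of the parts, and the empty set is never a clique transversal of a nonempty graph, so minimality forces a minimal clique transversal of the join to live in a single part and be minimal there; the same one-part confinement holds for maximal independent sets, closing the induction. It is worth noting that your route parallels, in hands-on form, what the paper does with heavier machinery: the remark following \Cref{thm:substitution-CDC} observes that every $P_4$-free graph arises by substitution from smaller $P_4$-free graphs (disjoint union and join being exactly the substitutions into $2K_1$ and $K_2$), and the substitution lemmas (\Cref{lem:graphs-substitution-via-hypergraphs,lem:hypergraph-dualization-of-substitution}) encode the same composition rules for maximal cliques and minimal transversals that you verify directly. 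However, that machinery only yields the weaker conclusion that $P_4$-free graphs are CDC (\Cref{cor:P4-free}), whereas your induction proves the sharper statement identifying the minimal clique transversals exactly as the maximal independent sets. One caution: your ``sanity check'' via \Cref{thm:clique-dual-complementary} should remain just that, since in the cited sources that theorem and \Cref{thm:P4-free} have common provenance and leaning on it would risk circularity; your main argument, which does not use it, is self-contained modulo the standard cograph structure theorem that the paper itself cites.
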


\begin{corollary}\label{cor:P4-free}
Every $P_4$-free graph is CDC.
\end{corollary}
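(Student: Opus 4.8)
The plan is to combine the two preceding theorems about $P_4$-free graphs through \Cref{obs:CDC-via-Gc}. By that observation, to show that a $P_4$-free graph $G$ is CDC it suffices to verify that the maximal cliques of $G^c$ coincide exactly with the minimal clique transversals of $G$. I would establish this equality by describing both families independently and then checking that they agree.

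First I would determine the maximal cliques of $G^c$. Since $G$ is $P_4$-free, \Cref{thm:clique-dual-complementary} gives $G^c = \overline{G}$. The maximal cliques of $\overline{G}$ are, by the definition of the complement, precisely the maximal independent sets of $G$: a set is a clique in $\overline{G}$ if and only if it is an independent set in $G$, and this correspondence preserves inclusion and hence maximality. Thus the maximal cliques of $G^c$ are exactly the maximal independent sets of~$G$.

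Next I would determine the minimal clique transversals of $G$. This is handed to us directly by \Cref{thm:P4-free}: for a $P_4$-free graph, a set $S \subseteq V(G)$ is a minimal clique transversal if and only if it is a maximal independent set.

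Putting the two computations together, both the maximal cliques of $G^c$ and the minimal clique transversals of $G$ are exactly the maximal independent sets of $G$, so the two families coincide; by \Cref{obs:CDC-via-Gc}, $G$ is CDC. There is no genuine obstacle here beyond recognizing that the two cited characterizations funnel into the same object, namely the maximal independent sets of $G$. The only point requiring (minimal) care is the elementary observation that the maximal cliques of the complement are the maximal independent sets of the original graph, which is precisely what allows \Cref{thm:clique-dual-complementary} and \Cref{thm:P4-free} to be compared.
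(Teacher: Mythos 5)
Your proof is correct and follows exactly the same route as the paper's: both invoke \Cref{thm:clique-dual-complementary} to get $G^c = \overline{G}$, \Cref{thm:P4-free} to identify minimal clique transversals with maximal independent sets, and \Cref{obs:CDC-via-Gc} to conclude. The only difference is that you spell out the (implicit in the paper) step that maximal cliques of $\overline{G}$ are the maximal independent sets of $G$, which is a reasonable bit of added explicitness.
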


\begin{proof}
Let $G$ be a $P_4$-free graph. 
By \Cref{thm:clique-dual-complementary}, $G^c = \overline{G}$.
By \Cref{obs:CDC-via-Gc}, it suffices to show that in $G$, the maximal independent sets coincide with the minimal clique transversals.
This holds by \Cref{thm:P4-free}.
\end{proof}

\section{Examples of CDC and non-CDC graphs}\label{sec:examples}

In this section, we give additional examples of CDC graphs and non-CDC graphs, including two infinite families of split CDC graphs and three infinite families of cobipartite CDC graphs.

\subsection{Warm-up examples}

In \Cref{fig:CDC-examples} we show four small CDC graphs.

\begin{figure}[h!]
	\centering
	\includegraphics[width=0.9\textwidth]{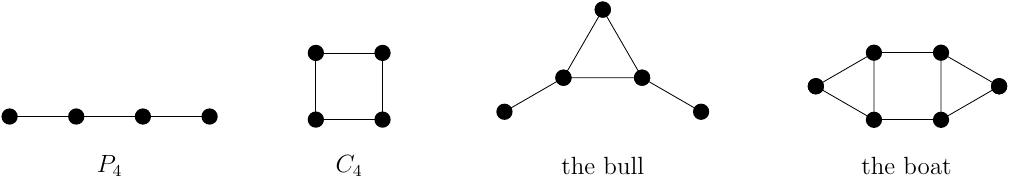}
	\caption{Four small CDC graphs.}
	\label{fig:CDC-examples}
\end{figure}

We leave it as an exercise for the reader to verify that the graphs depicted in \Cref{fig:CDC-examples} are CDC.
The fact that $P_4$ and $C_4$ are CDC graphs also follows from a characterization of triangle-free CDC graphs, which we will develop in \Cref{sec:triangle-free-CDC} (see, e.g.,
\Cref{triangle-free-with-bpm}).
Infinite families of CDC graphs generalizing the bull and the boat, respectively, will be presented in \Cref{subsec:split-examples,subsec:cobipartite-examples}.

\subsection{Examples of non-CDC graphs}\label{subsec:examples-non-CDC}

In \Cref{fig:non-CDC} we show four small non-CDC graphs.

\begin{figure}[h!]
	\centering
	\includegraphics[width=0.9\textwidth]{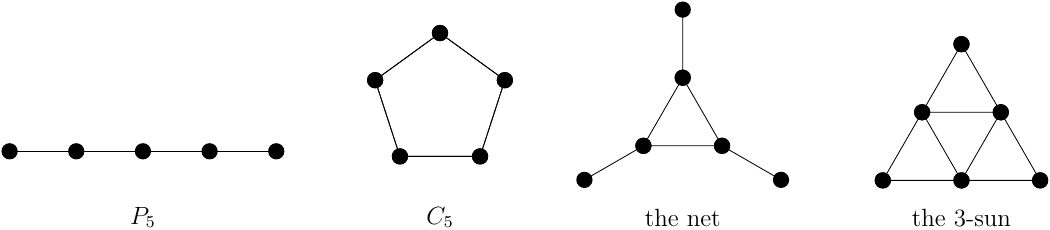}
	\caption{Four small non-CDC graphs.}
	\label{fig:non-CDC}
\end{figure}

As we explain next, each of these four graphs is a member of an infinite family of non-CDC graphs.

\begin{example}\label{ex:C5}
The $5$-cycle $C_5$ is not CDC.
The clique hypergraph $\C(C_5)$ is equal to the~$C_5$.
Fixing an order $v_1,\ldots,v_5$ of the vertices along the cycle, the dual hypergraph $\C^d(C_5)$ has vertex set $\{v_1,\ldots,v_5\}$ and five hyperedges:
$\{v_1,v_2,v_4\}$, $\{v_2,v_3,v_5\}$, $\{v_3,v_4,v_1\}$, $\{v_4,v_5,v_2\}$, and $\{v_5,v_1,v_3\}$. 
It is easy to see that this hypergraph is not conformal, for example by verifying that it is not the clique hypergraph of its co-occurrence graph, which is the complete graph with vertex set  $\{v_1,\ldots,v_5\}$.
Let us remark that the fact that $C_5$ is not a CDC graph also follows from a characterization of triangle-free CDC graphs given by \Cref{triangle-free-CDC}.
\hfill$\blacktriangle$
\end{example}

We leave it as an exercise for the reader to verify that the $5$-vertex path $P_5$ is also not CDC.
In fact, it follows from \Cref{triangle-free-CDC} that no path $P_n$ or cycle $C_n$ with $n\ge 5$ is a CDC graph.

In our next two examples, we identify two infinite families of non-CDC split graphs. 
Split CDC graphs will be characterized in \Cref{sec:split-CDC-graphs}.

\bigskip
\begin{example}\label{ex:combs}
Fix an integer $n\ge 3$ and let $G$ be the graph vertex set $\{u_1,\ldots, u_n\}\cup\{v_1,\ldots, v_n\}$, in which $C = \{u_1,\ldots, u_n\}$ is a clique, for each $i\in \{1,\ldots,n\}$, vertices $u_i$ and $v_i$ are adjacent, and there are no other edges.
The clique hypergraph of $G$ consists of the following hyperedges: 
\[E(\C(G)) = \{C\}\cup \{\{u_i,v_i\}: 1\le i\le n\}\,.\]
For a set $S\subseteq \{u_1,\ldots, u_n\}$, we denote by $f(S)$ the set of all vertices $v_j$ such that $1\le j\le n$ and $u_j\not \in S$.
It is not difficult to verify that the dual hypergraph of the clique hypergraph of $G$ consists of the following hyperedges:
\[E(\C^d(G)) = \{S\cup f(S): \emptyset\neq S\subseteq \{u_1,\ldots, u_n\}\}\,.\]
The co-occurrence graphs of these two hypergraphs are the graphs $G$ and $G^c$ depicted in \Cref{fig:comb}.
Note that $G^c$ is the graph obtained from the complete graph with vertex set $V(G)$ by removing from it the edges of the perfect matching $\{u_iv_i:1\le i\le n\}$.

\begin{figure}[h!]
	\centering
	\includegraphics[width=0.7\textwidth]{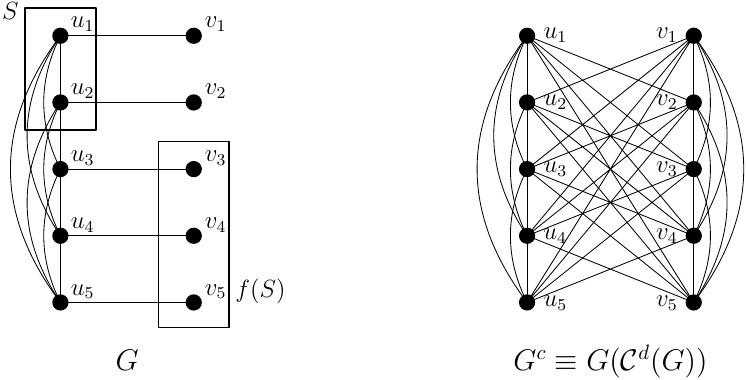}
	\caption{An example of a non-CDC split graph $G$ (for $n = 5$) and its clique-dual. In the graph $G$ an example of a minimal clique transversal of the form $S\cup f(S)$ is also shown.}
	\label{fig:comb}
\end{figure}

Observe that the set $\{v_1,\ldots, v_n\}$ forms a maximal clique in the graph $G^c$, but is not a hyperedge of $\C^d(G)$.
Therefore, the hypergraph $\C^d(G)$ is not conformal, since it is not the clique hypergraph of its co-occurrence graph.
Note that the assumption $n\ge 3$ is necessary for the vertices $v_1$ and $v_2$ to be adjacent in $G^c$.
In fact, for $n = 2$ the corresponding graph $G$ is isomorphic to the $4$-vertex path, which is CDC.
\hfill$\blacktriangle$
\end{example}

\bigskip
\begin{example}\label{ex:anticombs}
Fix an integer $n\ge 3$ and let $G$ be the graph vertex set $\{u_1,\ldots, u_n\}\cup\{v_1,\ldots, v_n\}$, in which $C = \{u_1,\ldots, u_n\}$ is a clique, for each $i,j\in \{1,\ldots,n\}$ with $i\neq j$, vertices $u_i$ and $v_i$ are adjacent, and there are no other edges.
The clique hypergraph of $G$ consists of the following hyperedges: 
\[E(\C(G)) = \{C\}\cup\{(C\setminus \{u_i\})\cup \{v_i\}: 1\le i\le n\}\,.\]
The dual hypergraph of the clique hypergraph of $G$ consists of the following hyperedges:
\[E(\C^d(G)) = \{\{u_i,u_j\}: 1\le i<j\le n\}\cup \{\{u_i,v_i\}: 1\le i\le n\}\,.\]
The co-occurrence graphs of these two hypergraphs are the graphs $G$ and $G^c$ depicted in \Cref{fig:anticomb}.
Note that $G^c$ is a member of the non-CDC-family presented in \Cref{ex:combs}.

\begin{figure}[h!]
	\centering
	\includegraphics[width=0.7\textwidth]{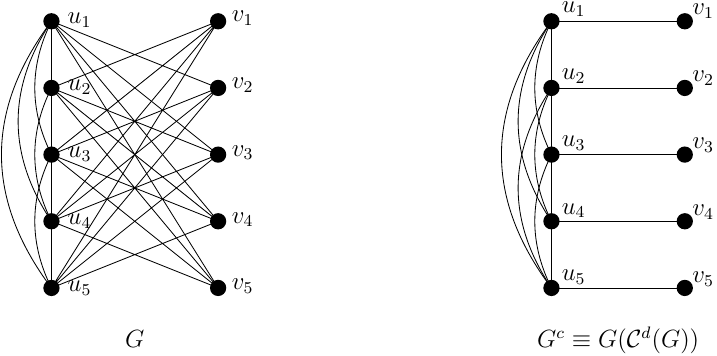}
	\caption{An example of a non-CDC split graph $G$ (for $n = 5$) and its clique-dual.}
	\label{fig:anticomb}
\end{figure}

Since the set $\{u_1,\ldots, u_n\}$ forms a maximal clique in the graph $G^c$ that is not a hyperedge of $\C^d(G)$ (since $n\ge 3$), we infer that the hypergraph $\C^d(G)$ is not conformal.
Note again the above argument fails for $n = 2$; indeed, for $n = 2$ the corresponding graph $G$ is isomorphic to the $4$-vertex path, which is CDC.
\hfill$\blacktriangle$
\end{example}

\subsection{Two infinite families of split CDC graphs}\label{subsec:split-examples}

Interestingly, the above two families of non-CDC split graphs can be turned into CDC split graphs by a small modification, namely by extending one of the maximal cliques of each of these graphs into a larger maximal clique by adding to it one additional vertex.
We omit the proof that these graphs are CDC, since this follows from a characterization of split CDC graphs that we will present in \Cref{CDC-split-graphs-characterization}.

\bigskip
\begin{example}\label{ex:settled-combs}
Fix an integer $n\ge 1$ and let $G$ be the graph vertex set $\{u_0,u_1,\ldots, u_n\}\cup\{v_1,\ldots, v_n\}$, in which $C = \{u_0,u_1,\ldots, u_n\}$ is a clique, for each $i,j\in \{1,\ldots,n\}$ with $i\neq j$, vertices $u_i$ and $v_i$ are adjacent, and there are no other edges.

\begin{figure}[h!]
	\centering
	\includegraphics[width=0.7\textwidth]{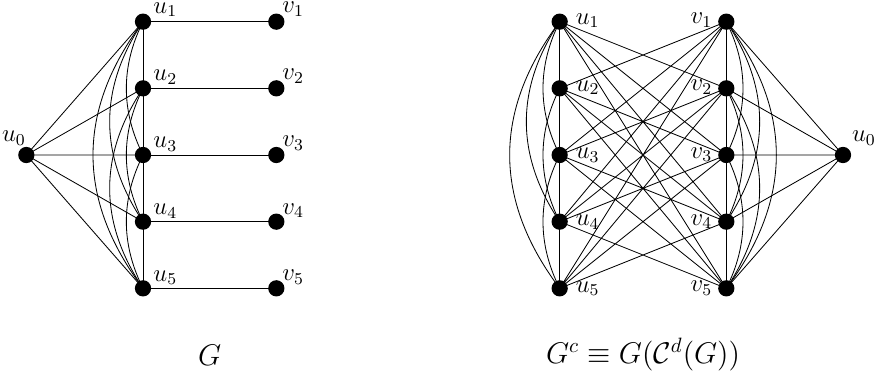}
	\caption{An example of a CDC split graph $G$ (for $n = 5$) and its clique-dual.}
	\label{fig:settled-comb}
\end{figure} 

The graph $G$ and its clique-dual are depicted in \Cref{fig:settled-comb}.
Note that this family of examples generalizes the two graphs in \Cref{fig:cdc}.
\hfill$\blacktriangle$
\end{example}

\bigskip
\begin{example}\label{ex:settled-anticombs}
Fix an integer $n\ge 1$ and let $G$ be the graph vertex set $\{u_0,u_1,\ldots, u_n\}\cup\{v_1,\ldots, v_n\}$, in which $C = \{u_0,u_1,\ldots, u_n\}$ is a clique, for each $i,j\in \{1,\ldots,n\}$ with $i\neq j$, vertices $u_i$ and $v_i$ are adjacent, and there are no other edges.

\begin{figure}[h!]
	\centering
	\includegraphics[width=0.7\textwidth]{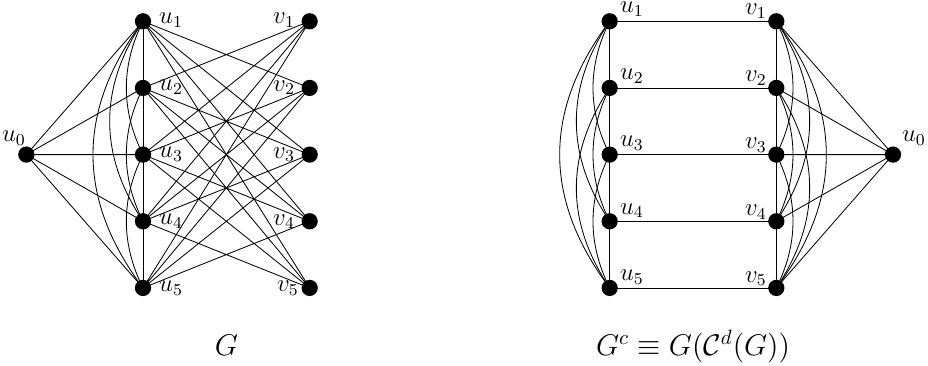}
	\caption{An example of a CDC split graph $G$ (for $n = 5$) and its clique-dual.}
	\label{fig:settled-anticomb}
\end{figure} 

The graph $G$ and its clique-dual are depicted in \Cref{fig:settled-anticomb}.
\hfill$\blacktriangle$
\end{example}

Graphs in \Cref{ex:combs,ex:settled-combs} (or those from \Cref{ex:anticombs,ex:settled-anticombs}) show that the class of CDC graphs is not closed under vertex deletion.
Another construction leading to the same conclusion will be presented at the end of \Cref{subsec:triangle-free-CDC-characterization}.

\subsection{Three infinite families of cobipartite CDC graphs}\label{subsec:cobipartite-examples}

By \Cref{prop:CDC-implies-2-cycle}, if a graph $G$ is CDC, then so is its clique-dual $G^c$.
Thus, the clique-duals of graphs from \Cref{ex:settled-combs,ex:settled-anticombs} are cobipartite CDC graphs.
We now describe three further infinite families of cobipartite CDC graphs.

\begin{sloppypar}
\begin{example}\label{ex:first-family}
Fix an integer $n\ge 1$ and consider the graph $G$ with vertex set $\{u_0,u_1,\ldots, u_n\}\cup\{v_0,v_1,\ldots, v_n\}$, in which $C = \{u_0,u_1,\ldots, u_n\}$ and $D= \{v_0,v_1,\ldots, v_n\}$ are cliques, for each $i\in \{1,\ldots,n\}$, vertices $u_i$ and $v_i$ are adjacent, and there are no other edges.
Note that for $n = 1$, we obtain the $4$-vertex path $P_4$, for which the CDC property was already observed.
The clique hypergraph of $G$ consists of the following hyperedges: 
\[E(\C(G)) = \{C,D\}\cup \{\{u_i,v_i\}: 1\le i\le n\}\,.\]
For a set $S\subseteq \{u_1,\ldots, u_n\}$, we denote by $f(S)$ the set of all vertices $v_j\in D$ such that $1\le j\le n$ and $u_j\not \in S$.
It is not difficult to verify that the dual hypergraph of the clique hypergraph of $G$ consists of the following hyperedges:
\[E(\C^d(G)) = \{\{u_0\}\cup (D\setminus\{v_0\}),\{v_0\}\cup(C\setminus\{u_0\})\}\cup \{S\cup f(S): \emptyset\neq S\subset \{u_1,\ldots, u_n\}\}\,,\]
where $\subset$ denotes the proper inclusion relation on sets.
The co-occurrence graphs of these two hypergraphs are the graphs $G$ and $G^c$ depicted in \Cref{fig:family-1}.

\begin{figure}[h!]
	\centering
	\includegraphics[width=0.7\textwidth]{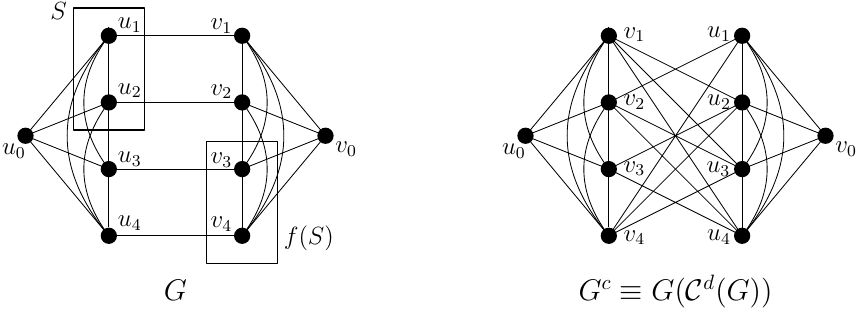}
	\caption{An example of a cobipartite CDC graph $G$ (for $n = 4$) and its clique-dual. 
 In the graph $G$ an example of a minimal clique transversal of the form $S\cup f(S)$ is also shown.}
	\label{fig:family-1}
\end{figure}

The graph $G$ is CDC, since the hypergraph $\C^d(G)$ coincides with the clique hypergraph of $G^c$ and is therefore conformal.
\hfill$\blacktriangle$
\end{example}
\end{sloppypar}

\bigskip
\begin{example}\label{ex:second-family}
For graphs in our next family, we first describe their complements (which are also CDC).
Informally speaking, these are subdivided stars with all branches of length two, except one, which is of length one.
More precisely, fix an integer $n\ge 1$ and let $G$ be the graph vertex set $\{u_0,u_1,\ldots, u_n\}\cup\{v_0,v_1,\ldots, v_n\}$ and edge set $\{u_0v_0\}\cup\{u_0u_i: 1\le i\le n\}\cup \{u_iv_i:1\le i\le n\}$. 
The maximal cliques of $G$ are precisely its edges.

Similarly as in \Cref{ex:first-family}, for a set $S\subseteq \{u_1,\ldots, u_n\}$ we denote by $f(S)$ the set of all vertices $v_j$ such that $1\le j\le n$ and $u_j\not \in S$.
It is not difficult to verify that the dual hypergraph of the clique hypergraph of $G$ consists of the following hyperedges:
\[E(\C^d(G)) = \big\{\{v_0,u_1,\ldots, u_n\}\big\}\cup\big\{\{u_0\}\cup S\cup f(S):S\subseteq \{u_1,\ldots, u_n\}\big\}\,.\]
This hypergraph is conformal, since it coincides with the clique hypergraph of its co-occurrence graph, $G^c$ (see \Cref{fig:family-2}).
The graph $G^c$ is the graph vertex set $V(G)$ in which the vertex $u_0$ has a unique non-neighbor $v_0$, the vertex $v_0$ has neighborhood $\{u_1,\ldots, u_n\}$, which forms a clique, and the subgraph induced by $V(G)\setminus\{u_0,v_0\}$ is a complete graph minus a perfect matching $\{u_iv_i : 1 \le i \le n\}$.

\begin{figure}[h!]
	\centering
	\includegraphics[width=0.65\textwidth]{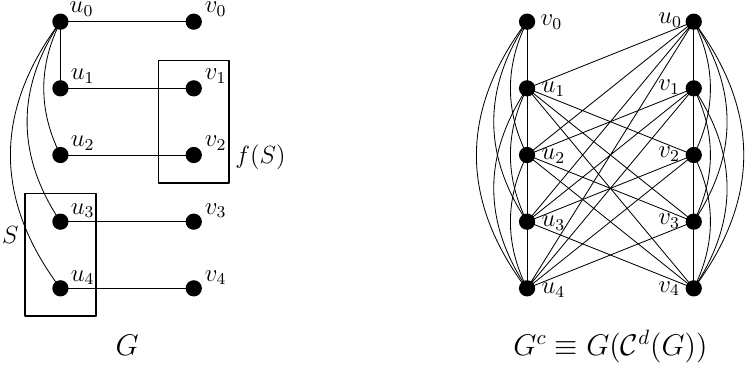}
	\caption{An example of a bipartite CDC graph $G$ (for $n = 4$) and its clique-dual.}
	\label{fig:family-2}
\end{figure}

Note that the graph $G^c$ is cobipartite; moreover, it can be observed that $G^c$ is isomorphic to the complement of~$G$.
This is not a coincidence.
The graph $G$ is a triangle-free graph satisfying the condition of \Cref{triangle-free-with-bpm} and hence the graph $G^{c}$ is isomorphic to $\overline{G}$, which is also a CDC graph.
It can be verified that the minimal clique transversals of $G^c$ are exactly the edges of~$G$.
For more details, see \Cref{sec:triangle-free-CDC}.
\hfill$\blacktriangle$
\end{example}

\bigskip
\begin{example}\label{ex:third-family}
The construction is similar as in \Cref{ex:first-family}, but with more edges.
Fix an integer $n\ge 2$ and consider the graph $G$ with vertex set $\{v_1,\ldots, v_{2n}\}$, in which two distinct vertices $v_i$ and $v_j$ are adjacent if and only if $|i-j|<n$. 
The maximal cliques of $G$ are precisely the sets $C_j = \{v_i:j\le i\le j+n-1\}$ where $j\in \{1,\ldots, n+1\}$, that is,
\[E(\C(G)) = \{C_1,\ldots, C_{n+1}\}\,.\]
Note that each maximal clique of $G$ has size~$n$.
Let $S$ be a hyperedge of the dual hypergraph $\C^d(G)$, that is, $S$ is a minimal transversal of the maximal cliques of~$G$.
Then $S$ contains a vertex from $C_1$, that is, a vertex $v_i$ with $1\le i\le n$.
Let $v_i$ be the vertex in $S\cap\{v_1,\ldots, v_n\}$ with the largest index.
Similarly, $S$ contains a vertex from $C_{n+1}$, that is, a vertex $v_j$ with $n+1\le j\le 2n$.
Let $v_j$ be the vertex in  $S\cap\{v_{n+1},\ldots, v_{2n}\}$ with the smallest index.
The minimality of $S$ implies that  $S\cap\{v_1,\ldots, v_n\} = \{v_i\}$, since if $v\in (S\cap\{v_1,\ldots, v_n\})\setminus\{v_i\}$, then the set $S\setminus\{v\}$ is also a clique transversal of~$G$.
Similarly, $S\cap\{v_{n+1},\ldots, v_{2n}\} = \{v_j\}$.
Thus, every minimal transversal of the maximal cliques of $G$ contains exactly one vertex from $\{v_1,\ldots, v_n\}$ and exactly one vertex from $\{v_{n+1},\ldots, v_{2n}\}$.
It follows that the co-occurrence graph of $\C^d(G)$ is bipartite and hence, $\C^d(G)$ is conformal and $G$ is CDC.

A more precise description of the hypergraph $\C^d(G)$ (and thus of its co-occurrence graph,  $G^c$) can also be easily obtained: a set $S = \{v_i,v_j\}$ with $v_i\in \{v_1,\ldots, v_n\}$ and $v_j\in \{v_{n+1},\ldots, v_{2n}\}$ is a minimal clique transversal of $G$ if and only if $|j-i|\le n$, that is,
\[E(\C^d(G)) = \{\{v_i,v_j\}: 1\le i\le n < j\le n+i\}\,.\]
The co-occurrence graphs of these two hypergraphs are the graphs $G$ and $G^c$ displayed in \Cref{fig:family-3}.

\begin{figure}[h!]
	\centering
	\includegraphics[width=0.7\textwidth]{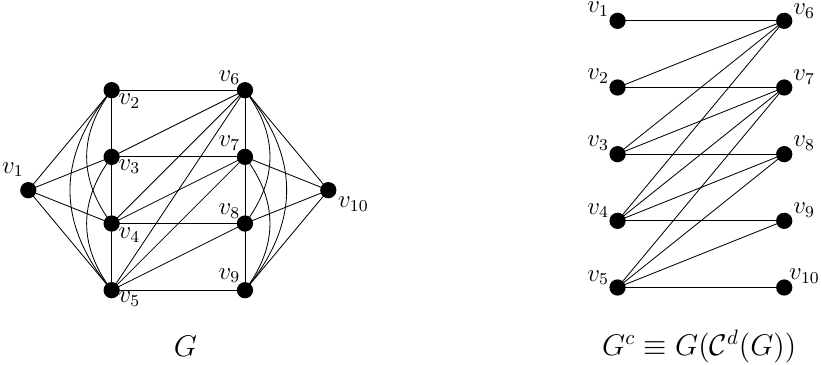}
	\caption{An example of a cobipartite CDC graph $G$ (for $n = 5$) and its clique-dual.}
	\label{fig:family-3}
\end{figure}

It can be observed that $G^c$ is isomorphic to the complement of~$G$.
This is not a coincidence.
By \Cref{prop:CDC-implies-2-cycle}, $G^{cc} = G$.
Furthermore, the graph $G^c$ is a triangle-free graph satisfying the condition of \Cref{triangle-free-with-bpm} and hence the graph $G^{cc} = G$ is isomorphic to $\overline{G^{c}}$, or, equivalently, $G^c$ is isomorphic to $\overline{G}$.
\hfill$\blacktriangle$
\end{example}

As indicated by the above examples, bipartite CDC graphs may have CDC complements.
This is not a coincidence; in fact, even more generally, the complement of any triangle-free CDC graph is also CDC (see \Cref{cor:Gc-for-triangle-free}).
However, not all cobipartite CDC graphs are complements of bipartite CDC graphs; for example, the graphs constructed in \Cref{ex:first-family} are not (it can be verified, for example using \Cref{triangle-free-CDC}, that their complements are not CDC).

\section{CDC graphs are closed with respect to substitution}\label{sec:substitution}

In this section we show that the class of CDC graphs is closed with respect to substitution operation, in the strong sense that a graph constructed from two smaller graphs via substitution is CDC if and only if both constituent graphs are CDC.
Given two graphs $F$ and $G$ and a vertex $v\in V(G)$, the operation of \emph{substituting $F$ for $v$ in $G$} results in the graph denoted by $G_v[F]$ and obtained from the disjoint union of $G-v$ and $F$ by adding all edges joining a vertex of $N_G(v)$ with a vertex of~$F$.

Our approach will rely on the substitution operation for hypergraphs, which is defined as follows.
Given two hypergraphs $\F$ and $\G$ with disjoint vertex sets and a vertex $v\in V(\G)$, the operation of \emph{substituting $\F$ for $v$ in $\G$} results in the hypergraph denoted by $\G_v\langle\F\rangle$ and defined as follows:
\begin{itemize}
    \item the vertex set of $\G_v\langle\F\rangle$ is $V(\F)\cup (V(\G)\setminus \{v\})$;
    \item the hyperedge set of $\G_v\langle\F\rangle$ is 
    \[\{g\in E(\G): v\not\in g\} \cup \{ f \cup (g\setminus\{v\}): f\in E(\F), v\in g\in E(\G)\}\,.\]
\end{itemize}
Note that $\G_v\langle\F\rangle$ is Sperner if and only if $\F$ and $\G$ are Sperner.
Note also that the result of the graph substitution operation is different from the result of the hypergraph substitution operation applied to the corresponding $2$-uniform hypergraphs.\footnote{A hypergraph $\cH$ is said to be \emph{$k$-uniform} if $|e| = k$ for all $e\in E(\cH)$. 
Thus, $2$-uniform hypergraphs are precisely the (finite, simple, and undirected) graphs without isolated vertices.}
Recall also that, in this paper, graphs, unlike hypergraphs, may have isolated vertices.

\begin{lemma}\label{lem:graphs-substitution-via-hypergraphs}
Let $F$ and $G$ be two graphs, $v\in V(G)$, and $\F$ and $\G$ be, respectively, their clique hypergraphs.
Then 
\begin{equation}\label{eq:substitution-cliques}
\C(G_v[F]) = \G_v\langle\F\rangle\,.    
\end{equation}
\end{lemma}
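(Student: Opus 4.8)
The plan is to prove the set equality $\C(G_v[F]) = \G_v\langle\F\rangle$ directly, by showing that the maximal cliques of the graph $G_v[F]$ are exactly the hyperedges prescribed by the hypergraph substitution operation applied to the clique hypergraphs $\G = \C(G)$ and $\F = \C(F)$. Since both sides are Sperner hypergraphs on the same vertex set $V(F)\cup(V(G)\setminus\{v\})$, it suffices to establish that a set $K$ is a maximal clique of $G_v[F]$ if and only if $K$ belongs to $\{g\in E(\G): v\notin g\} \cup \{f\cup(g\setminus\{v\}): f\in E(\F),\, v\in g\in E(\G)\}$.

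First I would recall the structure of $G_v[F]$: its vertex set is $(V(G)\setminus\{v\})\cup V(F)$, the induced subgraph on $V(G)\setminus\{v\}$ is $G-v$, the induced subgraph on $V(F)$ is $F$, and each vertex of $F$ is adjacent to precisely the vertices in $N_G(v)$. The key combinatorial observation is that every vertex of $F$ has the same adjacency to $V(G)\setminus\{v\}$ as the original vertex $v$ had in $G$. Consequently, a clique $K$ of $G_v[F]$ either avoids $V(F)$ entirely, in which case $K$ is a clique of $G-v$ contained in $V(G)\setminus\{v\}$, or it meets $V(F)$ in a nonempty set $f$; in the latter case $K\cap V(F) = f$ must be a clique of $F$, and $K\setminus V(F)$ must be a clique of $G-v$ whose every vertex is adjacent to all of $f$, which (since $F$-vertices inherit $v$'s neighborhood) means $K\setminus V(F)\subseteq N_G(v)$, so that $(K\setminus V(F))\cup\{v\}$ is a clique of $G$ containing $v$.

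With this correspondence in hand, I would translate cliques into \emph{maximal} cliques. For the forward direction, take a maximal clique $K$ of $G_v[F]$. If $K\cap V(F)=\emptyset$, maximality forces $K$ to be a maximal clique of $G$ that avoids $v$, i.e.\ $K\in\{g\in E(\G):v\notin g\}$; here I must check that no vertex of $F$ could be added, which holds precisely because $K$ is not contained in $N_G(v)$ (otherwise $v$ would extend it in $G$). If $K\cap V(F)=f\neq\emptyset$, then maximality of $K$ forces $f$ to be a maximal clique of $F$ (so $f\in E(\F)$) and $g := (K\setminus V(F))\cup\{v\}$ to be a maximal clique of $G$ containing $v$ (so $v\in g\in E(\G)$), giving $K = f\cup(g\setminus\{v\})$. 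The converse direction runs the same argument backwards: each candidate set on the right-hand side is readily seen to be a clique of $G_v[F]$, and its maximality follows from the maximality of the constituent cliques $g$ in $G$ and $f$ in $F$ together with the inheritance of neighborhoods.

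The main obstacle, and the step deserving the most care, is verifying maximality in both directions rather than mere cliqueness, since this is where the distinction between the graph substitution and the naive $2$-uniform hypergraph substitution matters, and where the possibility of isolated vertices (allowed for graphs here) must be handled correctly. In particular, I would pay attention to the edge cases where $N_G(v)=\emptyset$ (so $F$ sits as a disconnected component and its maximal cliques are exactly the maximal cliques of $G_v[F]$ meeting $V(F)$) and where $f$ or $g\setminus\{v\}$ is empty, ensuring that the bijection between the two families is genuinely exhaustive and that no spurious non-maximal set sneaks in on either side. Once maximality is pinned down, the equality of the two Sperner hypergraphs is immediate.
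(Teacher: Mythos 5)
Your proposal is correct and takes essentially the same route as the paper's proof: a case analysis on whether a maximal clique of $G_v[F]$ meets $V(F)$, exploiting the fact that every vertex of $F$ inherits the neighborhood $N_G(v)$, with matching maximality checks in both directions. The paper's argument is simply a terser version of this same case analysis (your treatment of the $K \subseteq N_G(v)$ obstruction and the edge cases is more explicit than the paper's), so there is no substantive difference.
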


\begin{proof}
Let $C$ be a maximal clique in $G_v[F]$.
Assume first that $C$ does not contain any vertex of~$F$.
Then $C$ is a clique in $G$, and in fact a maximal clique, since otherwise $C$ would not be a maximal clique in $G_v[F]$.
Assume now that $C$ contains a vertex of~$F$.
Then the maximality of $C$ implies that $C\cap V(F)$ is a maximal clique in $F$ and that $(C\setminus V(F))\cup \{v\}$ is a maximal clique in~$G$.

Conversely, if $C$ is a maximal clique in $G$ that does not contain $v$, then $C$ is a maximal clique in $G_v[F]$, and if $C$ is a maximal clique in $G$ that contains $v$, then for every maximal clique $K$ in $F$, the set $(C\setminus \{v\})\cup K$ is a maximal clique in $G_v[F]$.

It follows that 
\[E(\C(G_v[F])) = \{C\in \G: v\not\in C\} \cup \{K\cup (C\setminus\{v\}): K\in \F, v\in C\in \G\}\,,\]
that is, $\C(G_v[F]) = \G_v\langle\F\rangle$, as claimed.
\end{proof}

\begin{lemma}\label{lem:substitution-conformal}
Let $\F$ and $\G$ be two Sperner hypergraphs with disjoint vertex sets and let $v\in V(\G)$.
Then $\G_v\langle\F\rangle$ is conformal if and only if $\F$ and $\G$ are conformal.
\end{lemma}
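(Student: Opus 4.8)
The plan is to verify both implications through Gilmore's three-hyperedge criterion (\Cref{conformal hypergraphs}), exploiting the product-like structure of $\cH := \G_v\langle\F\rangle$. The starting point is a clean decomposition of the hyperedges of $\cH$. Each hyperedge $e$ of $\cH$ is either a \emph{Type~I} edge $g\in E(\G)$ with $v\notin g$ (in which case $e\cap V(\F)=\emptyset$), or a \emph{Type~II} edge $f\cup(g\setminus\{v\})$ with $f\in E(\F)$ and $v\in g\in E(\G)$. In both cases $e$ determines a projection $g(e)\in E(\G)$, namely $g(e)=e$ in Type~I and $g(e)=(e\cap V(\G))\cup\{v\}$ in Type~II, together with an $\F$-part $f(e)=e\cap V(\F)$, which is a hyperedge of $\F$ exactly when $e$ is of Type~II. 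Since $V(\F)$ and $V(\G)$ are disjoint, for any two hyperedges one computes $e\cap e'=(f(e)\cap f(e'))\cup((g(e)\cap g(e'))\setminus\{v\})$. As a preliminary observation I would record the standing facts that $E(\F)$ and $E(\G)$ are nonempty, that $\F$ has no empty hyperedge (so every $f\in E(\F)$ contains a vertex), and that $v$ lies in some hyperedge of $\G$; these hold because all our hypergraphs have nonempty vertex sets with every vertex covered.

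For the direction that conformality of $\F$ and $\G$ implies conformality of $\cH$, I would take three hyperedges $e_1,e_2,e_3$ of $\cH$, set $g_i=g(e_i)$ and $f_i=f(e_i)$, and use the intersection formula to write $\bigcup_{i<j}(e_i\cap e_j)=U_\F\cup(U_\G\setminus\{v\})$ with $U_\F=\bigcup_{i<j}(f_i\cap f_j)$ and $U_\G=\bigcup_{i<j}(g_i\cap g_j)$. Conformality of $\G$ supplies $g\in E(\G)$ with $U_\G\subseteq g$. The argument then splits on how many of the $e_i$ are of Type~II (equivalently, on whether $v\in U_\G$): if at least two are, then $v\in g$, and I obtain an $\F$-hyperedge $f\supseteq U_\F$ either from conformality of $\F$ (three Type~II edges) or simply as $f=f_i$ for one index of the pair (two Type~II edges, where $U_\F=f_i\cap f_j\subseteq f_i$), so that $f\cup(g\setminus\{v\})$ is the desired hyperedge; if at most one $e_i$ is of Type~II, then $U_\F=\emptyset$ and $v\notin U_\G$, and $g$ itself works when $v\notin g$, while otherwise $f\cup(g\setminus\{v\})$ works for any $f\in E(\F)$.

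For the converse, conformality of $\cH$ must be transferred separately to $\F$ and to $\G$. The transfer to $\F$ is direct: given $f_1,f_2,f_3\in E(\F)$, fix one hyperedge $g$ of $\G$ containing $v$, form the Type~II edges $e_i=f_i\cup(g\setminus\{v\})$, and observe that $\bigcup_{i<j}(e_i\cap e_j)=\big(\bigcup_{i<j}(f_i\cap f_j)\big)\cup(g\setminus\{v\})$; a hyperedge $e$ of $\cH$ containing this union must, when the $\F$-part is nonempty, be of Type~II, and its $\F$-part is then a hyperedge of $\F$ containing $\bigcup_{i<j}(f_i\cap f_j)$. The transfer to $\G$ is where the only real subtlety lies, because the element $v$ is not a vertex of $\cH$ and so cannot be seen directly. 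The remedy I plan to use is to simulate $v$ by a fixed proxy: choose $f_0\in E(\F)$, and for $g_1,g_2,g_3\in E(\G)$ lift each $g_i$ to $e_i:=g_i$ if $v\notin g_i$ and to $e_i:=f_0\cup(g_i\setminus\{v\})$ if $v\in g_i$. A short computation shows that the $\F$-part of $\bigcup_{i<j}(e_i\cap e_j)$ equals $f_0$ precisely when $v$ lies in at least two of the $g_i$, that is, when $v\in W:=\bigcup_{i<j}(g_i\cap g_j)$, while its $\G$-part is always $W\setminus\{v\}$. Feeding this union into the conformality of $\cH$ yields a hyperedge $e\supseteq\bigcup_{i<j}(e_i\cap e_j)$, and the decomposition of $e$ recovers a hyperedge $g(e)\in E(\G)$ with $W\subseteq g(e)$: when $v\in W$ the nonempty $\F$-part forces $e$ to be of Type~II with $v\in g(e)$, capturing the element $v$, and when $v\notin W$ one checks that $W$ is contained in the $\G$-part of $e$ regardless of the type of $e$. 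I expect this proxy construction, and the bookkeeping of exactly when $v$ must appear in the target hyperedge, to be the main point requiring care; everything else reduces to the intersection formula and the case analysis above.
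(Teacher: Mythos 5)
Your proof is correct, and the comparison with the paper splits cleanly in two. Your backward direction is essentially the paper's argument: to recover conformality of $\F$ the paper likewise lifts $f_1,f_2,f_3$ along a fixed $g\in E(\G)$ with $v\in g$, and to recover conformality of $\G$ it uses exactly your proxy device, replacing $v$ by an arbitrary fixed $f\in E(\F)$ in those $g_i$ that contain $v$, then reading off the required hyperedge from Gilmore's criterion (\Cref{conformal hypergraphs}) applied to $\cH$. One small point in your favor: your case split on whether $v$ lies in at least two of the $g_i$ is tighter than the paper's split on $I=\emptyset$ versus $I\neq\emptyset$; the paper's intermediate claim that $\widehat{S}=(S\setminus\{v\})\cup f$ actually fails when $|I|=1$ (then $f$ occurs in only one lifted edge, hence in no pairwise intersection), a slip that is easily repaired --- in that case a Type~I witness contradicts the hypothesis outright --- and that your bookkeeping avoids. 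Your forward direction, by contrast, is genuinely different: the paper never touches Gilmore's criterion there, but instead invokes \Cref{Sperner-conformal} to realize the conformal Sperner hypergraphs $\F$ and $\G$ as clique hypergraphs of their co-occurrence graphs $F$ and $G$, and then quotes \Cref{lem:graphs-substitution-via-hypergraphs} to conclude that $\G_v\langle\F\rangle=\C(G_v[F])$ is a clique hypergraph and hence conformal. That reduction is three lines given machinery the paper has already built; your direct verification via the intersection formula and the dichotomy ``$v\in U_\G$ if and only if at least two of the $e_i$ are of Type~II'' is longer but self-contained, stays entirely at the hypergraph level, is structurally uniform with your converse, and makes visible that the only role of Sperner-ness in your argument is to guarantee that hyperedges of $\F$ are nonempty, so that the two types of hyperedges of $\cH$ cannot be confused. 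In short: the paper buys brevity by reusing \Cref{lem:graphs-substitution-via-hypergraphs}; you buy independence from it.
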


\begin{proof}
Assume first that $\F$ and $\G$ are conformal.
Let $F$ and $G$ be the co-occurrence graphs of $\F$ and $\G$, respectively.
Then $\F$ and $\G$ are the clique hypergraphs of $F$ and $G$, respectively.
By \Cref{lem:graphs-substitution-via-hypergraphs}, \[\G_v\langle\F\rangle = \C(G_v[F])\,.\]
In particular, $\G_v\langle\F\rangle$ is conformal. 

Assume now that $\cH = \G_v\langle\F\rangle$ is conformal.
We show that $\F$ and $\G$ are conformal by proving that they satisfy the condition given by the characterization of conformal hypergraphs stated in \Cref{conformal hypergraphs}.

\begin{claim}\label{claim1}
$\F$ is conformal.
\end{claim}

\begin{proof}[Proof of claim.]
Suppose for a contradiction that $\F$ is not conformal. 
Then, by \Cref{conformal hypergraphs}, there exist three hyperedges $f_1,f_2,f_3\in \F$ such that no hyperedge $f\in \F$ satisfies $S\subseteq f$, where \[S = (f_1\cap f_2)\cup (f_1\cap f_3)\cup (f_2\cap f_3)\,;\]
in particular, $S$ is nonempty.
Let $g$ be an arbitrary hyperedge in $\G$ such that $v\in g$.
Then, for each $i\in \{1,2,3\}$, the set $h_i = f_i\cup (g\setminus \{v\})$ is a hyperedge of~$\cH$.
Since $\cH$ is conformal, by \Cref{conformal hypergraphs}, $\cH$ contains a hyperedge $h$ such that 
\[(h_1\cap h_2)\cup (h_1\cap h_3)\cup (h_2\cap h_3)\subseteq h\,.\]
Note that 
\[(h_1\cap h_2)\cup (h_1\cap h_3)\cup (h_2\cap h_3) = S\cup (g\setminus\{v\})\,.\]
Thus, $S\subseteq h\cap V(\F)$, which implies that $h\cap V(\F)\neq \emptyset$.
It follows that there exists a hyperedge $f$ of $\F$ and a hyperedge $g$ of $\G$ such that $v\in g$ and $h = f \cup (g\setminus\{v\})$.
But this implies that $S\subseteq f$, a contradiction.
\end{proof}

\begin{claim}\label{claim2}
$\G$ is conformal.
\end{claim}

\begin{proof}[Proof of claim.]
Suppose for a contradiction that $\G$ is not conformal. 
Then, by \Cref{conformal hypergraphs}, there exist three hyperedges $g_1,g_2,g_3\in \G$ such that no hyperedge $g\in \G$ satisfies $S\subseteq g$, where \[S = (g_1\cap g_2)\cup (g_1\cap g_3)\cup (g_2\cap g_3)\,;\]
in particular, $S$ is nonempty.

Let $I = \{i\in \{1,2,3\}: v\in g_i\}$.
We consider two cases depending on whether $I$ is empty or not.

Assume first that $I = \emptyset$.
Note that for each $i\in \{1,2,3\}$, the set $g_i$ is a hyperedge of~$\cH$.
Since $\cH$ is conformal, by \Cref{conformal hypergraphs}, $\cH$ contains a hyperedge $h$ such that $S\subseteq h$.
Since we assumed that no hyperedge of $\G$ contains $S$, there exists a hyperedge $f$ of $\F$ and a hyperedge $g$ of $\G$ such that $v\in g$ and $h = f \cup (g\setminus\{v\})$.
Consequently, $S\subseteq h = f \cup (g\setminus\{v\})$ and since $S\cap f = \emptyset$, we obtain that $S\subseteq g$, a contradiction.

Assume now that $I \neq \emptyset$.
Let $f$ be an arbitrary hyperedge in~$\F$.
For $i\in I$, let $h_i = f\cup (g_i\setminus\{v\})$.
For $i\in \{1,2,3\}\setminus I$ (if any), let $h_i = g_i$.
Note that for each $i\in \{1,2,3\}$, the set $h_i$ is a hyperedge of~$\cH$.
Let 
\[\widehat{S} = (h_1\cap h_2)\cup (h_1\cap h_3)\cup (h_2\cap h_3)\,.\]
Note that $\widehat{S} = (S\setminus \{v\})\cup f$ (where, if $|I| = 1$, then $v\not\in S$ and $S\setminus \{v\} = S$).
Since $\cH$ is conformal, by \Cref{conformal hypergraphs}, $\cH$ contains a hyperedge $h$ such that $\widehat{S}\subseteq h$.
Since $f\subseteq \widehat{S}\subseteq h$, we infer that $h\cap V(\F)\neq \emptyset$.
It follows that there exists a hyperedge $\widehat{f}$ of $\F$ and a hyperedge $\widehat{g}$ of $\G$ such that $v\in \widehat{g}$ and $h = \widehat{f} \cup (\widehat{g}\setminus\{v\})$.
Since $\widehat{S}\subseteq h$ and $\widehat{S} = (S\setminus \{v\})\cup f$, we infer that 
$S\setminus \{v\}\subseteq h\setminus \{v\} = \widehat{g}\setminus\{v\}$ and consequently $S\subseteq \widehat{g}$, a contradiction with the assumption that no hyperedge of $\G$ contains~$S$.
\end{proof}

Claims~\ref{claim1} and \ref{claim2} complete the proof of the lemma.
\end{proof}

The following lemma is well known (see, e.g., Bioch~\cite{MR2154839} for the statement in the slightly more general setting of Boolean functions).

\begin{lemma}\label{lem:hypergraph-dualization-of-substitution}
For any two hypergraphs $\F$ and $\G$ with disjoint vertex sets and a vertex $v\in V(\G)$, we have 
\[\G_v\langle\F\rangle^d = \G^d_v\langle\F^d\rangle\,.\]
\end{lemma}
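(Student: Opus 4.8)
The plan is to prove the set equality $E(\G_v\langle\F\rangle^d) = E(\G^d_v\langle\F^d\rangle)$ directly, by describing the minimal transversals of $\G_v\langle\F\rangle$ explicitly and matching them, one family at a time, with the hyperedges of $\G^d_v\langle\F^d\rangle$. Both hypergraphs have vertex set $V(\F)\cup(V(\G)\setminus\{v\})$ (since $V(\F^d)=V(\F)$ and $V(\G^d)=V(\G)$), so it suffices to compare hyperedge sets. Throughout I would write $W = V(\F)$ and $U = V(\G)\setminus\{v\}$, and for a set $T$ of vertices of $\cH:=\G_v\langle\F\rangle$ put $T_W = T\cap W$ and $T_U = T\cap U$. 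Recall the two families of hyperedges of $\cH$: the edges $g\in E(\G)$ with $v\notin g$ (which lie in $U$), and the edges $f\cup(g\setminus\{v\})$ with $f\in E(\F)$ and $v\in g\in E(\G)$.

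The first step is a clean characterization of all (not necessarily minimal) transversals of $\cH$: I claim $T$ is a transversal of $\cH$ if and only if (i) $T_U\cup\{v\}$ is a transversal of $\G$, and (ii) $T_W$ is a transversal of $\F$ or $T_U$ is a transversal of $\G$. For the forward direction, condition (i) follows from the first family of hyperedges (edges avoiding $v$ must be met inside $U$, while edges through $v$ are met by $v$); for (ii), if $T_W$ misses some $f_0\in E(\F)$, then for every $g\ni v$ the hyperedge $f_0\cup(g\setminus\{v\})$ forces $T_U$ to meet $g\setminus\{v\}$, so $T_U$ meets every hyperedge of $\G$. The reverse direction is immediate from the same case split. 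This step is routine; it only uses that $\F$ is nonempty.

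The second step is to read off the minimal transversals from (i)--(ii) by distinguishing two cases. If $T_U$ is already a transversal of $\G$, then $T_U$ by itself is a transversal of $\cH$, so minimality forces $T_W=\emptyset$ and $T_U$ a minimal transversal of $\G$ avoiding $v$; these are exactly the hyperedges $t\in E(\G^d)$ with $v\notin t$, i.e.\ the hyperedges of $\G^d_v\langle\F^d\rangle$ inherited from $\G^d$. If $T_U$ is not a transversal of $\G$, then (ii) forces $T_W$ to be a transversal of $\F$ and (i) forces $T_U\cup\{v\}$ to be a transversal of $\G$; minimality then forces $T_W$ to be a minimal transversal of $\F$ and $T_U$ to be minimal subject to $T_U\cup\{v\}$ being a transversal of $\G$. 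The crux is the auxiliary equivalence that, when $T_U$ is not itself a transversal of $\G$, this minimality condition is the same as $t:=T_U\cup\{v\}$ being a minimal transversal of $\G$ containing $v$: if some $t'\subsetneq t$ were a transversal of $\G$, then either $v\notin t'$ (so $t'\subseteq T_U$ is a transversal, contradicting that $T_U$ is not), or $v\in t'$ (so $t'\setminus\{v\}\subsetneq T_U$ contradicts minimality of $T_U$). Hence these minimal transversals are exactly the sets $s\cup(t\setminus\{v\})$ with $s\in E(\F^d)$ and $v\in t\in E(\G^d)$, which are precisely the remaining hyperedges of $\G^d_v\langle\F^d\rangle$. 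Running the two cases in reverse --- verifying that each such set is in fact a minimal transversal of $\cH$, again via (i)--(ii) and the minimality of $s$ and $t$ --- gives the opposite inclusion and finishes the argument.

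The hard part will be the minimality bookkeeping in the second case, concentrated in the equivalence just displayed; everything else is a mechanical verification against the hyperedge description. One can sidestep the case analysis entirely by translating to monotone Boolean functions, which is the viewpoint behind the cited result of Bioch: hypergraph substitution at $v$ is function composition $\phi_\G\circ_v\phi_\F$, and the identity $(\phi\circ_v\psi)^d=\phi^d\circ_v\psi^d$ follows in one line from $\psi(\neg y)=\neg\psi^d(y)$. Passing back from functions to prime implicants, and using that the substitution $\G^d_v\langle\F^d\rangle$ of the Sperner hypergraphs $\G^d,\F^d$ is again Sperner (as noted just before \Cref{lem:graphs-substitution-via-hypergraphs}, so its hyperedges coincide with its minimal true points), yields the stated equality. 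I would present the direct combinatorial proof and relegate the Boolean reformulation to a remark.
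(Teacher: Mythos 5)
Your proposal is correct, and in fact it supplies something the paper itself does not: the paper states \Cref{lem:hypergraph-dualization-of-substitution} without proof, remarking only that it is well known and citing Bioch for the analogous statement about monotone Boolean functions. Your direct combinatorial argument is sound. The transversal characterization (i)--(ii) is easily verified in both directions; the case split on whether $T_U$ is a transversal of $\G$ cleanly separates the two families of hyperedges of $\G^d_v\langle\F^d\rangle$; and the crux equivalence (minimality of $T_U$ subject to $T_U\cup\{v\}$ covering $\G$, given that $T_U$ alone does not, is the same as $T_U\cup\{v\}$ being a minimal transversal of $\G$ through $v$) is proved correctly by the two-way case analysis on whether $v\in t'$. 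The reverse inclusion, which you only sketch, does go through exactly as you indicate: for $t\in E(\G^d)$ with $v\notin t$ one needs $E(\F)\neq\emptyset$ to rule out a proper sub-transversal with empty $W$-part (this is guaranteed by the paper's convention that every vertex lies in a hyperedge and vertex sets are nonempty, so your aside that the argument ``only uses that $\F$ is nonempty'' is exactly the right caveat); and for $s\cup(t\setminus\{v\})$, minimality of $s$ in $\F^d$ and of $t$ in $\G^d$ kill both subcases of condition (ii). Your closing remark is essentially Bioch's route: the one-line identity $(\phi\circ_v\psi)^d=\phi^d\circ_v\psi^d$ for monotone functions, together with the observation (made in the paper just before \Cref{lem:graphs-substitution-via-hypergraphs}) that substitution of Sperner hypergraphs is Sperner, so that prime implicants of the composed dual coincide with the hyperedges of $\G^d_v\langle\F^d\rangle$. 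So the comparison is: the paper outsources the lemma to the Boolean-function literature, your main proof makes it self-contained and elementary at the cost of the minimality bookkeeping, and your remark recovers the cited shortcut.
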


\Cref{lem:hypergraph-dualization-of-substitution,lem:graphs-substitution-via-hypergraphs} imply the following.

\begin{corollary}\label{cor:clique-dualization}
Let $F$ and $G$ be two graphs and $v\in V(G)$. 
Let $\F$ and $\G$ denote, respectively, the clique hypergraphs of $F$ and~$G$.
Then 
\[\C^d(G_v[F]) = \G^d_v\langle\F^d\rangle\,.\]
\end{corollary}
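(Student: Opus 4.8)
The plan is to unwind the definition of the clique-dual hypergraph and then simply compose the two preceding lemmas. First I would recall that, by definition, $\C^d(G_v[F])$ is the dual hypergraph of the clique hypergraph $\C(G_v[F])$. Applying \Cref{lem:graphs-substitution-via-hypergraphs} to the graphs $F$ and $G$ and the vertex $v$ rewrites this clique hypergraph as the hypergraph substitution $\G_v\langle\F\rangle$, where $\F$ and $\G$ are the clique hypergraphs of $F$ and $G$; hence $\C^d(G_v[F]) = (\G_v\langle\F\rangle)^d$.

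Next I would invoke \Cref{lem:hypergraph-dualization-of-substitution}, which commutes the dualization operator past the substitution operator, giving $(\G_v\langle\F\rangle)^d = \G^d_v\langle\F^d\rangle$. Chaining the two equalities yields $\C^d(G_v[F]) = \G^d_v\langle\F^d\rangle$, which is precisely the claimed identity.

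The only points to verify are that the hypotheses of the two lemmas are met, and these hold automatically. \Cref{lem:hypergraph-dualization-of-substitution} requires $\F$ and $\G$ to have disjoint vertex sets with $v\in V(\G)$; this is built into the setup, since the graph substitution $G_v[F]$ is formed from the disjoint union of $G-v$ and $F$, so that $V(\F)=V(F)$ and $V(\G)=V(G)$ are disjoint and $v\in V(G)\setminus V(F)$. Likewise, $\F$ and $\G$ are Sperner because clique hypergraphs of graphs consist of the inclusion-wise maximal cliques. Consequently there is essentially no obstacle here: the corollary is a one-line composition of \Cref{lem:graphs-substitution-via-hypergraphs} and \Cref{lem:hypergraph-dualization-of-substitution}, and the only care needed is in matching the vertex-set conventions so that both lemmas apply verbatim.
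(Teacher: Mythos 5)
Your proposal is correct and matches the paper's own argument exactly: the paper derives this corollary as an immediate consequence of \Cref{lem:graphs-substitution-via-hypergraphs} and \Cref{lem:hypergraph-dualization-of-substitution}, composed in precisely the way you describe. Your extra check that $\F$ and $\G$ are Sperner is harmless but unnecessary, since \Cref{lem:hypergraph-dualization-of-substitution} is stated for arbitrary hypergraphs with disjoint vertex sets.
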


We now have everything ready to prove the main result of this section.

\begin{theorem}\label{thm:substitution-CDC}
Let $F$ and $G$ be two graphs and $v\in V(G)$. 
Then the graph $G_v[F]$ is CDC if and only if $F$ and $G$ are CDC.
\end{theorem}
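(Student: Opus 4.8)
**

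The plan is to reduce the theorem to the three lemmas just established, essentially by chaining together the definition of CDC with the substitution-compatibility results for clique hypergraphs, duals, and conformality. Recall that $G_v[F]$ is CDC precisely when its clique hypergraph $\C(G_v[F])$ is dually conformal, i.e., when $\C^d(G_v[F])$ is conformal. The whole strategy is to rewrite this dual hypergraph in terms of the duals of the constituent clique hypergraphs and then apply the conformality characterization for substitutions.

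First I would set $\F = \C(F)$ and $\G = \C(G)$, and invoke \Cref{cor:clique-dualization} to obtain the key identity
\[
\C^d(G_v[F]) = \G^d_v\langle\F^d\rangle\,.
\]
Thus $G_v[F]$ is CDC if and only if $\G^d_v\langle\F^d\rangle$ is conformal. Now $\F^d$ and $\G^d$ are both Sperner (being duals), and they have disjoint vertex sets since $F$ and $G$ do. Hence \Cref{lem:substitution-conformal} applies directly, giving that $\G^d_v\langle\F^d\rangle$ is conformal if and only if both $\F^d = \C^d(F)$ and $\G^d = \C^d(G)$ are conformal. But $\C^d(F)$ being conformal is exactly the statement that $F$ is CDC, and likewise for $G$. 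Stringing these equivalences together yields: $G_v[F]$ is CDC $\iff$ $\G^d_v\langle\F^d\rangle$ is conformal $\iff$ ($\F^d$ conformal and $\G^d$ conformal) $\iff$ ($F$ CDC and $G$ CDC), which is exactly the claim.

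The one point that deserves care—and which I expect to be the only genuine obstacle—is verifying that the hypotheses of \Cref{lem:substitution-conformal} are met after dualization. That lemma requires $\F^d$ and $\G^d$ to be Sperner with disjoint vertex sets and that $v\in V(\G^d)$. Spernerness of duals is automatic by definition; disjointness of vertex sets follows since dualization preserves the vertex set, so $V(\F^d)=V(F)$ and $V(\G^d)=V(G)$ remain disjoint; and $v$ stays a vertex of $\G^d$ for the same reason. I would also note in passing the subtlety that the hypergraph substitution $\G^d_v\langle\F^d\rangle$ requires $v\in V(\G^d)$, which holds, and that \Cref{cor:clique-dualization} is exactly what licenses replacing the clique hypergraph of the \emph{graph} substitution $G_v[F]$ by the \emph{hypergraph} substitution of the duals—bridging the mismatch (flagged in the footnote) between graph and hypergraph substitution. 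Once these hypotheses are checked, the proof is a short and purely formal chain of equivalences, with all the real content already packaged into \Cref{cor:clique-dualization} and \Cref{lem:substitution-conformal}.
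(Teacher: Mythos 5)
Your proposal is correct and follows essentially the same route as the paper's own proof: both invoke \Cref{cor:clique-dualization} to identify $\C^d(G_v[F])$ with $\G^d_v\langle\F^d\rangle$ and then apply \Cref{lem:substitution-conformal} to the dual hypergraphs, chaining the resulting equivalences. Your additional verification of the hypotheses of \Cref{lem:substitution-conformal} (Spernerness of duals, disjointness and preservation of vertex sets) is a careful touch the paper leaves implicit, but it does not change the argument.
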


\begin{proof}
Let $\F$ and $\G$ denote, respectively, the clique hypergraphs of $F$ and~$G$.
By \Cref{cor:clique-dualization}, we have 
\[\C^d(G_v[F]) = \G^d_v\langle\F^d\rangle\,.\]
Note that the graph $G_v[F]$ is CDC if and only if the hypergraph $\C^d(G_v[F])$ is conformal.
By~\Cref{lem:substitution-conformal}, this latter condition is equivalent to the claim that $\F^d$ and $\G^d$ are conformal, which is in turn equivalent to the claim that $F$ and $G$ are CDC.
\end{proof}

Note that \Cref{thm:substitution-CDC} generalizes the result of \Cref{thm:P4-free}.
It is known that every $P_4$-free graph is either disconnected or the complement of a disconnected graph (this result has been rediscovered independently several times, see, e.g., Sumner~\cite{MR2620781}, Seinsche~\cite{MR0337679}, and Corneil, Lerchs, and Burlingham~\cite{MR0619603}; some further references can be found in Golumbic and Gurvich~\cite[Chapter 10]{zbMATH05852793}).
In particular, every $P_4$-graph is the result of a substitution operation from smaller $P_4$-free graphs, implying an inductive argument by \Cref{thm:substitution-CDC}.

\section{Triangle-free CDC graphs}
\label{sec:triangle-free-CDC}

In this section, we consider triangle-free graphs; for those, we better understand the structure of minimal clique transversals.
Indeed, if $G$ is a triangle-free graph without isolated vertices, then its maximal cliques are precisely its edges, and hence a set $S\subseteq V(G)$ is a minimal clique transversal in $G$ if and only if it is a  minimal vertex cover.
In particular, for such graphs \Cref{obs:CDC-via-Gc} can be restated in the following way.

\begin{observation}\label{triangle-free-CDC-observation}
Let $G$ be a triangle-free graph without isolated vertices.
Then $G$ is CDC if and only if the following two conditions hold.
\begin{enumerate}
  \item Every maximal clique in $G^c$ is a minimal vertex cover in~$G$.
  \item Every minimal vertex cover in $G$ is a maximal clique in $G^c$.
 \end{enumerate}
\end{observation}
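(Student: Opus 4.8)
The plan is to apply \Cref{obs:CDC-via-Gc} directly and specialize it to the triangle-free setting. Recall that \Cref{obs:CDC-via-Gc} states that $G$ is CDC if and only if the maximal cliques of $G^c$ are exactly the minimal clique transversals of $G$. The word \emph{exactly} here encodes a two-way set equality between the family of maximal cliques of $G^c$ and the family of minimal clique transversals of $G$, so the real work is to rewrite each of the two inclusions in the equivalent combinatorial language available for triangle-free graphs.

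The key structural observation to invoke is the one stated in the preamble of the section: if $G$ is triangle-free without isolated vertices, then its maximal cliques are precisely its edges, and consequently a set $S\subseteq V(G)$ is a minimal clique transversal in $G$ if and only if $S$ is a minimal vertex cover of $G$. First I would make this substitution: in the statement of \Cref{obs:CDC-via-Gc}, every occurrence of ``minimal clique transversal of $G$'' can be replaced by ``minimal vertex cover of $G$''. The phrase ``the maximal cliques of $G^c$ are exactly the minimal vertex covers of $G$'' then unpacks, by the definition of set equality between two families, into the two containments listed as Conditions~1 and~2: Condition~1 asserts that every maximal clique of $G^c$ belongs to the family of minimal vertex covers of $G$, and Condition~2 asserts the reverse, that every minimal vertex cover of $G$ is a maximal clique of $G^c$.

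Thus the proof is essentially a matter of citing \Cref{obs:CDC-via-Gc}, then translating via the vertex-cover characterization and splitting the equality of the two families into its two inclusions. There is no genuine obstacle here; the statement is a direct corollary rather than an independent result. The only point requiring a word of care is the hypothesis that $G$ has no isolated vertices, which is exactly what guarantees that the maximal cliques of $G$ are its edges (an isolated vertex would itself be a maximal clique, breaking the identification of minimal clique transversals with minimal vertex covers). Since this hypothesis is present in the statement, the equivalence goes through cleanly, and the whole argument amounts to a sentence invoking \Cref{obs:CDC-via-Gc} together with the remark that minimal clique transversals coincide with minimal vertex covers in triangle-free graphs without isolated vertices.
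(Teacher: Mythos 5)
Your proposal is correct and matches the paper's own reasoning exactly: the paper derives this observation as an immediate restatement of \Cref{obs:CDC-via-Gc}, using the fact that in a triangle-free graph without isolated vertices the maximal cliques are precisely the edges, so minimal clique transversals coincide with minimal vertex covers. Your attention to the role of the no-isolated-vertices hypothesis is also exactly the right point of care.
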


\begin{corollary}\label{cor:triangle-free-CDC}
Let $G$ be a triangle-free CDC graph without isolated vertices. 
Then, $G^{cc} = G$.
\end{corollary}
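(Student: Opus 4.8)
The plan is to observe that this corollary is an immediate special case of \Cref{prop:CDC-implies-2-cycle}, which already asserts that $G^{cc} = G$ for \emph{every} CDC graph $G$; in particular, neither triangle-freeness nor the absence of isolated vertices is actually needed, and the one-line proof is simply to invoke \Cref{prop:CDC-implies-2-cycle}. Nonetheless, to keep the section self-contained one can give a direct argument within the triangle-free setting, which I sketch next, since it reuses exactly the ingredients available here.

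First I would use \Cref{triangle-free-CDC-observation} to rephrase the hypothesis: for a triangle-free graph $G$ without isolated vertices, minimal clique transversals coincide with minimal vertex covers, so $G$ being CDC is equivalent to $\C(G^c) = \C^d(G)$. Next I would unwind the definition of the clique-dual of $G^c$: the graph $G^{cc} = (G^c)^c$ is the co-occurrence graph of the hypergraph of minimal clique transversals of $G^c$, i.e.\ of the minimal transversals of $\C(G^c)$. Substituting $\C(G^c) = \C^d(G)$ and invoking the duality involution $\C^{dd}(G) = \C(G)$ on Sperner hypergraphs (legitimate because clique hypergraphs are Sperner), these minimal transversals are precisely the hyperedges of $\C^{dd}(G) = \C(G)$, namely the maximal cliques of $G$.

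Finally I would note that the co-occurrence graph of $\C(G)$ is $G$ itself: two distinct vertices lie in a common maximal clique of $G$ if and only if they are adjacent in $G$, since every edge extends to a maximal clique. Hence $G^{cc}$ and $G$ share the same vertex set and the same edge set, giving $G^{cc} = G$. The only step warranting care is the appeal to the Sperner duality involution $\C^{dd} = \C$, but this is routine and causes no real obstacle, as the entire statement is in any case subsumed by \Cref{prop:CDC-implies-2-cycle}.
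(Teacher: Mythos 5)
Your proposal is correct and takes essentially the same route as the paper: the corollary is stated there without a separate proof precisely because it is an immediate special case of \Cref{prop:CDC-implies-2-cycle}, which gives $G^{cc}=G$ for every CDC graph, and this is exactly your one-line argument. Your supplementary ``direct'' derivation (using $\C(G^c)=\C^d(G)$, the Sperner duality $\C^{dd}(G)=\C(G)$, and the fact that the co-occurrence graph of $\C(G)$ is $G$ itself) merely re-proves \Cref{prop:CDC-implies-2-cycle}, so it is sound but not a genuinely different approach.
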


\subsection{Some sufficient and some necessary conditions}

We first develop a sufficient condition for a triangle-free graph to be CDC. 
The condition is based on the following notions.

\begin{definition}
An edge $\{u,v\}$ in a graph $G$ is said to be \emph{bisimplicial} if every vertex in $N(u)$ is adjacent to every vertex in $N(v)$.
A perfect matching $M$ in a graph $G$ is said to be \emph{bisimplicial} if all edges in $M$ are bisimplicial in~$G$.
\end{definition}

Note that an edge $\{u,v\}$ in a triangle-free graph $G$ is bisimplicial if and only if it is not the middle edge of any induced $P_4$ in~$G$.

A clique $C$ in a graph $G$ is said to be \emph{strong} if it intersects all maximal independent sets, or, equivalently, if there exists no independent set $I$ such that $I\subseteq N_G(C)$ and $C\subseteq N_G(I)$.
It is not difficult to see that in a triangle-free graph, strong cliques are precisely the isolated vertices and the bisimplicial edges.
For later use, we state this observation explicitly.

\begin{observation}\label{obs:bisimplicial}
Let $G$ be a triangle-free graph and let $\{u,v\}$ be a bisimplicial edge in~$G$.
Then, every maximal independent set contains either $u$ or~$v$.
\end{observation}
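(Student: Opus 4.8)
The plan is to prove \Cref{obs:bisimplicial} directly from the definition of a bisimplicial edge, using the triangle-free hypothesis crucially. Let $G$ be triangle-free and let $\{u,v\}$ be a bisimplicial edge, so every vertex in $N(u)$ is adjacent to every vertex in $N(v)$. Fix a maximal independent set $I$ and suppose for contradiction that $I$ contains neither $u$ nor $v$. By maximality of $I$, the vertex $u$ must have a neighbor in $I$; call it $a\in N(u)\cap I$. Likewise $v$ has a neighbor $b\in N(v)\cap I$.

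First I would observe that, because $\{u,v\}$ is bisimplicial, every vertex of $N(u)$ is adjacent to every vertex of $N(v)$; in particular $a$ (which lies in $N(u)$) is adjacent to $b$ (which lies in $N(v)$), \emph{unless} $a=b$. But $a$ and $b$ both lie in the independent set $I$, so they cannot be adjacent; hence we are forced to conclude $a=b$. This common vertex $w:=a=b$ then lies in $N(u)\cap N(v)$, i.e.\ $w$ is adjacent to both $u$ and $v$. Since $u$ and $v$ are themselves adjacent (they form an edge), the three vertices $u,v,w$ are pairwise adjacent, forming a triangle. This contradicts the assumption that $G$ is triangle-free. Therefore $I$ contains at least one of $u$ or $v$, as claimed.

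The main subtlety — and the only place requiring care — is the handling of the possible coincidence $a=b$. The bisimplicial condition only asserts adjacency between \emph{distinct} vertices of $N(u)$ and $N(v)$ (indeed, a single vertex in $N(u)\cap N(v)$ is not adjacent to itself), so one cannot simply quote ``$a$ adjacent to $b$'' and derive an edge inside $I$. The argument splits cleanly into the two cases $a\neq b$ and $a=b$: the first directly violates the independence of $I$ via the bisimplicial adjacency, and the second produces the forbidden triangle. Both cases yield a contradiction, which is exactly what makes the triangle-free hypothesis indispensable — without it, the coincidence case would not close. This dichotomy is the entire content of the proof, and I expect writing it carefully (rather than eliding the $a=b$ possibility) to be the only real obstacle.

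One may also phrase the same reasoning in the contrapositive spirit suggested by the remark preceding the observation, namely that in a triangle-free graph a strong clique is precisely an isolated vertex or a bisimplicial edge, and a strong clique by definition meets every maximal independent set. However, for a self-contained proof the direct argument above is cleaner, and I would present it that way rather than invoking the characterization of strong cliques, since that characterization is stated only informally (``It is not difficult to see'') and not proved in the excerpt.
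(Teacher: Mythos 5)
Your proof is correct and follows essentially the same route as the paper's: both argue by contradiction, use maximality of $I$ to extract neighbors $a\in N(u)\cap I$ and $b\in N(v)\cap I$, dispose of the coincidence $a=b$ via triangle-freeness, and derive the contradiction in the case $a\neq b$ from bisimpliciality together with the independence of $I$. The only cosmetic difference is that the paper packages that last contradiction as an induced $P_4$ with middle edge $\{u,v\}$, while you invoke the definition of bisimplicial directly; your careful treatment of the $a=b$ case is exactly what the paper does when it notes $u'\neq v'$ by triangle-freeness.
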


\begin{proof}
Suppose for a contradiction that there exists a maximal independent set $I$ in $G$ such that $u\not\in I$ and $v\not\in I$.
By the maximality of $I$, each of $u$ and $v$ have a neighbor in $I$, say $u'$ and $v'$, respectively.
By the triangle-freeness, $u'\neq v'$.
But then, $u'$--$u$--$v$--$v'$ is an induced $P_4$ in $G$ having $\{u,v\}$ as a middle edge, contradicting the assumption that $\{u,v\}$ is bisimplicial in~$G$. 
\end{proof}

Applying the subtransversal criterion (\Cref{subtransversal-characterization}) to the case of triangle-free graphs yields the following.

\begin{observation}\label{triangle-free-G^c}
Let $G$ be a triangle-free graph and $u,v\in V(G)$ be two distinct vertices.
Then, $u$ and $v$ are adjacent in $G^c$ if and only if there exist two vertices $u'\in N_G(u)$ and $v'\in N_G(v)$ such that either $u' = v'$ or $u'$ is not adjacent to $v'$ in~$G$.
\end{observation}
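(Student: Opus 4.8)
The plan is to reinterpret adjacency in $G^c$ as a subtransversal condition and then invoke the subtransversal criterion (\Cref{subtransversal-characterization}). By the definition of the clique-dual, two distinct vertices $u,v$ are adjacent in $G^c$ precisely when they lie in a common minimal clique transversal of $G$, that is, when the two-element set $S=\{u,v\}$ is a subtransversal of $\C(G)$. Since $G$ is triangle-free (and, as throughout this section, has no isolated vertices), its maximal cliques are exactly its edges; this is the single place where triangle-freeness is used, and it is what lets me describe the hyperedges of $\C(G)$ meeting $S$ in exactly one vertex concretely.

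Next I would specialize \Cref{subtransversal-characterization} to $S=\{u,v\}$. The hyperedges in $E_u(S)$ are the maximal cliques meeting $S$ only in $u$; since these are edges, they are exactly the pairs $\{u,u'\}$ with $u'\in N_G(u)\setminus\{v\}$, and symmetrically $E_v(S)$ consists of the edges $\{v,v'\}$ with $v'\in N_G(v)\setminus\{u\}$. The criterion then states that $S$ is a subtransversal if and only if one can choose such $u'$ and $v'$ so that $(\{u,u'\}\cup\{v,v'\})\setminus\{u,v\}=\{u',v'\}$ contains no maximal clique of $G$. Because $u'$ and $v'$ each have a neighbor (namely $u$, resp.\ $v$), neither is isolated, so the only way $\{u',v'\}$ could contain a maximal clique is if $u'\neq v'$ and $\{u',v'\}$ is an edge. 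Hence $\{u',v'\}$ contains no maximal clique exactly when $u'=v'$ or $u'$ is non-adjacent to $v'$, which is precisely the asserted condition.

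Finally I would reconcile the side constraints $u'\neq v$ and $v'\neq u$, which come from the definitions of $E_u(S)$ and $E_v(S)$, with the cleaner condition in the statement, which omits them. This is a short check: if $u'=v'$, then $u'$ is a common neighbor of $u$ and $v$, so $u'\notin\{u,v\}$ automatically, whence also $v'=u'\neq u$; and if instead $u'\not\sim v'$, then $u'=v$ would force $v\not\sim v'$, contradicting $v'\in N_G(v)$, while $v'=u$ would force $u'\not\sim u$, contradicting $u'\in N_G(u)$. Thus the side constraints hold whenever the stated condition does, giving the equivalence. I expect the only real subtlety to be this bookkeeping around degenerate coincidences of $u',v'$ with $u,v$, together with the reliance on the absence of isolated vertices: without it the singleton maximal clique $\{u\}$ would make $\{u,v\}$ a subtransversal even when $N_G(u)=\emptyset$, so the statement really needs $u$ and $v$ non-isolated. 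The remainder is a direct unwinding of the subtransversal criterion.
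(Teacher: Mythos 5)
Your proof is correct and takes essentially the same approach as the paper: the paper states this observation without a separate proof, presenting it precisely as an application of the subtransversal criterion (\Cref{subtransversal-characterization}) to the triangle-free case, which is exactly what you carry out. Your additional bookkeeping—ruling out the degenerate coincidences $u'=v$ and $v'=u$, and noting that isolated vertices must be excluded (consistent with the standing assumption at the start of \Cref{sec:triangle-free-CDC})—correctly supplies the details the paper leaves implicit.
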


The following result provides several properties of triangle-free graphs containing a bisimplicial perfect matching, including a sufficient condition for a triangle-free graph to be CDC.

\begin{theorem}\label{triangle-free-with-bpm}
Let $G=(V,E)$ be a triangle-free graph that has a bisimplicial perfect matching.
Then the following holds.
\begin{enumerate}
\item $G^c$ is isomorphic to $\overline{G}$.
\item $G$ and $\overline{G}$ are CDC.
\end{enumerate}
\end{theorem}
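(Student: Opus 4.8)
The plan is to prove the two parts in order, with part~(1) doing most of the work and part~(2) following quickly from earlier results. Let $M = \{u_iv_i : 1\le i\le m\}$ be a bisimplicial perfect matching of $G$, so that $V = \{u_1,v_1,\ldots,u_m,v_m\}$. For part~(1) I want to exhibit an explicit isomorphism between $G^c$ and $\overline{G}$; the natural guess is that $G^c = \overline{G}$ outright (equality, not merely isomorphism), or at worst that swapping within matched pairs gives the isomorphism. First I would compute the edge set of $G^c$ using the subtransversal criterion as packaged in \Cref{triangle-free-G^c}: two distinct vertices $x,y$ are adjacent in $G^c$ iff there exist $x'\in N_G(x)$ and $y'\in N_G(y)$ with $x'=y'$ or $x'y'\notin E(G)$. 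So the core of part~(1) is a vertex-pair case analysis verifying that $xy\in E(G^c)\iff xy\notin E(G)$.

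The decisive step is showing adjacency in $G^c$ fails exactly for the edges of $G$. So I would take a pair $x,y$ and split on whether $xy\in E(G)$. If $xy\notin E(G)$, I need to produce the witnesses $x',y'$; here I would use that $G$ has no isolated vertices (the perfect matching guarantees this), pick neighbors $x'\in N_G(x)$, $y'\in N_G(y)$, and argue that triangle-freeness forces a non-edge or coincidence among the neighbors — using bisimpliciality of $M$ to rule out the bad configuration where all candidate neighbors are mutually adjacent. If $xy\in E(G)$, I must show \emph{no} such witnesses exist: suppose $x'\in N_G(x)$, $y'\in N_G(y)$. If $x'=y'$ then $x,x',y$ would form a triangle, impossible. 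If $x'\ne y'$ and $x'y'\notin E(G)$, then $x'$--$x$--$y$--$y'$ would be an induced $P_4$ with middle edge $xy$; but every edge of a triangle-free graph with a bisimplicial perfect matching is itself bisimplicial (a matched edge by hypothesis, and for a non-matched edge $xy$ one exploits that $x,y$ lie in matched pairs), contradicting the characterization of bisimplicial edges as those that are not the middle of an induced $P_4$. The main obstacle I anticipate is this last point: I must confirm that \emph{every} edge of $G$, not only the matching edges, is bisimplicial. I would prove this as a separate small lemma, arguing that if some edge $xy$ were the middle of an induced $P_4$, then tracing through the matched partners of the four vertices and invoking bisimpliciality of $M$ together with triangle-freeness yields a contradiction.

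Once the edge sets coincide (giving $G^c=\overline{G}$, hence certainly $G^c\cong\overline{G}$), part~(1) is complete. For part~(2), I would first observe that $\overline{G}$ is CDC: since $G^c\cong\overline{G}$, \Cref{obs:CDC-via-Gc} reduces CDC-ness of $G$ to showing that the maximal cliques of $G^c=\overline{G}$ are exactly the minimal clique transversals of $G$, i.e.\ (by the opening remarks of \Cref{sec:triangle-free-CDC}) the minimal vertex covers of $G$. But maximal cliques of $\overline{G}$ are maximal independent sets of $G$, and these are precisely the complements of minimal vertex covers; checking the two inclusions of \Cref{triangle-free-CDC-observation} then shows $G$ is CDC. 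Finally, because $G$ is CDC, \Cref{prop:CDC-implies-2-cycle} gives that $G^c$ is also CDC, and since $G^c\cong\overline{G}$, the graph $\overline{G}$ is CDC as well. I expect part~(2) to be essentially bookkeeping once part~(1) is in hand; the real content is the structural argument that bisimpliciality propagates to all edges, which drives the $G^c=\overline{G}$ identity.
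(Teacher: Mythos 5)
Your proposal fails at its decisive step. The lemma you defer to the end --- that in a triangle-free graph with a bisimplicial perfect matching \emph{every} edge is bisimplicial --- is false, and with it the claim $G^c=\overline{G}$. The smallest counterexample is $G=P_4$ itself, with vertices $a$--$b$--$c$--$d$: the matching $M=\{ab,cd\}$ is a bisimplicial perfect matching (indeed $N(a)=\{b\}$, $N(b)=\{a,c\}$, and $b$ is adjacent to both $a$ and $c$), yet the middle edge $bc$ is not bisimplicial, being the middle edge of the induced $P_4$ $a$--$b$--$c$--$d$. Consequently $b$ and $c$, although adjacent in $G$, are also adjacent in $G^c$: the witnesses $a\in N(b)$, $d\in N(c)$ with $ad\notin E(G)$ satisfy \Cref{triangle-free-G^c}; concretely, $\{b,c\}$ is a minimal vertex cover of $P_4$. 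So $E(G^c)\cap E(G)\neq\emptyset$ and $G^c\neq\overline{G}$. This is not an accident of the example: by \Cref{thm:clique-dual-complementary}, $G^c=\overline{G}$ holds \emph{only} for $P_4$-free graphs, so your equality claim can never hold for any graph in the theorem's hypothesis containing an induced $P_4$ --- which includes $P_4$, all coronas $H\odot K_1$ of graphs $H$ with at least one edge (the main application in \Cref{cor:corona}), and the graphs of \Cref{ex:second-family}.

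The fallback you mention in passing --- swapping within matched pairs --- is exactly the paper's isomorphism, but your proposal never develops it, and it needs a different case analysis from the one you outline. The paper takes $f(v)=v'$ (the matching partner) and proves $uv\in E(G^c)$ if and only if $u'v'\notin E(G)$; the nontrivial direction again rules out bad witness configurations via induced $P_4$'s, but the contradictions are derived against the \emph{matching} edges $\{u,u'\}$ and $\{v,v'\}$ only, which is all the hypothesis supplies. Your part (2) inherits the same defect: if one had $G^c=\overline{G}$, the maximal cliques of $G^c$ would be the maximal independent sets of $G$, and these are the \emph{complements} of minimal vertex covers, not minimal vertex covers themselves (in $P_4$, the maximal independent set $\{a,d\}$ is not even a vertex cover), so the two inclusions of \Cref{triangle-free-CDC-observation} would fail rather than follow. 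What makes both inclusions work in the paper is precisely the twist by $f$ together with \Cref{obs:bisimplicial}: every maximal independent set $I$ contains exactly one endpoint of each matching edge, hence $f(I)=V\setminus I$, so applying the isomorphism $f$ converts maximal cliques of $G^c$ into maximal independent sets of $G$ whose complements they are, i.e., into the minimal vertex covers required.
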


\begin{proof}
Let $M$ be a bisimplicial perfect matching in~$G$.
For each vertex $v\in V(G)$, let us denote by $v'$ the unique neighbor of $v$ such that $\{v,v'\}\in M$.
Consider the mapping $f:V\to V$ that maps each vertex $v\in V$ to the vertex $v'$.
It is clear that $f$ maps the vertex set of $G$ bijectively to itself.
We claim that $f$ is in fact a graph isomorphism from $G^c$ to $\overline{G}$, the complement of~$G$.

Let $u$ and $v$ be two distinct vertices of~$G$. Note that $u\neq v$ implies that $u'\neq v'$.
We need to show that the vertices $u$ and $v$ are adjacent in $G^c$ if and only if the vertices $f(u) = u'$ and $f(v) = v'$ are not adjacent in~$G$.
First, assume that $u'$ and $v'$ are not adjacent in~$G$.
Since $\{u,u'\}$ and $\{v,v'\}$ are edges of the matching $M$ in $G$, 
we have $u'\in N_G(u)$ and $v'\in N_G(v)$.
Thus $u$ and $v$ are adjacent in $G^c$ by \Cref{triangle-free-G^c}.  

Second, assume that $u$ and $v$ are adjacent in $G^c$.
Since $u$ and $v$ are adjacent in $G^c$, we infer by \Cref{triangle-free-G^c} that there exist two (not necessarily distinct) vertices
$u_1\in N_G(u)$ and $v_1\in N_G(v)$ such that $u_1$ is not adjacent to $v_1$ in~$G$.
Note that $u_1\neq v$ since otherwise $u_1$ would be adjacent to~$v_1$.
Similarly, $v_1\neq u$.
Suppose for a contradiction that the vertices $f(u) = u'$ and $f(v) = v'$ are adjacent in~$G$.
Since vertices $u_1$ and $v_1$ are non-adjacent in $G$ but $u'$ and $v'$ are, we cannot have $u_1 = u'$ and $v_1 = v'$.
By symmetry, we may assume that $u_1\neq u'$.
If $v_1 = v'$, then $u'$ is adjacent to $v_1$ and $u_1$--$u$--$u'$--$v_1$ is an induced $P_4$ in $G$ having $\{u,u'\}$ as the middle edge, contradicting the fact that $\{u,u'\}$ is bisimplicial in~$G$.
Thus, $v_1\neq v'$.
If $u = v'$, then $u' = v$ and the fact that $u_1$ is not adjacent to $v_1$ in $G$ would contradict the assumption that the edge $\{u,u'\} = \{u,v\}$ is bisimplicial in~$G$. Thus, $u\neq v'$ and, similarly, $u'\neq v$.
It follows that $u_1$--$u$--$u'$--$v'$--$v$--$v_1$ is a walk in $G$ in which every three consecutive vertices are pairwise distinct, and, moreover, 
$u\neq v$, $u_1\neq v$, and $u\neq v_1$.
The fact that $G$ is triangle-free implies that $u_1\neq v'$ and $u'\neq v_1$.
Thus, the only possible remaining equality between vertices of this walk is that $u_1 = v_1$.
Since the edge $\{v,v'\}$ is bisimplicial in $G$, we infer that $u'$ is adjacent to~$v_1$.
Thus, we must have that vertices $u_1$ and $v_1$ are distinct, since otherwise $\{u_1,u,u'\}$ would induce a triangle in~$G$.
But now, the path $u_1$--$u$--$u'$--$v_1$ is an induced $P_4$ in $G$ having $\{u,u'\}$ as the middle edge.
This contradicts the fact that $\{u,u'\}$ is bisimplicial in~$G$.
We have shown that $f$ is a graph isomorphism from $G^c$ to $\overline{G}$.

Next, we show that $G$ is CDC by showing that both conditions from \Cref{triangle-free-CDC-observation} are satisfied:
(i) every maximal clique in $G^c$ is a minimal vertex cover in $G$, and
(ii) every minimal vertex cover in $G$ is a maximal clique in $G^c$.

Consider first a maximal clique $C$ in $G^c$.
Since $f$ is an isomorphism from $G^c$ to $\overline{G}$, the set $f(C) = \{v':v\in C\}$ is a maximal clique in $\overline{G}$, and hence a maximal independent set in~$G$.
Since $M$ is a bisimplicial matching in $G$, \Cref{obs:bisimplicial} implies that the set $f(C)$ contains exactly one vertex from each edge in~$M$.
It follows that $C = V\setminus f(C)$ and hence $C$ is a minimal vertex cover in~$G$. 

Conversely, let $C$ be a minimal vertex cover in $G$ and let $I = V\setminus C$.
Then $I$ is a maximal independent set in $G$ and therefore contains exactly one vertex from each edge in~$M$.
It follows that $C = V\setminus I = f(I)$.
Since $I$ is a maximal clique in $\overline{G}$ and $f^{-1} = f$ is an isomorphism from $\overline{G}$ to $G^c$, we infer that $C = f(I) = \{v':v\in I\}$ is a maximal clique in $G^c$.
This shows that $G$ is CDC.

Finally, since $G$ is CDC and $\overline{G}$ is isomorphic to $G^c$, \Cref{prop:CDC-implies-2-cycle} implies that $\overline{G}$ is CDC.
This completes the proof.
\end{proof}

Recall that \Cref{ex:second-family,ex:third-family} are related to \Cref{triangle-free-with-bpm}.
We now explain this connection in more detail.

The graphs constructed in \Cref{ex:second-family} (see \Cref{fig:family-2}) are triangle-free graphs admitting a bisimplicial perfect matching.
For example, in the graph $G$ depicted in \Cref{fig:family-2} a bisimplicial perfect matching is formed by the edges  $\{u_i,v_{i}\}$ for $0\le i\le n$ (in the concrete example we have $n = 4$).
In fact, those graphs are a special case of the following construction.
Given a graph $H=(V,E)$, we denote by $H\odot K_1$ the \emph{corona} of $H$, that is, the graph obtained from $H$ by adding a pendant edge to each vertex.
Formally, $V(H\circ K_1) = V\cup \widehat V$ where $\widehat V = \{\widehat v:v\in V\}$ is a set of $|V|$ new vertices, and $E(H\circ K_1) = E\cup \{v \widehat v:v\in V\}$.
For any triangle-free graph $H$, the pendant edges added to $H$ to form its corona form a bisimplicial perfect matching in the graph $H\odot K_1$.
Therefore, every such graph, as well as its complement, are CDC.

\begin{corollary}\label{cor:corona}
The corona $G$ of a triangle-free graph and its complement $\overline{G}$ are CDC.
\end{corollary}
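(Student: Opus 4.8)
The plan is to derive \Cref{cor:corona} directly from \Cref{triangle-free-with-bpm} by verifying that the corona $G = H \odot K_1$ of a triangle-free graph $H$ is itself triangle-free and possesses a bisimplicial perfect matching. First I would observe that the set of pendant edges $\{v\widehat v : v \in V(H)\}$ is a perfect matching $M$ in $G$: every original vertex $v$ is matched to its pendant copy $\widehat v$, and every pendant vertex $\widehat v$ has $v$ as its unique neighbor, so $M$ covers $V(G) = V(H) \cup \widehat V$ with disjoint edges.

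Next I would check that $G$ is triangle-free. A triangle in $G$ would have to lie entirely within the non-pendant part $V(H)$, since each pendant vertex $\widehat v$ has degree one; but the subgraph of $G$ induced by $V(H)$ is exactly $H$, which is triangle-free by hypothesis. Hence $G$ is triangle-free, so \Cref{obs:bisimplicial} and the characterization of strong cliques in triangle-free graphs apply.

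The key step is to show that each pendant edge $\{v, \widehat v\}$ is bisimplicial, i.e.\ that every neighbor of $v$ is adjacent to every neighbor of $\widehat v$. This follows from the fact that $\widehat v$ has no neighbor other than $v$: its only neighbor is $v$ itself, which already belongs to the edge, so there is no pair of neighbors outside the edge that could fail to be adjacent. Equivalently, using the remark after the definition that in a triangle-free graph an edge is bisimplicial if and only if it is not the middle edge of any induced $P_4$, one checks that a pendant edge can never be the middle edge of a $P_4$, since the pendant endpoint has no second neighbor to extend the path. Thus $M$ is a bisimplicial perfect matching in $G$.

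With these facts in place, \Cref{triangle-free-with-bpm} immediately yields that both $G$ and $\overline{G}$ are CDC, completing the proof. I do not anticipate a serious obstacle here; the only point requiring a little care is confirming the bisimpliciality condition degenerates correctly when one endpoint of the matching edge has a singleton neighborhood, but this is precisely the boundary case the definition handles vacuously.
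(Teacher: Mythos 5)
Your proof is correct and follows essentially the same route as the paper: the paper also observes that the pendant edges of the corona of a triangle-free graph form a bisimplicial perfect matching and then invokes \Cref{triangle-free-with-bpm}. Your write-up merely fills in the routine verifications (triangle-freeness of the corona and the vacuous bisimpliciality of pendant edges) that the paper leaves implicit.
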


Regarding the graphs constructed in \Cref{ex:third-family}, their clique-duals are triangle-free graphs admitting a bisimplicial perfect matching.
For example, in the graph $G^c$ depicted in \Cref{fig:family-3} a bisimplicial perfect matching is formed by the edges  $\{v_i,v_{n+i}\}$ for $1\le i\le n$ (in the concrete example we have $n = 5$).

Next, we formulate a necessary condition for a triangle-free graph to be CDC.

\begin{lemma}\label{triangle-free-CDC-1}
Let $G$ be a triangle-free CDC graph without isolated vertices. 
Then every vertex of $G$ is an endpoint of a bisimplicial edge.
\end{lemma}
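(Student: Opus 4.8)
The plan is to argue by contrapositive: I would show that if $G$ has a vertex $u$ that is not the endpoint of any bisimplicial edge, then $G$ is not CDC, by exhibiting a violation of one of the two conditions in \Cref{triangle-free-CDC-observation}. Since $G$ has no isolated vertices, $u$ has at least one neighbor, and the hypothesis means that for \emph{every} neighbor $w$ of $u$, the edge $\{u,w\}$ fails to be bisimplicial. By the remark following the definition of bisimplicial, in a triangle-free graph this means every edge $\{u,w\}$ incident to $u$ is the middle edge of some induced $P_4$. The strategy is to use these induced $P_4$'s to construct either a minimal vertex cover of $G$ that is not a maximal clique of $G^c$, or a maximal clique of $G^c$ that is not a minimal vertex cover of $G$.

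The key computational tool is \Cref{triangle-free-G^c}, which describes adjacency in $G^c$ purely in terms of $G$: two vertices $x,y$ are adjacent in $G^c$ iff there exist $x'\in N_G(x)$, $y'\in N_G(y)$ with $x'=y'$ or $x'\not\sim y'$. First I would analyze the local structure at $u$. Pick any neighbor $w$ of $u$; since $\{u,w\}$ is the middle edge of an induced $P_4$, there are vertices $a\in N_G(u)\setminus N_G[w]$ and $b\in N_G(w)\setminus N_G[u]$ with $a\not\sim b$, giving the path $a$--$u$--$w$--$b$. I expect the cleanest route is to show that $u$ and $w$ are \emph{non}-adjacent in $G^c$, or more precisely to locate two vertices that a minimal vertex cover is forced to separate but that $G^c$-adjacency forces together (or vice versa). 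Concretely, I would look for a maximal independent set $I$ of $G$ with $u,w\notin I$ (which exists precisely because $\{u,w\}$ is not bisimplicial, contrapositive of \Cref{obs:bisimplicial}); then the complementary minimal vertex cover $C=V\setminus I$ contains both $u$ and $w$, and I would try to show $C$ is \emph{not} a clique in $G^c$, contradicting condition~(2) of \Cref{triangle-free-CDC-observation}.

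The technical heart, which I expect to be the main obstacle, is verifying that the candidate minimal vertex cover $C$ actually contains a pair of vertices non-adjacent in $G^c$. It is not enough that $u,w\in C$; I must certify via \Cref{triangle-free-G^c} that some pair in $C$ has no common neighbor and no pair of non-adjacent neighbors — a statement about the \emph{absence} of certain configurations, which is harder to establish than their presence. The delicate point is choosing the maximal independent set $I$ (equivalently, the vertex cover $C$) so that two specific vertices, presumably $u$ and $w$ themselves, end up non-adjacent in $G^c$; this requires controlling the neighborhoods $N_G(u)$ and $N_G(w)$ relative to $I$. I would handle this by building $I$ greedily starting from the endpoints $a,b$ of the induced $P_4$, ensuring that every neighbor of $u$ and every neighbor of $w$ is either in $I$ or adjacent in $G$ to the relevant witnesses, so that the subtransversal-style condition of \Cref{triangle-free-G^c} cannot be met for the pair $\{u,w\}$; triangle-freeness will be used repeatedly to rule out unwanted adjacencies. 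If this direct construction resists, the fallback is to instead violate condition~(1) by producing a maximal clique of $G^c$ through $u$ that strictly contains, or is incomparable to, every minimal vertex cover, again reading off adjacencies via \Cref{triangle-free-G^c}.
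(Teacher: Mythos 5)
Your primary strategy cannot work, for a reason built into the very definition of $G^c$. Two distinct vertices are adjacent in $G^c$ precisely when they lie in a common minimal clique transversal of $G$; for a triangle-free graph without isolated vertices these are exactly the minimal vertex covers. Your set $C=V\setminus I$ is itself a minimal vertex cover, so any two of its vertices lie in a common minimal vertex cover (namely $C$ itself), and hence $C$ is \emph{automatically} a clique in $G^c$ --- there is nothing to refute. In particular, condition~(2) of \Cref{triangle-free-CDC-observation} can never fail because a minimal vertex cover is not a clique in $G^c$; it can only fail through lack of maximality. The same point shows concretely why the hoped-for non-adjacency of $u$ and $w$ is impossible: the induced $P_4$, say $a$--$u$--$w$--$b$, witnessing that $\{u,w\}$ is not bisimplicial hands you $a\in N_G(u)$ and $b\in N_G(w)$ with $a$ non-adjacent to $b$, which by \Cref{triangle-free-G^c} certifies that $u$ and $w$ \emph{are} adjacent in $G^c$. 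So the ``technical heart'' you flag is not merely delicate; it is a dead end. (A smaller slip: the existence of a maximal independent set avoiding both $u$ and $w$ is the \emph{converse} of \Cref{obs:bisimplicial}, not its contrapositive; it is true, and follows by extending $\{a,b\}$ to a maximal independent set.)

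Your fallback --- violating condition~(1) --- is the right route and is the one the paper takes, but your sketch is missing its key construction. Since $u$ lies in no bisimplicial edge, \emph{every} edge at $u$ is the middle edge of an induced $P_4$; by the computation above this makes $u$ adjacent in $G^c$ to each of its $G$-neighbors, and any two $G$-neighbors of $u$ are adjacent in $G^c$ because they have the common neighbor $u$. Hence $N_G[u]$ is a clique in $G^c$. Extend it to a maximal clique $C$ of $G^c$. If $C$ is a vertex cover of $G$ at all, then $C\setminus\{u\}$ is still one, since every edge at $u$ has its other endpoint in $N_G(u)\subseteq C$; so $C$ is not a \emph{minimal} vertex cover. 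Thus condition~(1) of \Cref{triangle-free-CDC-observation} fails, contradicting the assumption that $G$ is CDC. This redundancy-of-$u$ argument is the single idea your proposal lacks, and without it neither of your two routes closes.
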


\begin{proof}
Suppose for a contradiction that there exists a vertex $v\in V$ that is not contained in any bisimplicial edge.
First, we show that $N_G[v]$ is a clique in $G^c$.
Any two vertices $u,w\in N_G(v)$ have $v$ as a common neighbor and hence, by \Cref{triangle-free-G^c}, they are adjacent in $G^c$.
Furthermore, for every neighbor $w$ of $v$, since the edge $\{v,w\}$ is not bisimplicial, it is the middle edge of an induced $P_4$ in $G$, say $x$--$v$--$w$--$y$. 
Then we can again apply \Cref{triangle-free-G^c} to infer that $v$ and $w$ are adjacent in $G^c$.
Thus, $N_G[v]$ is a clique in $G^c$, as claimed.
Let $C$ be a maximal clique in $G^c$ such that $N_G[v]\subseteq C$.
If $C$ is a vertex cover in $G$, then so is $C\setminus \{v\}$.
Thus, $C$ is not a minimal vertex cover in $G$, and using \Cref{triangle-free-CDC-observation} we reach a contradiction with the assumption that $G$ is CDC.
\end{proof}

Two distinct vertices $u$ and $v$ in a graph $G$ are said to be \emph{twins} if $N_G(u) = N_G(v)$.\footnote{In graph theory literature, twins are sometimes called \emph{false twins}, to distinguish them from \emph{true twins}, defined as pairs of vertices $u$ and $v$ such that $N_G[u] = N_G[v]$.}
Note that if $u$ and $v$ are twins in a graph $G$, then $G$ is isomorphic to the graph obtained from the graph $G-v$ by substituting $2K_1$, the two-vertex edgeless graph, for~$u$.
Therefore, since $2K_1$ is CDC, it follows from \Cref{thm:substitution-CDC} that $G$ is CDC if and only if $G-v$ is CDC.
In particular, when studying CDC graphs, we may restrict our attention to \emph{twin-free} graphs, that is, graphs without any pairs of twins.

The next lemma is somewhat similar to the previous one. 
It gives a necessary condition for twin-free triangle free graphs.

\begin{lemma}\label{false-twins-and-bisimplicial-edges}
Let $G$ be a twin-free triangle-free graph.
Then every vertex of $G$ belongs to at most one bisimplicial edge.
\end{lemma}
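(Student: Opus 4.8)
The plan is to argue by contradiction. Suppose some vertex $v$ lies in two distinct bisimplicial edges $\{v,a\}$ and $\{v,b\}$ with $a\neq b$; I will show that this forces $a$ and $b$ to be twins, contradicting the assumption that $G$ is twin-free. The first preliminary observations exploit triangle-freeness: since $a,b\in N(v)$, the vertices $a$ and $b$ are nonadjacent (otherwise $\{v,a,b\}$ would be a triangle), and moreover $v$ has no common neighbor with $a$ nor with $b$, so $N(a)\cap N(v)=\emptyset$ and $N(b)\cap N(v)=\emptyset$. Unpacking the definition, the bisimpliciality of $\{v,a\}$ says precisely that every vertex of $N(v)$ is adjacent to every vertex of $N(a)$, and by the disjointness just noted this is a genuine (self-loop-free) complete bipartite relation between $N(v)$ and $N(a)$; the same holds for $\{v,b\}$.

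Next I would prove $N(a)\subseteq N(b)$, after which the reverse inclusion follows by the symmetric roles of $a$ and $b$. Fix $w\in N(a)$. If $w=v$, then $w\in N(b)$ because $\{v,b\}$ is an edge. Otherwise $w\neq v$, and I note that $w\neq b$, since $w=b$ would give $b\in N(a)$, contradicting the nonadjacency of $a$ and $b$. Now $b\in N(v)$ and $w\in N(a)$ with $b\neq w$, so the bisimpliciality of $\{v,a\}$ yields that $b$ is adjacent to $w$, that is, $w\in N(b)$. Hence $N(a)\subseteq N(b)$, and by symmetry $N(a)=N(b)$, so $a$ and $b$ are twins, which is the desired contradiction.

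I expect no serious obstacle; the argument is short once bisimpliciality is read correctly in the triangle-free setting. The only points that require care are the degenerate cases in the inclusion $N(a)\subseteq N(b)$ (handling $w=v$ separately and checking $w\neq b$, so that no vertex is asked to be adjacent to itself) and the initial reduction identifying "$a$ and $b$ are twins" as the right target, rather than some more direct structural contradiction. Alternatively, one could run the whole argument through the characterization of bisimplicial edges as edges that are not the middle edge of any induced $P_4$ (noted just after the definition), but the direct neighborhood computation above seems cleaner and avoids case analysis on induced paths.
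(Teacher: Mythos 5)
Your proof is correct, and it is essentially the paper's argument run in the contrapositive direction: the paper uses twin-freeness to obtain a vertex $x$ adjacent to exactly one of the two other endpoints and then contradicts bisimpliciality by exhibiting the induced $P_4$ $x$--$w$--$v$--$z$, whereas you use bisimpliciality of each edge to prove the neighborhood inclusions $N(a)\subseteq N(b)$ and $N(b)\subseteq N(a)$ and then contradict twin-freeness. The underlying steps (triangle-freeness forcing $a,b$ nonadjacent, and bisimpliciality transferring neighbors across the shared vertex) are identical, so this is the same proof in different packaging.
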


\begin{proof}
Suppose for a contradiction that a vertex $v\in V$ belongs to two bisimplicial edges, say $\{v,w\}$ and $\{v,z\}$, with $w\neq z$.
Since $G$ is triangle-free, vertices $w$ and $z$ are non-adjacent.
Following that $G$ has no twins, there exists a vertex $x$ in $G$ that is adjacent to precisely one of $w$ and~$z$.
By symmetry, we may assume that $x$ is adjacent to $w$ but not to~$z$.
Since $G$ is triangle-free and $v$--$w$--$x$ is a path of length two in $G$, the vertex $x$ is not adjacent to~$v$.
It follows that $x$--$w$--$v$--$z$ is an induced $P_4$ in $G$ having $\{v,w\}$ as a middle edge, contradicting the fact that $\{v,w\}$ is bisimplicial in~$G$. 
\end{proof}

\subsection{A characterization of twin-free triangle-free CDC graphs}
\label{subsec:triangle-free-CDC-characterization}

\begin{sloppypar}
To arrive at a characterization of triangle-free CDC graphs that have no twins, first we need to recall the K\H{o}nig-Egerv\'ary property of graphs.
Given a graph $G$, we denote by $\alpha(G)$ its \emph{independence number}, that is, the maximum cardinality of an independent set in $G$,
by $\tau(G)$ its \emph{vertex cover number}, that is, the minimum cardinality of a vertex cover in $G$, and by $\nu(G)$ its \emph{matching number}, that is,
the maximum cardinality of a matching in~$G$.
Every graph $G$ satisfies $\alpha(G)+\tau(G) = |V(G)|$ and $\tau(G)\ge \nu(G)$.
If $\tau(G)= \nu(G)$, then $G$ is said to be \emph{K\H{o}nig-Egerv\'ary}.
The K\H{o}nig-Egerv\'ary Theorem (see, e.g.,~\cite{zbMATH01859168}) states that every bipartite graph is K\H{o}nig-Egerv\'ary.
\end{sloppypar}

\begin{lemma}\label{triangle-free-K-E}
Let $G$ be a triangle-free graph that has a bisimplicial perfect matching.
Then $G$ is K\H{o}nig-Egerv\'ary.
\end{lemma}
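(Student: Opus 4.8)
The plan is to compute both $\nu(G)$ and $\tau(G)$ explicitly and show that they both equal $|V|/2$. Let $M$ be a bisimplicial perfect matching in $G$ and write $n = |V|$, so that $|M| = n/2$. Since no matching in any graph can have more than $n/2$ edges, and $M$ already attains this bound, I immediately obtain $\nu(G) = n/2$. It then remains only to show that $\tau(G) = n/2$, for which, by the identity $\alpha(G) + \tau(G) = n$ recalled above, it suffices to prove that $\alpha(G) = n/2$.

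The heart of the argument is this computation of $\alpha(G)$. I would take an arbitrary maximal independent set $I$ of $G$ and argue that it contains exactly one endpoint of each edge of $M$. Indeed, fix an edge $\{u,v\} \in M$; since $M$ is bisimplicial, $\{u,v\}$ is a bisimplicial edge, and \Cref{obs:bisimplicial} guarantees that $I$ contains at least one of $u$ and $v$. On the other hand, $u$ and $v$ are adjacent, so $I$ contains at most one of them; hence it contains exactly one. As $M$ is a perfect matching, its edges partition $V$ into $n/2$ pairs, and $I$ selects precisely one vertex from each pair. Consequently every maximal independent set of $G$ has cardinality exactly $|M| = n/2$, so in particular $\alpha(G) = n/2$.

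Combining the two computations yields $\tau(G) = n - \alpha(G) = n/2 = \nu(G)$, so $G$ is K\H{o}nig-Egerv\'ary, as claimed. I do not anticipate any serious obstacle here: the single subtlety is the passage from ``at least one endpoint'' (which is exactly what \Cref{obs:bisimplicial} supplies) to ``exactly one endpoint'', and this uses only that the two endpoints of a matching edge are adjacent and therefore cannot both lie in an independent set. It is worth recording that the argument in fact proves slightly more, namely that \emph{all} maximal independent sets of $G$ share the same cardinality $n/2$; in other words, such a graph $G$ is well-covered, which foreshadows the connection to well-covered graphs mentioned in the introduction.
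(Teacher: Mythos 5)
Your proof is correct and follows essentially the same route as the paper's own argument: both establish $\nu(G) = |V(G)|/2$ from the perfect matching and $\alpha(G) = |V(G)|/2$ by showing every maximal independent set picks exactly one endpoint of each edge of $M$ (via \Cref{obs:bisimplicial} together with adjacency of matched endpoints), then conclude via $\tau(G) = |V(G)| - \alpha(G)$. Your closing remark that the argument also shows $G$ is well-covered is a nice observation, consistent with \Cref{triangle-free-CDC-WC} later in the paper.
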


\begin{proof}
Let $M$ be a bisimplicial perfect matching in~$G$.
Since $M$ is a perfect matching, we have $\nu(G) = |V(G)|/2$.
Furthermore, since each edge in $M$ is bisimplicial, every maximal independent set in $G$ contains exactly one vertex from each edge in~$M$.
This implies that $\alpha(G) = |M| = |V(G)|/2$.
It follows that $\tau(G) = |V(G)|-\alpha(G) = \nu(G)$, that is, $G$ is K\H{o}nig-Egerv\'ary.
\end{proof}

We will also need the notion of semi-perfect graphs.
Given a graph $G$, we denote by $\theta(G)$ its \emph{clique cover number}, that is, the minimum number of cliques in $G$ with union $V(G)$.
Every graph $G$ satisfies $\theta(G)\ge \alpha(G)$; 
if equality holds, then $G$ is said to be \emph{semi-perfect}.

\begin{sloppypar}
\begin{observation}\label{observation-triangle-free-clique-cover}
Every triangle-free K\H{o}nig-Egerv\'ary graph 
with a perfect matching is semi-perfect.
\end{observation}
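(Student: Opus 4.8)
The plan is to combine the three hypotheses into a short chain of (in)equalities relating $\alpha(G)$ and $\theta(G)$. Write $n = |V(G)|$. Since the excerpt records that $\theta(G)\ge \alpha(G)$ holds for every graph, it suffices to exhibit a clique cover of $G$ of size $\alpha(G)$, which would force $\theta(G) = \alpha(G)$.

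First I would compute $\alpha(G)$. The existence of a perfect matching gives $\nu(G) = n/2$. The K\H{o}nig-Egerv\'ary property then yields $\tau(G) = \nu(G) = n/2$, and combining this with the identity $\alpha(G) + \tau(G) = n$ gives $\alpha(G) = n/2$.

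Next I would produce a clique cover of the right size. Let $M$ be a perfect matching of $G$. Since each edge of $M$ is a clique of $G$ (of size two) and $M$ covers every vertex exactly once, the family $M$ is itself a clique cover of $G$ consisting of $|M| = n/2$ cliques. Hence $\theta(G) \le n/2 = \alpha(G)$. Together with the general inequality $\theta(G) \ge \alpha(G)$ this forces $\theta(G) = \alpha(G)$, so $G$ is semi-perfect.

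I do not expect a genuine obstacle here: the statement follows directly from the definitions once one observes that a perfect matching is simultaneously a clique cover. It is worth remarking that triangle-freeness is not actually used in this argument; the perfect matching and the K\H{o}nig-Egerv\'ary property already suffice. If one preferred to exploit triangle-freeness, one could instead argue that in a triangle-free graph every clique has size at most two, so a minimum clique cover may be taken to be a partition into matching edges and singletons, giving $\theta(G) = n - \nu(G) = n/2$; this route reaches the same conclusion but is slightly longer.
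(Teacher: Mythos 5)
Your proof is correct and is essentially identical to the paper's own argument: both derive $\alpha(G) = |V(G)|/2$ from the perfect matching and the K\H{o}nig-Egerv\'ary property, and both bound $\theta(G)\le |V(G)|/2$ by viewing the perfect matching itself as a clique cover, concluding $\theta(G)\le\alpha(G)$. Your remark that triangle-freeness is never used is also accurate --- the paper's proof does not use it either; the hypothesis merely reflects the context in which the observation is applied.
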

\end{sloppypar}

\begin{proof}
Since $M$ has a perfect matching, we have $\nu(G) = |V(G)|/2$ and $\theta(G)\le |V(G)|/2$.
Since $G$ is K\H{o}nig-Egerv\'ary, we have $\tau(G) = \nu(G) = |V(G)|/2$ and consequently $\alpha(G) = |V(G)|-\tau(G) = |V(G)|/2$.
Hence, $\theta(G)\le \alpha(G)$ and $G$ is semi-perfect.
\end{proof}

A \emph{clique partition} of a graph $G$ is a partition of its vertex set into cliques.
A \emph{minimum clique partition} is a clique partition of minimum cardinality.
A graph $G$ is said to be \emph{localizable} if it 
admits a partition of its vertex set into strong cliques (see~\cite{MR1715546}).
The following result characterizes localizable graphs within the class of semi-perfect graphs.

\begin{theorem}[Hujdurovi\'c, Milani{\v{c}}, and Ries~\cite{MR3777057}]\label{localizable-semi-perfect}
For every semi-perfect graph $G$, the following conditions are equivalent.
\begin{enumerate}
  \item $G$ is localizable.
  \item For every minimum clique partition of $G$, each clique in the partition is strong.
\end{enumerate}
\end{theorem}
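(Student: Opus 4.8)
The plan is to prove the two implications separately; the implication $(2)\Rightarrow(1)$ is essentially free, while all the content sits in $(1)\Rightarrow(2)$. For $(2)\Rightarrow(1)$, I would simply note that $G$, being finite and nonempty, admits a minimum clique partition (partitioning into singletons already gives a clique partition, so a minimum one exists); by hypothesis $(2)$ all of its cliques are strong, and this partition directly witnesses that $G$ is localizable.

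For $(1)\Rightarrow(2)$, the first and crucial step I would isolate is that a localizable graph is \emph{well-covered}, i.e.\ all of its maximal independent sets have the same cardinality. Fix a partition $\mathcal{P}=\{C_1,\ldots,C_k\}$ of $V(G)$ into strong cliques and let $I$ be an arbitrary maximal independent set. Each $C_i$ is strong, so $C_i\cap I\neq\emptyset$; since $C_i$ is a clique and $I$ is independent, $|C_i\cap I|\le 1$; hence $|C_i\cap I|=1$ for every $i$. As the $C_i$ partition $V(G)$, this forces $|I|=k$. Applying this to a maximum independent set yields $\alpha(G)=k$, so in fact every maximal independent set has cardinality exactly $\alpha(G)$.

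Next I would pin down the cardinality of minimum clique partitions. The partition $\mathcal{P}$ is itself a clique partition of size $k=\alpha(G)$, so the minimum clique partition has cardinality at most $k$, i.e.\ $\theta(G)\le k$; together with the semi-perfect hypothesis $\theta(G)=\alpha(G)$ this gives $\theta(G)=\alpha(G)=k$, so every minimum clique partition consists of exactly $k$ cliques. Now let $\mathcal{Q}=\{D_1,\ldots,D_k\}$ be any minimum clique partition and suppose, for contradiction, that some $D_j$ is not strong. Then there is a maximal independent set $I$ with $I\cap D_j=\emptyset$. By the well-covered property $|I|=\alpha(G)=k$, but since $I$ meets each clique of $\mathcal{Q}$ in at most one vertex and misses $D_j$ altogether, $|I|=\sum_{i\neq j}|I\cap D_i|\le k-1$, a contradiction. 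Hence every $D_j$ is strong, establishing $(2)$.

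The only nontrivial step—and the one I expect to be the main obstacle to get exactly right—is the well-covered lemma together with the resulting chain $\alpha(G)=k=\theta(G)$: once one sees that strongness of every clique in a partition forces each maximal independent set to meet each clique exactly once, the sizing of minimum clique partitions and the final counting contradiction both follow mechanically. It is worth noting that the semi-perfect hypothesis is used only to equate $\theta(G)$ with $\alpha(G)$; in fact localizability already forces $\theta(G)=\alpha(G)$, so the hypothesis mainly serves to situate the statement within its natural class.
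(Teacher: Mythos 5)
Your proof is correct and complete, but there is nothing in the paper to compare it against: the statement is imported from Hujdurovi\'c, Milani\v{c}, and Ries~\cite{MR3777057} and the paper gives no proof of it, so your argument stands as a self-contained proof of the cited result. The structure you chose is the natural one, and the two counting facts you isolate carry all the weight: a partition $\mathcal{P}$ into $k$ strong cliques forces every maximal independent set to meet each part exactly once, hence to have size exactly $k$ (so $G$ is well-covered with $\alpha(G)=k$), and then any minimum clique partition has exactly $k$ parts, so a non-strong part $D_j$ would yield a maximal independent set of size at most $k-1$, a contradiction. The only step you leave implicit is the identification of the minimum clique partition cardinality with $\theta(G)$: you need the (immediate) fact that every clique partition is a clique cover, giving the lower bound $k=\theta(G)$ on the number of parts, while $\mathcal{P}$ witnesses the upper bound; alternatively one can bypass $\theta$ entirely by noting that any clique partition has at least $\alpha(G)$ parts, since a maximum independent set meets each part at most once. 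Your closing observation is also accurate: localizability itself implies $\theta(G)=\alpha(G)$, so the equivalence of (1) and (2) as stated holds for \emph{every} graph, and the semi-perfect hypothesis is not logically needed for it. That hypothesis earns its keep in the neighboring equivalences of the source paper, for instance ``well-covered $\Leftrightarrow$ localizable,'' which genuinely requires it: $C_5$ is well-covered and not localizable, but also not semi-perfect.
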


\begin{corollary}\label{triangle-free-semi-perfect}
Let $G$ be a triangle-free semi-perfect graph. 
Then the following conditions are equivalent:
\begin{enumerate}
  \item $G$ has a bisimplicial perfect matching.
  \item $G$ has a perfect matching and every perfect matching in $G$ is bisimplicial.
\end{enumerate}
\end{corollary}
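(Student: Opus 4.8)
The plan is to deduce the equivalence from the characterization of localizable semi-perfect graphs in \Cref{localizable-semi-perfect}, together with the fact recorded earlier that in a triangle-free graph the strong cliques are exactly the isolated vertices and the bisimplicial edges. The implication $(2)\Rightarrow(1)$ is immediate: if $G$ has a perfect matching and every perfect matching is bisimplicial, then any one perfect matching is already a bisimplicial perfect matching. So the real work lies in $(1)\Rightarrow(2)$.

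For $(1)\Rightarrow(2)$, suppose $M$ is a bisimplicial perfect matching of $G$. In particular $G$ has a perfect matching, which settles the first half of condition~(2), and since a perfect matching leaves no vertex uncovered, $G$ has no isolated vertices. First I would observe that $M$ exhibits $G$ as localizable: each edge of $M$ is a bisimplicial edge, hence a strong clique, so $M$ is a partition of $V(G)$ into strong cliques. Since $G$ is also semi-perfect by hypothesis, \Cref{localizable-semi-perfect} applies and yields that for every minimum clique partition of $G$, each clique in the partition is strong.

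It then remains to convert this statement about minimum clique partitions into the desired statement about perfect matchings. The key point is that in a triangle-free graph the only cliques are single vertices and edges, so any clique partition consists of a matching together with singletons on the uncovered vertices; writing $n = |V(G)|$, a clique partition whose matching has size $k$ uses exactly $n-k$ cliques, whence $\theta(G) = n - \nu(G)$. Since $G$ has a perfect matching, $\nu(G) = n/2$ and therefore $\theta(G) = n/2$. Consequently every perfect matching $M'$ is a clique partition with exactly $n/2 = \theta(G)$ cliques, hence a minimum clique partition, and conversely a minimum clique partition of size $n/2$ must consist entirely of edges and thus be a perfect matching. Applying the conclusion of \Cref{localizable-semi-perfect} to $M'$, each of its edges is a strong clique, that is, a bisimplicial edge; hence every perfect matching is bisimplicial, establishing~(2).

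I expect the main obstacle to be the bookkeeping in the last step: correctly identifying the minimum clique partitions of a triangle-free graph with its perfect matchings via the identity $\theta(G)=n-\nu(G)$, and verifying that the existence of a perfect matching forces every minimum clique partition to be edge-only, with no leftover singletons. Once this translation is in place, the equivalence follows directly from \Cref{localizable-semi-perfect} and the description of strong cliques in triangle-free graphs.
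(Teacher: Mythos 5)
Your proof is correct and follows essentially the same route as the paper's: exhibit localizability via the bisimplicial perfect matching, invoke \Cref{localizable-semi-perfect}, and identify minimum clique partitions of a triangle-free graph with a perfect matching as exactly its perfect matchings. The only difference is that you spell out the bookkeeping $\theta(G)=n-\nu(G)$ behind that identification, which the paper asserts in a single sentence.
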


\begin{proof}
Trivially, the second condition implies the first one.
So it suffices to assume that $G$ has a bisimplicial perfect matching $M$ and show that every perfect matching in $G$ is bisimplicial.
Since $M$ is a partition of the vertex set of $G$ into strong cliques, $G$ is localizable.
By \Cref{localizable-semi-perfect}, every clique in any minimum clique partition of $G$ is strong.
In a triangle-free graph having a perfect matching, minimum clique partitions are precisely its perfect matchings.
Thus, $G$ has a perfect matching and every perfect matching in $G$ is bisimplicial.
\end{proof}

The following theorem gives several characterizations of triangle-free CDC graphs that are twin-free.
 
\begin{theorem}\label{triangle-free-CDC}
Let $G$ be a twin-free triangle-free graph without isolated vertices.
Then, the following conditions are equivalent.
\begin{enumerate}
  \item\label{item-G-CDC} 
  $G$ is CDC.
  \item\label{item-vertices-in-bisimplicial-edges}  
  Every vertex of $G$ is an endpoint of a bisimplicial edge.
  \item\label{item-bisimplicial-pm}
    $G$ has a bisimplicial perfect matching.
  \item\label{item-unique-bisimplicial-pm}
    $G$ has a unique bisimplicial perfect matching.
  \item\label{item-bisimplicial-pm-2}
  $G$ has a perfect matching and every perfect matching in $G$ is bisimplicial.
\end{enumerate}
\end{theorem}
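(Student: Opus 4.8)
The plan is to prove the five-way equivalence by closing the cycle $(\ref{item-G-CDC}) \Rightarrow (\ref{item-vertices-in-bisimplicial-edges}) \Rightarrow (\ref{item-unique-bisimplicial-pm}) \Rightarrow (\ref{item-bisimplicial-pm}) \Rightarrow (\ref{item-G-CDC})$ and then adjoining $(\ref{item-bisimplicial-pm}) \Leftrightarrow (\ref{item-bisimplicial-pm-2})$ on the side. Three of the arrows are essentially free from earlier work: $(\ref{item-G-CDC}) \Rightarrow (\ref{item-vertices-in-bisimplicial-edges})$ is exactly \Cref{triangle-free-CDC-1}, the implication $(\ref{item-bisimplicial-pm}) \Rightarrow (\ref{item-G-CDC})$ is the CDC half of \Cref{triangle-free-with-bpm}, and $(\ref{item-unique-bisimplicial-pm}) \Rightarrow (\ref{item-bisimplicial-pm})$ is immediate. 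So the genuinely new content is concentrated in the step $(\ref{item-vertices-in-bisimplicial-edges}) \Rightarrow (\ref{item-unique-bisimplicial-pm})$ and in the equivalence of $(\ref{item-bisimplicial-pm})$ and $(\ref{item-bisimplicial-pm-2})$.

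For $(\ref{item-vertices-in-bisimplicial-edges}) \Rightarrow (\ref{item-unique-bisimplicial-pm})$ the idea is to extract a perfect matching directly from the collection of bisimplicial edges, using twin-freeness. By \Cref{false-twins-and-bisimplicial-edges}, in a twin-free triangle-free graph every vertex lies in \emph{at most} one bisimplicial edge, whereas condition $(\ref{item-vertices-in-bisimplicial-edges})$ guarantees every vertex lies in \emph{at least} one. Hence every vertex lies in exactly one, so the set $B$ of all bisimplicial edges of $G$ is a spanning subgraph in which every vertex has degree one, that is, a perfect matching, and all its edges are bisimplicial by construction. Uniqueness is then automatic: any bisimplicial perfect matching $M'$ consists of bisimplicial edges, so $M' \subseteq B$, and since a perfect matching is inclusion-maximal among matchings, $M' = B$. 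This delivers $(\ref{item-unique-bisimplicial-pm})$.

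It remains to establish $(\ref{item-bisimplicial-pm}) \Leftrightarrow (\ref{item-bisimplicial-pm-2})$. The direction $(\ref{item-bisimplicial-pm-2}) \Rightarrow (\ref{item-bisimplicial-pm})$ is trivial, as a perfect matching that is (like every perfect matching) bisimplicial already witnesses $(\ref{item-bisimplicial-pm})$. For $(\ref{item-bisimplicial-pm}) \Rightarrow (\ref{item-bisimplicial-pm-2})$ the plan is to route $G$ through the semi-perfect/localizability machinery: a bisimplicial perfect matching makes $G$ a K\H{o}nig-Egerv\'ary graph by \Cref{triangle-free-K-E}, and a triangle-free K\H{o}nig-Egerv\'ary graph with a perfect matching is semi-perfect by \Cref{observation-triangle-free-clique-cover}. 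Being triangle-free and semi-perfect, $G$ satisfies the hypotheses of \Cref{triangle-free-semi-perfect}, whose two equivalent conditions are precisely $(\ref{item-bisimplicial-pm})$ and $(\ref{item-bisimplicial-pm-2})$; since $(\ref{item-bisimplicial-pm})$ holds, so does $(\ref{item-bisimplicial-pm-2})$.

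I expect the main obstacle to be conceptual rather than computational, since the technical weight has already been discharged in the preceding lemmas. The crucial observation is that the twin-freeness hypothesis is exactly what upgrades "at least one bisimplicial edge per vertex" in $(\ref{item-vertices-in-bisimplicial-edges})$ to "exactly one," forcing the bisimplicial edges to assemble into a \emph{unique} perfect matching. The second delicate point is recognizing that $G$ can be certified semi-perfect so that \Cref{triangle-free-semi-perfect} applies, which requires feeding the bisimplicial matching first through \Cref{triangle-free-K-E} and then through \Cref{observation-triangle-free-clique-cover} before the equivalence of $(\ref{item-bisimplicial-pm})$ and $(\ref{item-bisimplicial-pm-2})$ becomes available.
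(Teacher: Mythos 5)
Your proposal is correct and uses exactly the same ingredients as the paper's proof (\Cref{triangle-free-CDC-1}, \Cref{false-twins-and-bisimplicial-edges}, \Cref{triangle-free-with-bpm}, and the K\H{o}nig-Egerv\'ary/semi-perfect/localizability chain via \Cref{triangle-free-K-E}, \Cref{observation-triangle-free-clique-cover}, and \Cref{triangle-free-semi-perfect}); the only difference is cosmetic bookkeeping, as the paper chains the implications as $(\ref{item-G-CDC})\Rightarrow(\ref{item-vertices-in-bisimplicial-edges})\Rightarrow(\ref{item-bisimplicial-pm})\Rightarrow(\ref{item-unique-bisimplicial-pm})\Rightarrow(\ref{item-bisimplicial-pm-2})\Rightarrow(\ref{item-bisimplicial-pm})\Rightarrow(\ref{item-G-CDC})$ while you run the cycle $(\ref{item-G-CDC})\Rightarrow(\ref{item-vertices-in-bisimplicial-edges})\Rightarrow(\ref{item-unique-bisimplicial-pm})\Rightarrow(\ref{item-bisimplicial-pm})\Rightarrow(\ref{item-G-CDC})$ with $(\ref{item-bisimplicial-pm})\Leftrightarrow(\ref{item-bisimplicial-pm-2})$ adjoined.
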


\begin{proof}
By \Cref{triangle-free-CDC-1}, if $G$ is CDC, then every vertex of $G$ is an endpoint of a bisimplicial edge.
This shows that Condition~\ref{item-G-CDC} implies Condition~\ref{item-vertices-in-bisimplicial-edges}.

Assume Condition~\ref{item-vertices-in-bisimplicial-edges}, that is, every vertex of $G$ is an endpoint of a bisimplicial edge.
Let $B$ be a set of bisimplicial edges such that each vertex in $G$ is an endpoint of an edge in~$B$.
By \Cref{false-twins-and-bisimplicial-edges}, no two edges in $B$ have an endpoint in common.
Thus, $B$ is a bisimplicial perfect matching.
This shows that Condition~\ref{item-vertices-in-bisimplicial-edges} implies Condition~\ref{item-bisimplicial-pm}.

Next, assume $G$ has a perfect matching $M$ consisting of bisimplicial edges.
By \Cref{false-twins-and-bisimplicial-edges}, every vertex of $G$ belongs to at most one bisimplicial edge.
It follows that $M$ is a unique bisimplicial perfect matching in~$G$.
This shows that Condition~\ref{item-bisimplicial-pm} implies Condition~\ref{item-unique-bisimplicial-pm}.

\begin{sloppypar}
Next, assume $G$ has a unique bisimplicial perfect matching.
By \Cref{triangle-free-K-E}, $G$ is K\H{o}nig-Egerv\'ary and hence, by \Cref{observation-triangle-free-clique-cover}, $G$ is semi-perfect.
By \Cref{triangle-free-semi-perfect}, $G$ has a perfect matching and every perfect matching in $G$ is bisimplicial.
This shows that Condition~\ref{item-unique-bisimplicial-pm} implies Condition~\ref{item-bisimplicial-pm-2}.
\end{sloppypar}

If $G$ has a perfect matching and every perfect matching in $G$ is bisimplicial, then clearly $G$ has a bisimplicial perfect matching.
Thus, Condition~\ref{item-bisimplicial-pm-2} implies Condition~\ref{item-bisimplicial-pm}.

Finally, observe that Condition~\ref{item-bisimplicial-pm} implies Condition~\ref{item-G-CDC}, which follows from~\Cref{triangle-free-with-bpm}.
\end{proof}

\subsection{Consequences of \Cref{triangle-free-CDC}}

Observe first that \Cref{thm:substitution-CDC} together with \Cref{triangle-free-CDC} leads to a complete characterization of triangle-free CDC graphs.

Next, the equivalence of Conditions~\ref{item-G-CDC}, \ref{item-unique-bisimplicial-pm}, and \ref{item-bisimplicial-pm-2} imply the following.

\begin{corollary}
Every twin-free triangle-free CDC graph without isolated vertices has a unique perfect matching.
\end{corollary}

However, there are connected twin-free triangle-free graphs with a unique perfect matching that are not CDC, for example, the path~$P_6$.

Another consequence is implied by \Cref{triangle-free-K-E} and \Cref{triangle-free-CDC}.

\begin{sloppypar}
\begin{corollary}\label{triangle-free-CDC-K-E}
Let $G$ be a twin-free triangle-free CDC graph.
Then $G$ is K\H{o}nig-Egerv\'ary.
\end{corollary}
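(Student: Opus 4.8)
The plan is to derive \Cref{triangle-free-CDC-K-E} directly from the equivalences already established in \Cref{triangle-free-CDC} together with the structural lemma \Cref{triangle-free-K-E}. The key observation is that \Cref{triangle-free-CDC-K-E} concerns an arbitrary twin-free triangle-free CDC graph, whereas \Cref{triangle-free-CDC} is stated for graphs \emph{without isolated vertices}; so first I would dispose of isolated vertices, which contribute nothing to $\tau$, $\nu$, or (since they form their own trivially-optimal independent-set component) the K\H{o}nig-Egerv\'ary balance.

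Concretely, let $G$ be a twin-free triangle-free CDC graph and write $G = G_0 \cup \{z_1,\dots,z_k\}$ where $G_0$ is the subgraph induced by the non-isolated vertices and $z_1,\dots,z_k$ are the isolated vertices. Since twin-freeness forbids two isolated vertices (they would be twins with $N(\cdot)=\emptyset$), in fact $k \le 1$; but I would not even need this refinement. The point is that $G_0$ is again twin-free, triangle-free, and without isolated vertices, and it is CDC (isolated vertices do not affect the clique hypergraph structure in a way that breaks dual conformality---indeed, the clique hypergraph of $G_0$ and that of $G$ differ only by the singleton hyperedges $\{z_i\}$, which are forced into every minimal transversal and hence do not change conformality of the dual). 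Thus \Cref{triangle-free-CDC} applies to $G_0$.

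The heart of the argument is then a one-line chain of implications. By the equivalence of Conditions~\ref{item-G-CDC} and \ref{item-bisimplicial-pm} in \Cref{triangle-free-CDC}, the graph $G_0$ being CDC gives that $G_0$ has a bisimplicial perfect matching. By \Cref{triangle-free-K-E}, any triangle-free graph with a bisimplicial perfect matching is K\H{o}nig-Egerv\'ary, so $G_0$ is K\H{o}nig-Egerv\'ary, i.e.\ $\tau(G_0) = \nu(G_0)$. Finally I would transfer this equality to $G$: adding isolated vertices changes neither the matching number nor the vertex cover number, since an isolated vertex lies in no edge. Hence $\tau(G) = \tau(G_0) = \nu(G_0) = \nu(G)$, so $G$ is K\H{o}nig-Egerv\'ary.

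I expect the only genuine subtlety---and the step I would treat with some care rather than as routine---to be the reduction from $G$ to $G_0$, namely verifying that passing to the non-isolated part preserves both the CDC property and twin-freeness, and that the invariants $\tau$ and $\nu$ are unaffected by isolated vertices. Everything after that reduction is a direct citation of \Cref{triangle-free-CDC} (for the existence of a bisimplicial perfect matching) followed by \Cref{triangle-free-K-E} (for the K\H{o}nig-Egerv\'ary conclusion), so no new combinatorial work is required. In fact, if one is willing to restrict the statement implicitly to graphs without isolated vertices, the proof collapses entirely to the two-step chain ``CDC $\Rightarrow$ bisimplicial perfect matching $\Rightarrow$ K\H{o}nig-Egerv\'ary,'' which is presumably the intended reading given how the corollary is introduced.
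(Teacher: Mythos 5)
Your proof is correct and takes essentially the same route as the paper, whose (implicit) argument is exactly the chain CDC $\Rightarrow$ bisimplicial perfect matching (via \Cref{triangle-free-CDC}) $\Rightarrow$ K\H{o}nig-Egerv\'ary (via \Cref{triangle-free-K-E}). Your explicit reduction to the isolated-vertex-free case is a legitimate extra care step---the corollary as stated does not exclude isolated vertices while \Cref{triangle-free-CDC} requires their absence---and your verification that deleting isolated vertices preserves twin-freeness, the CDC property, $\tau$, and $\nu$ is sound (one could alternatively invoke \Cref{thm:substitution-CDC}, viewing $G$ as a disjoint union, to get the CDC part).
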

\end{sloppypar}

\begin{remark}
In \Cref{triangle-free-CDC-K-E}, the assumption that $G$ is twin-free is necessary.
A triangle-free CDC graph that is not K\H{o}nig-Egerv\'ary can be obtained as follows.
First, let $H = C_5\odot K_1$ be the corona of the $5$-cycle.
Then $H$ is a triangle-free CDC graph, by \Cref{cor:corona}.
Let $G$ be the graph obtained from $H$ by substituting $2K_1$ into each vertex of degree $3$ in $H$.
Clearly, $G$ is triangle-free, and by \Cref{thm:substitution-CDC}, $G$ is a CDC graph.
However, $G$ is not K\H{o}nig-Egerv\'ary.
The graph has $15$ vertices and, hence, its matching number is at most $7$ (in fact, it is exactly $7$).
Its independence number is equal to the weighted independence number of the graph $H$ in which each vertex of the $5$-cycle has weight $2$ and each pendant vertex has weight~$1$.
Let $I$ be an independent set in $H$ and let $k$ be the number of vertices that the set contains from the $5$-cycle.
Then $k \in \{0,1,2\}$ and the weight of $I$ is at most $5+k$, since the vertices from the $5$-cycle contribute a weight of $2k$ and the vertices outside the cycle contribute $5-k$.
Thus, the independence number of $G$ is at most $7$ (in fact, it is  exactly $7$).
Consequently, the vertex cover number of $G$ is at least $15-7 = 8$, and since the matching number of $G$ is at most $7$, we conclude that $G$ is not K\H{o}nig-Egerv\'ary.
\end{remark}

A graph $G$ is \emph{well-covered} if all its minimal vertex covers have the same cardinality, or, equivalently, if all its  maximal independent sets have the same cardinality.
If $G$ is a localizable graph, with a partition of its vertex set $V(G) = \{C_1,\ldots, C_k\}$ into strong cliques, then
every maximal independent set $S$ in $G$ contains exactly one vertex from each clique $C_i$; thus, every localizable graph is well-covered.
While the converse implication is generally not true (consider for example the $5$-cycle or the $7$-cycle), it holds in the class of perfect graphs (see~\cite{MR3777057}).
Our next consequence of~\Cref{triangle-free-CDC} is the following.

\begin{corollary}\label{triangle-free-CDC-WC}
Every twin-free triangle-free CDC graph is localizable and thus well-covered.
\end{corollary}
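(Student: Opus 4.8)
The plan is to read the conclusion straight off \Cref{triangle-free-CDC}, combined with two facts already recorded above: in a triangle-free graph the strong cliques are exactly the isolated vertices and the bisimplicial edges, and every localizable graph is well-covered. The chain I would establish is that $G$ being CDC yields a bisimplicial perfect matching, which is a partition of $V(G)$ into strong cliques, hence $G$ is localizable, hence well-covered. First I would dispose of isolated vertices. Since $G$ is twin-free it has at most one isolated vertex, as any two isolated vertices share the empty neighborhood and would be twins. If $G$ has an isolated vertex $v$, I would delete it: this preserves triangle-freeness and twin-freeness, leaves no new isolated vertex (a vertex isolated in $G-v$ would also be isolated in $G$ and thus a twin of $v$), and preserves the CDC property. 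Reinserting $v$ as a singleton strong clique at the very end, I may therefore assume that $G$ has no isolated vertices.

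Now $G$ is a twin-free triangle-free CDC graph without isolated vertices, so the equivalence of Conditions~\ref{item-G-CDC} and~\ref{item-bisimplicial-pm} in \Cref{triangle-free-CDC} supplies a bisimplicial perfect matching $M$ of $G$. By the fact noted just before \Cref{obs:bisimplicial}, each bisimplicial edge of a triangle-free graph is a strong clique; hence every edge of $M$ is a strong clique. As $M$ is a perfect matching, the set of its edges is a partition of $V(G)$ into strong cliques, which is exactly what it means for $G$ to be localizable. Finally, localizability yields well-coveredness by the argument given just before the statement: writing $V(G)=C_1\cup\cdots\cup C_k$ for such a partition, each strong clique $C_i$ meets every maximal independent set, while a clique meets an independent set in at most one vertex, so every maximal independent set has size exactly $k$.

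The argument is short precisely because \Cref{triangle-free-CDC} already encodes the structural heart of the matter. The only step I expect to require genuine (though routine) verification is the claim, used in the reduction, that deleting the unique isolated vertex $v$ preserves the CDC property. I would check this by hand: since $v$ contributes the maximal clique $\{v\}$, the minimal clique transversals of $G$ are exactly the sets $\{v\}\cup T$ with $T$ a minimal clique transversal of $G-v$, whence $v$ is adjacent in $G^c$ to precisely those vertices lying in some minimal clique transversal of $G-v$. When $G-v$ is CDC these transversals cover $V(G-v)$, so $v$ becomes a universal vertex of $G^c$, and comparing the maximal cliques of $G^c$ with those of $(G-v)^c$ gives the equivalence of the CDC property for $G$ and $G-v$ in both directions.
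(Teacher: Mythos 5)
Your proposal is correct, and its mathematical core is the same as the paper's: extract a bisimplicial perfect matching from \Cref{triangle-free-CDC}, note that in a triangle-free graph bisimplicial edges are strong cliques, and conclude localizability and hence well-coveredness. The two arguments differ only in how they dispose of isolated vertices, which \Cref{triangle-free-CDC} does not allow. The paper reduces to connected components, using that a graph is localizable if and only if all of its components are (and, implicitly, that components of a CDC graph are CDC, which follows from \Cref{thm:substitution-CDC}); you instead delete the unique isolated vertex $v$ (unique by twin-freeness) and verify by hand that this deletion preserves the CDC property. Your verification is sound, but two remarks are in order. First, the lemma you prove by hand is an instant consequence of machinery already in the paper: $G$ is precisely the substitution of $G-v$ into one vertex of $2K_1$, so \Cref{thm:substitution-CDC} together with \Cref{cor:P4-free} (applied to $2K_1$) yields that $G$ is CDC if and only if $G-v$ is. Second, a small wrinkle in your sketch: you justify the universality of $v$ in $G^c$ by assuming that $G-v$ is CDC, yet the implication you actually need is that $G$ being CDC forces $G-v$ to be CDC. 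This is harmless, since universality holds unconditionally --- every vertex of any graph lies in some minimal clique transversal, as is implicit in the proof of \Cref{lem:cliques-ct-imply-G-complete}: for any vertex $u$, the set $V(G)\setminus N(u)$ is a clique transversal, and any minimal clique transversal contained in it must contain $u$ --- but your sketch should invoke that fact (or the CDC property of $G$ itself) rather than the CDC property of $G-v$. Finally, your reduction tacitly assumes $G\neq K_1$; that case is trivially localizable and should be set aside at the start, just as the paper does for one-vertex components.
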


\begin{proof}
A graph is localizable if and only if all of its components are localizable.
Thus, it suffices to show that every connected twin-free triangle-free CDC graph is localizable.
For the one-vertex graph, this is trivial, and if $G$ has at least two vertices, then \Cref{triangle-free-CDC} implies that $G$
has a perfect matching $M$ consisting only of bisimplicial edges.
Such a matching is a partition of $V(G)$ into strong cliques, and hence $G$ is localizable.
\end{proof}

For general (not necessarily twin-free) triangle-free CDC graphs,  \Cref{triangle-free-CDC}, and the fact that the class of CDC graphs is closed under substitution, imply the following result.

\begin{proposition}\label{cor:Gc-for-triangle-free}
Let $G$ be a triangle-free CDC graph.
Then $G^c$ is isomorphic to $\overline{G}$.
In particular, $\overline{G}$ is CDC.
Furthermore, the graphs $\overline{G}^c$ and $\overline{G^c}$ are both isomorphic to~$G$.
\end{proposition}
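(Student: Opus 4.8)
The plan is to reduce the general statement to the twin-free case treated in \Cref{triangle-free-CDC} and \Cref{triangle-free-with-bpm}, and then to transport the resulting isomorphism back through the substitution operation. First I would organize $V(G)$ into its twin classes; since twins are non-adjacent, each class is an independent set, and if $G'$ denotes the twin-free graph obtained by keeping one representative per class, then $G$ is recovered from $G'$ by substituting, for each vertex $w$ of $G'$, the edgeless graph $k_w K_1$, where $k_w$ is the size of the class of $w$. Each $k_w K_1$ is $P_4$-free and hence CDC by \Cref{cor:P4-free}, so \Cref{thm:substitution-CDC} (applied one class at a time) shows that $G'$ is CDC; it is of course still triangle-free. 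Setting aside the at most one isolated-vertex class (for which $K_1^c=K_1=\overline{K_1}$ is handled directly), I may assume $G'$ has no isolated vertices, so \Cref{triangle-free-CDC} applies and $G'$ has a bisimplicial perfect matching. By \Cref{triangle-free-with-bpm}, $(G')^c\cong\overline{G'}$ and $\overline{G'}$ is CDC.

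Next I would record how the two operations interact with substitution of edgeless graphs. Complementation commutes with substitution (a standard fact), $\overline{G_w[F]}=\overline{G}_w[\overline{F}]$, and $\overline{k_wK_1}=K_{k_w}$, so $\overline{G}$ is obtained from $\overline{G'}$ by substituting $K_{k_w}$ for each $w$. For the clique-dual I would combine \Cref{cor:clique-dualization} with the observation that the maximal cliques of $k K_1$ are its singletons, whence $\C^d(k K_1)$ consists of the single hyperedge containing all $k$ vertices. Substituting this one-hyperedge hypergraph for $w$ in $\C^d(G)$ replaces $w$ by all $k$ new vertices in every minimal clique transversal through $w$; reading off the co-occurrence graph, the $k$ new vertices form a clique and inherit the $G^c$-neighbourhood of $w$. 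This yields the identity $(G_w[k K_1])^c=(G^c)_w[K_k]$, so that $G^c$ too is obtained from $(G')^c$ by substituting the complete graphs $K_{k_w}$.

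Thus $G^c$ and $\overline{G}$ arise from the isomorphic base graphs $(G')^c$ and $\overline{G'}$ by substituting the same family of complete graphs $K_{k_w}$; matching the substituted cliques should give $G^c\cong\overline{G}$, which is the first assertion. The remaining clauses then follow formally: $\overline{G}$ is CDC because $G^c$ is CDC by \Cref{prop:CDC-implies-2-cycle} and $\overline{G}\cong G^c$; from $G^c\cong\overline{G}$ we get $\overline{G^c}\cong\overline{\overline{G}}=G$; and, since the clique-dual respects isomorphism, $\overline{G}^c\cong(G^c)^c=G^{cc}=G$, the last equality again being \Cref{prop:CDC-implies-2-cycle}.

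The step I expect to be the main obstacle is the very last one, transporting the isomorphism through the substitution. The isomorphism $(G')^c\to\overline{G'}$ furnished by \Cref{triangle-free-with-bpm} is the matching involution $f$ that swaps the two endpoints of each bisimplicial edge, and this map moves the vertices of $G'$ rather than fixing them. Consequently the multiplicity $k_w$ attached to a vertex $w$ on the $(G')^c$ side is matched, under $f$, against the multiplicity $k_{f(w)}$ on the $\overline{G'}$ side, while one substitutes $K_{k_w}$ at $w$ in both base graphs. Carrying out the lift therefore requires controlling how the multiplicity vector $(k_w)_{w\in V(G')}$ behaves along the bisimplicial matching and checking that the two labelled substitutions are genuinely isomorphic; this bookkeeping, rather than the commutation identities themselves, is the delicate heart of the argument.
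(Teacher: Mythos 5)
Your reduction to the twin-free case and the two commutation identities (complementation and clique-dual both commuting with substitution of edgeless, respectively complete, graphs) are correct, and they run parallel to the paper's own inductive proof via \Cref{thm:substitution-CDC} and \Cref{cor:clique-dualization}. But the step you flagged as ``the delicate heart'' is not mere bookkeeping: it is a genuine gap, and it cannot be closed, because the compatibility it requires --- $k_w = k_{f(w)}$ for the matching involution $f$ of \Cref{triangle-free-with-bpm} --- can fail, and when it fails the conclusion fails with it. Concretely, let $G'$ be the path $P_4$ with vertices $w$--$b$--$c$--$d$, and let $G$ be obtained from $G'$ by substituting $2K_1$ for the endpoint $w$; that is, $G$ is the tree with vertices $a_1,a_2,b,c,d$ and edges $a_1b$, $a_2b$, $bc$, $cd$. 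Then $G$ is triangle-free and CDC (\Cref{cor:P4-free} and \Cref{thm:substitution-CDC}), the unique bisimplicial perfect matching of $G'$ is $\{w,b\},\{c,d\}$, and $f$ swaps $w$ with $b$ while $k_w = 2 \neq 1 = k_{f(w)}$. The minimal clique transversals of $G$ are $\{b,c\}$, $\{b,d\}$, and $\{a_1,a_2,c\}$, so $G^c$ has exactly $5$ edges, whereas $\overline{G}$ has $\binom{5}{2}-4 = 6$ edges; hence $G^c \not\cong \overline{G}$. Moreover, $\overline{G}^c$ has degree sequence $(3,2,2,2,1)$ while $G$ has degree sequence $(3,2,1,1,1)$, so the final clause fails as well. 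In other words, no analysis of the multiplicity vector can rescue your argument: the proposition itself is false for this $G$ (only the clause that $\overline{G}$ is CDC happens to survive in this example, and even that is left unproven in general once the isomorphism collapses).

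You should also know that the paper's own proof breaks at exactly the spot you identified. In its inductive step it writes $G = H_v[F]$, correctly obtains $\overline{G} = \overline{H}_v[\overline{F}]$ and $G^c = H^c_v[F^c]$, and then concludes that these two graphs are isomorphic because $H^c\cong\overline{H}$ and $F^c\cong\overline{F}$. That inference silently assumes the isomorphism $\overline{H}\to H^c$ can be chosen to carry $v$ to $v$ (or at least to a vertex at which substitution yields an isomorphic result), which is unjustified: in the example above, with $H = G'$ and $v = w$, the vertex $w$ is an endpoint of the path $H^c$ but an internal vertex of the path $\overline{H}$, and that is precisely why the two substituted graphs differ. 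What your method (and the paper's) genuinely proves is only that $G^c$ and $\overline{G}$ arise from isomorphic twin-free bases by substituting the same family of cliques at possibly non-corresponding vertices; upgrading this to $G^c\cong\overline{G}$ would require the multiplicities to be invariant under the bisimplicial matching involution, which is an additional hypothesis (satisfied trivially in the twin-free case $k_w\equiv 1$), not a consequence of $G$ being triangle-free and CDC.
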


\begin{proof}
The proof is by induction on $n = |V(G)|$.
The base case $n =1$ is trivial.
Let $n>1$ and let $G$ be a triangle-free CDC graph.
If $G$ is not a result of the substitution operation, then $G$ is twin-free and has no isolated vertices and, hence, the fact that $G^c$ is isomorphic to $\overline{G}$ follows from \Cref{triangle-free-CDC,triangle-free-with-bpm}.
Assume now that there exist two graphs $F$ and $H$ and a vertex $v\in V(H)$ such that $G = H_v[F]$.
Let $\F$, $\G$, and $\cH$ be the clique hypergraphs of $F$, $G$, and~$H$, respectively.
By \Cref{cor:clique-dualization}, we have 
\begin{equation}\label{eq:subst}
\G^d = \cH^d_v\langle\F^d\rangle\,.    
\end{equation}
Recall that, by the definition of the clique-dual, the graphs $F^c$, $G^c$, and $H^c$ are the co-occurrence graphs of the hypergraphs $\F^d$, $\G^d$, and $\cH^d$, respectively.
By \Cref{thm:substitution-CDC}, the graphs $F$ and $H$ are CDC.
Since they are isomorphic to induced subgraphs of $G$, they are triangle-free. 
Thus, by the induction hypothesis, $F^c\cong \overline{F}$ and $H^c\cong \overline{H}$.
Since $G = H_v[F]$, we infer that 
$\overline{G} = \overline{H_v[F]} = \overline{H}_v[\overline{F}]$ by the definition of substitution.
Consequently, $\overline{G}$ is isomorphic to the graph $H^c_v[F^c]$.
By \Cref{eq:subst}, the graph $G^c$ is the co-occurrence graph of the hypergraph $\cH^d_v\langle\F^d\rangle$.
Since the hypergraphs $\F^d$ and $\cH^d$ are conformal, they are the clique hypergraphs of the graphs $F^c$ and $H^c$, respectively.
Hence, by \Cref{eq:substitution-cliques}, we have $\C(H^c_v[F^c]) = \cH^d_v\langle\F^d\rangle$.
It follows that $G^c$ is the co-occurrence graph of the clique hypergraph of the graph $H^c_v[F^c]$ and hence $G^c = H^c_v[F^c]$, which is isomorphic to $\overline{G}$, as already argued above.
This shows that $G^c$ is isomorphic to $\overline{G}$.

Since $\overline{G}\cong G^c$ and $G$ is CDC, we infer using \Cref{prop:CDC-implies-2-cycle} that $\overline{G}$ is also CDC.
By \Cref{cor:triangle-free-CDC}, $G^{cc} = G$.
In particular, every triangle-free CDC graph satisfies the conditions of \Cref{obs:commuting}.
This implies that the graphs $\overline{G}^c$ and $\overline{G^c}$ are both isomorphic to~$G$.
\end{proof}

Next, note that \Cref{triangle-free-CDC} implies that for any $n\ge 5$, the cycle $C_n$ is not CDC.
However, as already observed, adding pendant edges to any such cycle results in the CDC graph $C_n\odot K_1$.
This is another construction (besides those presented in \Cref{subsec:examples-non-CDC,subsec:split-examples}) showing that the class of CDC graphs is not closed under vertex deletion.

\medskip
Finally, we obtain a polynomial-time recognition algorithm for the class of triangle-free CDC graphs.

\begin{sloppypar}
\begin{theorem}\label{triangle-free-CDC-recognition}
There exists an algorithm running in time $\mathcal{O}(|V|(|V|+|E|)^2)$ that determines if a given graph $G = (V,E)$ is a triangle-free CDC graph.
\end{theorem}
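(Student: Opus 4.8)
The plan is to convert the structural characterization of \Cref{triangle-free-CDC} into an algorithm, after first reducing an arbitrary input to the twin-free, isolated-vertex-free case in which that characterization applies. The algorithm has three phases. First, I would test whether $G$ is triangle-free; if it is not, the answer is immediately negative. Second, I would reduce $G$ to a twin-free triangle-free graph $G'$ without isolated vertices that is CDC if and only if $G$ is. Third, on $G'$ I would test Condition~\ref{item-vertices-in-bisimplicial-edges} of \Cref{triangle-free-CDC}, namely that every vertex is an endpoint of a bisimplicial edge; by that theorem this is equivalent to $G'$, and hence $G$, being CDC.

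For the reduction, I would first delete all isolated vertices. Since each isolated vertex is a copy of $K_1$, which is CDC, and the class of CDC graphs is closed under disjoint union (a consequence of \Cref{thm:substitution-CDC}, obtained by substituting the two summands into $2K_1$), deleting them preserves the CDC property in both directions; if this empties the graph, then $G$ was edgeless, hence $P_4$-free and therefore CDC, and I would answer positively. I would then compute the partition of the surviving vertices into twin classes (vertices with identical open neighborhoods) and keep a single representative from each class. As noted in the text preceding \Cref{triangle-free-CDC}, deleting a twin amounts to undoing a substitution of $2K_1$ and thus preserves the CDC property, so the resulting graph $G'$ is CDC if and only if $G$ is. A short argument shows that one pass suffices: open neighborhoods are unions of twin classes, so distinct classes retain distinct neighborhoods after the collapse, and every surviving vertex keeps a neighbor; hence $G'$ is simultaneously triangle-free, twin-free, and free of isolated vertices, exactly as required by \Cref{triangle-free-CDC}.

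The core step is testing, in $G'$, whether every vertex lies on a bisimplicial edge. For each edge $\{u,v\}$ I would decide bisimpliciality via the triangle-free reformulation recorded after the definition: $\{u,v\}$ is bisimplicial if and only if it is not the middle edge of an induced $P_4$, i.e., there is no pair $x\in N(u)\setminus\{v\}$, $y\in N(v)\setminus\{u\}$ with $x$ nonadjacent to $y$ (in a triangle-free graph such $x,y$ are automatically distinct and nonadjacent to the opposite endpoint, so this single condition governs induced-ness). With an adjacency matrix built in time $O(|V|^2)$, this check costs $O(\deg(u)\deg(v))$ per edge; marking both endpoints of every bisimplicial edge and finally verifying that all vertices are marked decides Condition~\ref{item-vertices-in-bisimplicial-edges}.

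For the running time, the triangle test, the deletion of isolated vertices, and the twin reduction are all implementable well within the budget, so the bottleneck is the bisimplicial sweep, costing $\sum_{\{u,v\}\in E}\deg(u)\deg(v) = O(|E|\,|V|^2)$. Since $|E|\,|V|\le (|V|+|E|)^2$, this is $O(|V|(|V|+|E|)^2)$, matching the claimed bound. I expect the main obstacle to be not any single computation but assembling the reduction correctly: one must check that deleting isolated vertices and collapsing twin classes preserves CDC status in both directions and produces a graph meeting all three hypotheses of \Cref{triangle-free-CDC}, so that the purely local bisimplicial test is genuinely equivalent to membership in the class. The complexity accounting for the sweep is then the remaining routine point.
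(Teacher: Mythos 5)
Your proposal is correct, and its core is the same as the paper's: reduce to the twin-free, isolated-vertex-free case via \Cref{thm:substitution-CDC}, then test Condition~\ref{item-vertices-in-bisimplicial-edges} of \Cref{triangle-free-CDC} by a sweep over edges. Where you genuinely diverge is the reduction. The paper invokes modular decomposition as a black box: it splits $G$ into $\mathcal{O}(|V|)$ non-decomposable induced subgraphs and asserts (implicitly relying on the fact that a nontrivial module in a connected triangle-free graph must be an independent set of false twins) that each piece is a single vertex or a twin-free triangle-free graph without isolated vertices, testing each piece separately. You instead perform a flat, one-pass simplification: delete isolated vertices (justified by closure of the CDC class under disjoint union, which you correctly derive by substituting both summands into $2K_1$) and collapse each false-twin class to one representative, with a self-contained argument that a single pass suffices because open neighborhoods are unions of twin classes, so distinct classes keep distinct traces on the set of representatives and no vertex loses all its neighbors. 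This buys an elementary, verifiable reduction that avoids the modular-decomposition machinery and the module analysis the paper leaves unstated; the paper's route buys a linear-time reduction step inside a standard framework. Your per-edge bisimpliciality test, $\mathcal{O}(\deg(u)\deg(v))$ pair enumeration against an adjacency matrix, is coarser than the paper's $\mathcal{O}(|V|+|E|)$ per-edge test, but your accounting is right: $\sum_{\{u,v\}\in E}\deg(u)\deg(v) = \mathcal{O}(|E|\,|V|^2)$ and $|E|\,|V|\le(|V|+|E|)^2$, so the total stays within $\mathcal{O}(|V|(|V|+|E|)^2)$. One last minor difference: you test that every vertex lies on \emph{some} bisimplicial edge, whereas the paper tests for a \emph{unique} such edge per vertex; these are interchangeable here, since \Cref{false-twins-and-bisimplicial-edges} guarantees uniqueness automatically in the twin-free setting.
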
    
\end{sloppypar}

\begin{proof}
Given a graph $G = (V,E)$, we can test in time $\mathcal{O}(|V|^3)$ if $G$ is triangle-free.
We can test in linear time if $G$ is the result of a substitution of two smaller triangle-free graphs using modular decomposition (see, e.g.,~\cite{MR1687819,zbMATH06352124}).
In fact, with this approach the problem of testing if $G$ is CDC is reduced to the same problem on $\mathcal{O}(|V|)$ induced subgraphs of $G$, none of which can be decomposed further.
By \Cref{thm:substitution-CDC}, $G$ is CDC if and only if each of the obtained subgraphs is CDC.
Each of those subgraphs is either a one-vertex graph, or a twin-free triangle-free graph without isolated vertices.
In the latter case, by \Cref{triangle-free-CDC} the CDC property of such a graph $H$ is equivalent to the existence of a unique bisimplicial perfect matching.
Testing if an edge $e\in E(H)$ is bisimplicial can be done in time $\mathcal{O}(|V(H)|+|E(H)|) = \mathcal{O}(|V|+|E|)$.
Then, $H$ is CDC if and only if every vertex of $H$ is an endpoint of a unique bisimplicial edge.
This check can be performed in time $\mathcal{O}(|V(H)|)$.
The total time complexity of the algorithm is 
$\mathcal{O}(|V|^3) + \mathcal{O}(|V|+|E|) +  \mathcal{O}(|V|\cdot|E|\cdot(|V|+|E|))$, which simplifies to $\mathcal{O}(|V|(|V|+|E|)^2)$, as claimed.
\end{proof}

\section{Split CDC graphs}\label{sec:split-CDC-graphs}

In this section, we characterize CDC split graphs.
Recall that a graph $G = (V,E)$ is said to be \emph{split} if it has a \emph{split partition}, that is, a pair $(K,I)$ such that $K$ is a clique, $I$ is an independent set, $K\cap I = \emptyset$, and $K\cup I = V$. 

We will use a characterization of minimal clique transversals of split graphs from~\cite{MilanicUnoWG2023}.
Given a graph $G$ and a set of vertices $X\subseteq V(G)$, we denote by $N_G(X)$ the set of all vertices in $V(G)\setminus X$ that have a neighbor in~$X$.
Moreover, given a vertex $v\in X$, an \emph{$X$-private neighbor} of $v$ is any vertex $w\in N_G(X)$ such that $N_G(w)\cap X = \{v\}$. 

\begin{proposition}[Milani{\v c} and Uno~\cite{MilanicUnoWG2023}]\label{prop:MCT-split}
Let $G$ be a split graph with a split partition $(K,I)$ such that $I$ is a maximal independent set and let $X\subseteq V(G)$.
Let $\XK = K\cap X$ and $\XI = I\cap X$.
Then $X$ is a minimal clique transversal of $G$ if and only if the following conditions hold:
\begin{enumerate}[(i)]
  \item\label{condition:domination-2} $\XK \neq \emptyset$ if $K$ is a maximal clique.
  \item\label{condition:domination-and-minimality} 
  $\XI = I\setminus N_G(\XK)$.
  \item\label{condition:minimality-2} Every vertex in $\XK$ has a $\XK$-private neighbor in~$I$.
\end{enumerate}
\end{proposition}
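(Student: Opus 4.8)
The plan is to first pin down the family of maximal cliques of $G$ and then translate both the transversal property and minimality into the three listed conditions, working vertex by vertex.

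First I would record the maximal cliques. Since $I$ is independent, every maximal clique meets $I$ in at most one vertex; and since $I$ is a \emph{maximal} independent set, every vertex of $K$ has a neighbour in~$I$. A short case analysis (a maximal clique disjoint from $I$ must equal $K$, and one meeting $I$ in a vertex $v$ must equal $N_G[v]$, using $N_G(v)\subseteq K$) then shows that the maximal cliques of $G$ are exactly the sets $N_G[v]$ for $v\in I$, together with $K$ itself precisely when no vertex of $I$ dominates $K$, i.e. when $K$ is a maximal clique. With this list in hand, hitting $K$ is equivalent to $\XK\neq\emptyset$ (this is condition~(i)), while hitting $N_G[v]$ is equivalent to $v\in\XI$ or $v\in N_G(\XK)$.

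For the direction assuming (i)--(iii), I would check transversality and minimality separately. Transversality follows since (i) covers $K$ and (ii) guarantees that every $v\in I\setminus N_G(\XK)$ lies in $\XI$, so each $N_G[v]$ is hit. For minimality I would show every vertex of $X$ is non-redundant: for $v\in\XI$, condition~(ii) gives $v\notin N_G(\XK)$, whence $X\cap N_G[v]=\{v\}$; for $u\in\XK$, condition~(iii) supplies a private neighbour $w\in I$ with $N_G(w)\cap\XK=\{u\}$, and since $w\in N_G(\XK)$ condition~(ii) forces $w\notin\XI$, so $X\cap N_G[w]=\{u\}$. In each case deleting the vertex uncovers a maximal clique.

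For the converse I would assume $X$ is a minimal clique transversal and read off the conditions. Condition~(i) is immediate. For (ii), transversality forces $I\setminus N_G(\XK)\subseteq\XI$, while non-redundancy of each $v\in\XI$ (whose only maximal clique is $N_G[v]$) forces $X\cap N_G[v]=\{v\}$, giving the reverse inclusion. For (iii) I would use that each $u\in\XK$ is non-redundant, so some maximal clique meets $X$ exactly in $\{u\}$; this clique is either some $N_G[w]$, directly yielding a $K'$-private neighbour, or it is $K$, forcing $\XK=\{u\}$. The main obstacle is exactly this last case: when non-redundancy comes from $K$ rather than from any $N_G[v]$, no private neighbour is handed to us. I would resolve it by invoking the \emph{maximality of $I$}: since $u\in K$ has a neighbour $w\in I$ and $\XK=\{u\}$, that $w$ automatically satisfies $N_G(w)\cap\XK=\{u\}$ and is the required private neighbour. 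This is the one step where the hypothesis that $I$ is a maximal independent set is genuinely used, and it is where I would take the most care.
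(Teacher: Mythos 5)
The paper does not prove this proposition at all---it is imported verbatim from the cited reference \cite{MilanicUnoWG2023}---so there is no in-paper proof to compare against. Your argument is correct and complete: the maximal cliques of $G$ are exactly the sets $N_G[v]$ for $v\in I$, plus $K$ when $K$ is maximal, and your translation of transversality and minimality into conditions (i)--(iii) goes through, including the one genuinely delicate point (a vertex $u\in \XK$ whose non-redundancy is witnessed only by $K$, forcing $\XK=\{u\}$, where maximality of $I$ supplies the neighbor of $u$ in $I$ that serves as the private neighbor). This is the natural self-contained proof of the statement.
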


We first describe the structure of the clique-dual of a split graph~$G$.

\begin{lemma}\label{lem:G^c}
Let $G$ be a split graph with a split partition $(K,I)$ such that $I$ is a maximal independent set, and let $u$ and $v$ be two distinct vertices of~$G$.
Then the following holds:
\begin{enumerate}[(i)]
\item\label{condition:KK} 
If $u,v\in K$, then $uv\in E(G^c)$ if and only if the sets $N_G(u)\cap I$ and $N_G(v)\cap I$ are incomparable with respect to inclusion.
\item\label{condition:KI} 
If $u\in K$ and $v\in I$, then $uv\in E(G^c)$ if and only if $uv\not\in E(G)$.
\item\label{condition:II-maximal-K}
If $u,v\in I$ and $K$ is a maximal clique in $G$, then $uv\in E(G^c)$ if and only if 
the sets $K\setminus N_G(u)$ 
and 
$K\setminus N_G(v)$ have a non-empty intersection.
\item\label{condition:II-non-maximal-K}
If $u,v\in I$ and $K$ is not a maximal clique in $G$, then $uv\in E(G^c)$.
\end{enumerate}
\end{lemma}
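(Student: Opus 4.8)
The plan is to reduce the statement to a question about minimal clique transversals and then settle it with \Cref{prop:MCT-split}. By the definition of the clique-dual, two distinct vertices $u,v$ are adjacent in $G^c$ if and only if they lie in a common minimal clique transversal of $G$. So in each of the four cases I would decide whether there is a minimal clique transversal $X$ with $u,v\in X$, writing $\XK = K\cap X$ and $\XI = I\cap X$ as in \Cref{prop:MCT-split}. One preliminary fact will be used throughout: since $I$ is a maximal independent set, every vertex $k\in K$ has a neighbor in $I$ (otherwise $I\cup\{k\}$ would be an independent set properly containing $I$); consequently a singleton $\XK=\{k\}$ automatically fulfils the minimality condition of \Cref{prop:MCT-split}, because every $I$-neighbor of $k$ is then an $\XK$-private neighbor.

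For the case $u,v\in K$ I would try $\XK=\{u,v\}$. The equality in \Cref{prop:MCT-split} then forces $\XI = I\setminus N_G(\XK)$, and the domination condition holds since $\XK\neq\emptyset$; the only genuine requirement is the existence of private neighbors, namely $w_u\in (N_G(u)\cap I)\setminus N_G(v)$ and $w_v\in(N_G(v)\cap I)\setminus N_G(u)$. These exist precisely when $N_G(u)\cap I$ and $N_G(v)\cap I$ are incomparable under inclusion. Conversely, in any minimal clique transversal with $u,v\in\XK$, the private neighbors guaranteed for $u$ and for $v$ by \Cref{prop:MCT-split} witness both non-inclusions, so incomparability is necessary as well. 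For the mixed case $u\in K$, $v\in I$, the key observation is that $v\in\XI = I\setminus N_G(\XK)$ forces $\XK\cap N_G(v)=\emptyset$; if $uv\in E(G)$ this contradicts $u\in\XK$ (so no suitable $X$ exists), whereas if $uv\notin E(G)$ the choice $\XK=\{u\}$ and $\XI = I\setminus N_G(u)$ yields a minimal clique transversal containing both, using the preliminary fact to supply a private neighbor for $u$.

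The two $I$--$I$ cases hinge on the domination condition, which demands $\XK\neq\emptyset$ exactly when $K$ is a maximal clique. If $u,v\in I$, then requiring $u,v\in\XI$ is equivalent to $\XK\subseteq K\setminus(N_G(u)\cup N_G(v)) = (K\setminus N_G(u))\cap(K\setminus N_G(v))$. When $K$ is maximal we additionally need $\XK\neq\emptyset$, so a suitable transversal exists if and only if this intersection is nonempty; for the forward direction I would take $\XK=\{k\}$ for any $k$ in the intersection, once more invoking the preliminary fact to settle the private-neighbor condition, which gives $uv\in E(G^c)$, and for the converse the emptiness of the intersection forces $\XK=\emptyset$, contradicting domination. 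When $K$ is not maximal the domination condition is vacuous, so $\XK=\emptyset$ is permitted, forcing $\XI = I$; a direct check shows that $I$ itself is a minimal clique transversal (it meets each maximal clique $N_G[w]$ in the single vertex $w$, so no vertex is redundant), whence any two vertices of $I$ are adjacent in $G^c$ and $uv\in E(G^c)$ unconditionally.

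I expect the main obstacle to be the minimality (private-neighbor) condition of \Cref{prop:MCT-split}: it is not enough to select $\XK$ avoiding the neighborhoods of the relevant $I$-vertices, one must also guarantee that every vertex retained in $\XK$ keeps a private neighbor in $I$. The clean way around this is to keep $\XK$ as small as possible, a singleton or the empty set, which makes the private-neighbor condition automatic once we know, from the maximality of $I$, that every vertex of $K$ has an $I$-neighbor. A secondary point requiring care is each converse (nonexistence) direction, where the three conditions of \Cref{prop:MCT-split} must be read as simultaneous obstructions rather than as a recipe; the arguments above indicate that in every case the obstruction reduces to either the domination condition or the forced presence of a private neighbor, so no separate case analysis beyond these should be needed.
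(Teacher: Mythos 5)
Your proposal is correct and follows essentially the same route as the paper's proof: reduce adjacency in $G^c$ to membership in a common minimal clique transversal, then apply \Cref{prop:MCT-split} case by case, using the maximality of $I$ to guarantee that every vertex of $K$ has an $I$-neighbor (so singleton sets $\XK$ automatically satisfy the private-neighbor condition). In fact, your treatment of the necessity direction in case~(i) — reading off incomparability from the $\XK$-private neighbors of an arbitrary minimal clique transversal containing $u$ and $v$ — is spelled out more explicitly than in the paper, which only verifies the candidate set $\{u,v\}\cup(I\setminus N_G(\{u,v\}))$.
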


\begin{proof}
Recall that $u$ and $v$ are adjacent in $G^c$ if and only if they belong to a common minimal clique transversal of~$G$.
We use \Cref{prop:MCT-split} to prove the four properties in order.

For claim~\eqref{condition:KK}, set $K' = \{u,v\}$ and $I' = I\setminus N_G(K')$. By \Cref{prop:MCT-split}, the set $K'\cup I'$ is a minimal clique transversal of $G$ if and only if every vertex in $K'$ has a $K'$-private neighbor in~$I$.
This is equivalent to the condition that the sets $N_G(u)\cap I$ and $N_G(v)\cap I$ are incomparable with respect to inclusion.

Consider now claim~\eqref{condition:KI}.
Assume first that $uv\in E(G^c)$. 
Then there exists a minimal clique transversal $K'\cup I'$ of $G$ such that $u\in K'\subseteq K$ and $v\in I'\subseteq I$.
By~\eqref{condition:domination-and-minimality} of \Cref{prop:MCT-split}, we have $I' = I\setminus N_G(K')$.
Hence, $u$ and $v$ are non-adjacent in~$G$.
Conversely, assume that $uv\not\in E(G)$.
Let $K' = \{u\}$ and $I' = I\setminus N_G(u)$.
Then $v\in I'$. 
Since $I$ is a maximal independent set in $G$, vertex $u$ must have a neighbor in $I$, and thus properties \eqref{condition:domination-2}--\eqref{condition:minimality-2} from \Cref{prop:MCT-split} hold for the sets $K'$ and $I'$.
It follows that $K'\cup I'$ is a minimal clique transversal of $G$ containing $u$ and $v$, and hence $uv\in E(G^c)$.

Next we show claim~\eqref{condition:II-maximal-K}.
Assume first that $uv\in E(G^c)$. 
Then there exists a minimal clique transversal $K'\cup I'$ of $G$ such that $K'\subseteq K$ and $\{u,v\}\subseteq I'\subseteq I$.
By \eqref{condition:domination-2} from \Cref{prop:MCT-split}, the set $K'$ is nonempty.
Since we also have $I' = I\setminus N_G(\XK)$, every vertex in $K'$ is adjacent to neither $u$ nor~$v$.
This implies that in $G$, vertices $u$ and $v$ have a common non-neighbor in~$K$.
Conversely, assume that the sets $K\setminus N_G(u)$ and  $K\setminus N_G(v)$ have a non-empty intersection.
Let $w$ be an arbitrary vertex in this intersection.
Let $K' = \{w\}$ and $I' = I\setminus N_G(\XK)$.
Then $K'\neq \emptyset$ and $\{u,v\}\subseteq I'$.
Furthermore, since $I$ is a maximal independent set in $G$, vertex $w$ must have a neighbor in $I$, and thus properties \eqref{condition:domination-2}--\eqref{condition:minimality-2} from \Cref{prop:MCT-split} hold for the sets $K'$ and $I'$.
Hence $K'\cup I'$ is a minimal clique transversal of $G$ containing $u$ and $v$, which implies that $uv\in E(G^c)$.

Finally, we prove claim~\eqref{condition:II-non-maximal-K}.
Note that we have $I = K'\cup I'$ where $K' = \emptyset$ and $I' = I\setminus N_G(\XK)$.
Since $K$ is not a maximal clique, conditions \eqref{condition:domination-2}--\eqref{condition:minimality-2} from \Cref{prop:MCT-split} are all satisfied, and hence $I$ is a minimal clique transversal of~$G$.
Consequently, since $\{u,v\}\subseteq I$, we infer that $uv\in E(G^c)$. 
\end{proof}

We are now ready to characterize CDC split graphs.
In order to state the characterization, we need to introduce some further notation and definitions.

\begin{definition}
Let $\cH = (V,E)$ be a hypergraph. 
We say that $\cH$ has the \emph{Sperner-private property} (or \emph{SP property} for short) if for every inclusion-wise maximal subfamily $F\subseteq E$ of hyperedges such that the hypergraph $(V,F)$ is Sperner, there exists a collection of vertices $(v_f:f\in F)$ such that for all $f\in F$, the vertex $v_f\in V$ is an \emph{$F$-private element of $f$}, that is, \hbox{$\{e\in F:v_f\in e\} = \{f\}$}.
\end{definition}

\begin{definition}
A split graph $G$ with a split partition $(K,I)$ is said to:
\begin{itemize}
\item have the \emph{Sperner-private (SP) property} if the hypergraph $(I,\{N_G(v)\cap I: v\in K\})$ has the SP property;
\item be \emph{$2$-well-dominated} if all inclusion-wise minimal subsets $S\subseteq I$ such that $K\subseteq N_G(S)$ are of size two.
\end{itemize}
\end{definition}

\begin{theorem}\label{CDC-split-graphs-characterization}
Let $G$ be a split graph with a split partition $(K,I)$ such that $I$ is a maximal independent set.
Then $G$ is CDC if and only if the following two conditions hold.
\begin{enumerate}
\item $G$ has the SP property.
\item If $K$ is a maximal clique in $G$, then $G$ is $2$-well-dominated.
\end{enumerate}
\end{theorem}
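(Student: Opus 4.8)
The plan is to reduce being CDC to a statement purely about the maximal cliques of $G^c$ and then to read off the two conditions from two disjoint families of potential counterexamples. First I would record that, by \Cref{Sperner-conformal} and the fact that the minimal clique transversals of $G$ form an antichain, $G$ is CDC if and only if every maximal clique of $G^c$ is a minimal clique transversal of $G$: every minimal clique transversal is a clique of $G^c$ and hence lies inside some maximal clique, so once all maximal cliques are known to be minimal clique transversals the antichain property forces the two families to coincide (this refines \Cref{obs:CDC-via-Gc}). Writing a clique $C$ of $G^c$ as $C=C_K\cup C_I$ with $C_K=C\cap K$ and $C_I=C\cap I$, \Cref{lem:G^c} tells me that $C_K$ is an antichain of the sets $N_G(u)\cap I$ ($u\in C_K$) and that $C_I\subseteq I\setminus N_G(C_K)$; moreover, since $I$ is a maximal independent set, each $N_G(v)\cap I$ is nonempty.

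The structural core is the claim that whenever $C$ is a maximal clique with $C_K\neq\emptyset$ one in fact has $C_I=I\setminus N_G(C_K)$. To see this I would take a hypothetical $w\in(I\setminus N_G(C_K))\setminus C_I$: by \eqref{condition:KI} of \Cref{lem:G^c} it is $G^c$-adjacent to all of $C_K$, so maximality forces it to be $G^c$-nonadjacent to some $w'\in C_I$. If $K$ is not maximal this contradicts \eqref{condition:II-non-maximal-K} of \Cref{lem:G^c}; if $K$ is maximal, then \eqref{condition:II-maximal-K} of \Cref{lem:G^c} makes $\{w,w'\}$ dominate $K$, which is absurd because any fixed $u\in C_K$ is a non-neighbor of both $w$ and $w'$. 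Hence every maximal clique meeting $K$ has the form $C_K\cup(I\setminus N_G(C_K))$, and \Cref{prop:MCT-split} shows that such a set is a minimal clique transversal exactly when every vertex of $C_K$ has a $C_K$-private neighbor in $I$ (conditions \eqref{condition:domination-2} and \eqref{condition:domination-and-minimality} being automatic here). Separately, a maximal clique with $C_K=\emptyset$ is precisely a maximal clique of the subgraph of $G^c$ induced by $I$ that dominates $K$; such a clique is never a minimal clique transversal when $K$ is maximal, whereas if $K$ is not maximal the only such clique is $I$ itself, which is the minimal clique transversal coming from $K'=\emptyset$.

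Next I would match the regime $C_K\neq\emptyset$ with the SP property. The key observation is that the SP property is equivalent to the formally stronger assertion that \emph{every} Sperner subfamily of the hypergraph $(I,\{N_G(v)\cap I:v\in K\})$ has a system of private elements: a Sperner subfamily extends to an inclusion-maximal one, and a private element relative to the larger family is a fortiori private relative to the smaller one. Since a $C_K$-private neighbor of $u$ is exactly a private element of $N_G(u)\cap I$ in the subfamily $\{N_G(u)\cap I:u\in C_K\}$, the SP property implies that \emph{every} antichain $C_K$ has private neighbors, and then by the previous paragraph every maximal clique of $G^c$ meeting $K$ is a minimal clique transversal. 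Conversely, if the SP property fails, an inclusion-maximal Sperner subfamily without a system of private elements produces, upon selecting one vertex of $K$ per member set, a maximal antichain $C_K$ with no private neighbors; because $C_K$ is a maximal antichain, no vertex of $K$ can be added to the clique $C_K\cup(I\setminus N_G(C_K))$ (its $I$-neighborhood is comparable to some member, so it is $G^c$-nonadjacent to the corresponding vertex by \eqref{condition:KK} of \Cref{lem:G^c}), and no vertex of $I$ can be added either (those in $N_G(C_K)$ are $G$-adjacent to $C_K$, the rest are already present), so this set is a maximal clique of $G^c$ that fails \eqref{condition:minimality-2} of \Cref{prop:MCT-split} and hence is not a minimal clique transversal, witnessing that $G$ is not CDC.

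Finally I would handle the regime $C_K=\emptyset$, which matters only when $K$ is a maximal clique, via \eqref{condition:II-maximal-K} of \Cref{lem:G^c}: this condition is equivalent to saying that the non-edges of the subgraph of $G^c$ induced by $I$ are exactly the pairs $\{u,v\}\subseteq I$ with $K\subseteq N_G(\{u,v\})$. Thus a clique of $G^c$ contained in $I$ is a subset of $I$ containing no such dominating pair. If $G$ is not $2$-well-dominated there is an inclusion-minimal $S\subseteq I$ with $K\subseteq N_G(S)$ and $|S|\neq 2$; such $S$ cannot have size one, since a single vertex of $I$ adjacent to all of $K$ would contradict the maximality of the clique $K$, so $|S|\ge 3$, and by minimality $S$ contains no dominating pair and is therefore a clique of $G^c$ inside $I$ that dominates $K$. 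Conversely, a clique of $G^c$ inside $I$ dominating $K$ contains an inclusion-minimal dominating subset, which has no dominating pair and hence size at least three. Therefore $G$ is $2$-well-dominated if and only if $G^c$ has no clique inside $I$ that dominates $K$, which—after extending such a clique to one that is maximal in the induced subgraph—is the same as saying that $G^c$ has no maximal clique contained in $I$. Combining the two regimes proves the theorem. The hardest step will be the structural claim $C_I=I\setminus N_G(C_K)$ for maximal cliques meeting $K$, as this is exactly where the two different behaviours of \Cref{lem:G^c} according to the maximality of $K$ must be reconciled; once it is available, the SP-extension remark and the dominating-pair reformulation are the clean observations that convert the clique structure of $G^c$ into the two stated conditions.
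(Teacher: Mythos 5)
Your proof is correct, and although it rests on the same two pillars as the paper's argument --- \Cref{prop:MCT-split}, \Cref{lem:G^c}, and the reformulation of the CDC property as ``every maximal clique of $G^c$ is a minimal clique transversal of $G$'' --- it is organized genuinely differently, and in one respect more carefully. The paper proves the two implications of the theorem separately, with each direction touching both conditions: the forward direction builds witnesses (the set $K_F$ arising from a maximal Sperner subfamily, respectively a minimal dominating set $S\subseteq I$ with $|S|\ge 3$), extends them to maximal cliques of $G^c$, and invokes the CDC hypothesis; the backward direction takes an arbitrary maximal clique $K'\cup I'$ of $G^c$ and verifies properties (i)--(iii) of \Cref{prop:MCT-split} one by one, using $2$-well-domination for (i) and the SP property for (iii). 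You instead first establish, \emph{unconditionally}, the structural claim that every maximal clique of $G^c$ meeting $K$ equals $C_K\cup(I\setminus N_G(C_K))$ --- in the paper this fact appears only inside the backward direction (as the verification of property (ii)), entangled with the other hypotheses --- and then partition the maximal cliques of $G^c$ into two regimes, proving an exact equivalence in each: the cliques meeting $K$ are all minimal clique transversals if and only if the SP property holds, and the cliques contained in $I$ behave correctly if and only if $K$ is non-maximal or $G$ is $2$-well-dominated. This yields both directions of the theorem simultaneously and makes visible exactly which maximal cliques of $G^c$ can fail. Moreover, your ``SP-extension'' remark (a system of private elements for an inclusion-maximal Sperner subfamily restricts to one for any Sperner subfamily) closes a small imprecision in the paper: in its verification of property (iii), the paper applies the SP property directly to the family $\{N_G(u)\cap I : u\in K'\}$, which is Sperner but need not be inclusion-maximal, so strictly speaking the extension argument you spell out is needed there as well. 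The one place where you are terser than you should be is the SP-failure construction, where you call $C_K\cup(I\setminus N_G(C_K))$ a clique without verification: adjacency in $G^c$ of two vertices of $I\setminus N_G(C_K)$ requires claims (iii)/(iv) of \Cref{lem:G^c} together with $C_K\neq\emptyset$; this check is routine and mirrors your structural claim, but it should be stated.
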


\begin{proof}
Recall that by definition a graph $G$ is CDC if its clique hypergraph is dually conformal.
By the definitions of dual conformality and of the clique-dual $G^c$, this is equivalent to the condition that every maximal clique of the clique-dual $G^c$ is a minimal clique transversal of~$G$.

Assume first that every maximal clique of the clique-dual $G^c$ is a minimal clique transversal of~$G$.
We first show that $G$ has the SP property, or equivalently, that the hypergraph $\cH = (I,\{N_G(v)\cap I: v\in K\})$ has the SP property.
Let $F$ be an inclusion-wise maximal family of hyperedges of $\cH$ such that the hypergraph $(I,F)$ is Sperner.
For each $f\in F$, there exists a vertex $u_f$ of $G$ such that $u_f\in K$ and $f = N_G(u_f)\cap I$.
Let $K_F = \{u_f:f\in F\}$.
We claim that $K_F$ is a clique in the clique-dual $G^c$.
Consider an arbitrary pair of distinct vertices $u$ and $u'$ in~$K_F$.
Since the hypergraph $(I,F)$ is Sperner, the sets $N_G(u)\cap I$ and $N_G(u')\cap I$ are incomparable with respect to inclusion.
By claim~\eqref{condition:KK} of \Cref{lem:G^c}, the vertices $u$ and $u'$ are adjacent in $G^c$.
Hence, $K_F$ is a clique in $G^c$.
Let $C$ be a maximal clique in $G^c$ such that $K_F\subseteq C$.
By assumption, $C$ is a minimal clique transversal of~$G$.
Thus, writing $C = K'\cup I'$ where $K'\subseteq K$ and $I'\subseteq I$, properties \eqref{condition:domination-2}--\eqref{condition:minimality-2} from \Cref{prop:MCT-split} hold for the sets $K'$ and $I'$.
In particular, since $K_F\subseteq K'$, property \eqref{condition:minimality-2} implies that for every hyperedge $f\in F$, the corresponding vertex  $u_f\in K_F$ has, in the graph $G$, a $\XK$-private neighbor $v_f$ in~$I$.
By construction of the hypergraph $\cH$, we conclude that $(v_f:f\in F)$ is a collection of vertices of $\cH$ such that for each hyperedge $f\in F$, the vertex $v_f$ is an $F$-private element of~$f$.
Thus, $\cH$ has the SP property.

Next, we show that if $K$ is a maximal clique in $G$, then $G$ is $2$-well-dominated.
Assume that $K$ is a maximal clique in $G$ and consider an arbitrary inclusion-wise minimal subset $S\subseteq I$ such that $K\subseteq N_G(S)$.
Since $K$ is a maximal clique in $G$, the set $S$ is of size at least two.
Suppose for a contradiction that $|S|\ge 3$. 
We claim that $S$ is a clique in $G^c$.
Consider two distinct vertices $u,v\in S$.
By the minimality of $S$, we have $K\nsubseteq N_G(u)\cup N_G(v)$, and thus by claim~\eqref{condition:II-maximal-K} of \Cref{lem:G^c}, $u$ and $v$ are adjacent in $G^c$.
It follows that $S$ is a clique in $G^c$, as claimed.
Let $C = K'\cup I'$ be a maximal clique in $G^c$ such that $S\subseteq C$, $K'\subseteq K$, and $I'\subseteq I$. 
Since $K\subseteq N_G(S)$, every vertex in $K$ is adjacent in $G$ to a vertex in $S$, which by claim~\eqref{condition:KI} of \Cref{lem:G^c} implies that  every vertex in $K$ is non-adjacent in $G^c$ to a vertex in~$S$.
Thus, $K' = \emptyset$.
Recall the assumption that every maximal clique of the clique-dual $G^c$ is a minimal clique transversal of~$G$.
In particular, $C$ is a minimal clique transversal of~$G$.
However, since $K$ is a maximal clique of $G$, this contradicts the fact that $C\cap K = K'=  \emptyset$.
This shows that $G$ is $2$-well-dominated.

Let us now prove that the stated conditions are also sufficient for the CDC property.
To this end, assume that $G$ has the SP property and, furthermore, that if $K$ is a maximal clique in $G$, then $G$ is $2$-well-dominated. 
We need to show that every maximal clique of the clique-dual $G^c$ is a minimal clique transversal of~$G$.
Let $C = K'\cup I'$ be an arbitrary maximal clique of $G^c$ with $K'\subseteq K$ and $I'\subseteq I$.
To complete the proof of our claim, we verify that properties \eqref{condition:domination-2}--\eqref{condition:minimality-2} from \Cref{prop:MCT-split} hold for the sets $K'$ and $I'$.

We first establish property~\eqref{condition:domination-2}.
Suppose for a contradiction that $K$ is a maximal clique in $G$ but $K' = \emptyset$.
Since $C = I'$ is a maximal clique in $G^c$, every vertex in $K$ is non-adjacent in $G^c$ with a vertex in $I'$.
By claim~\eqref{condition:KI} of \Cref{lem:G^c}, this implies that $K\subseteq N_G(I')$.
Thus, there exists an inclusion-wise minimal set $S\subseteq I'$ such that $K\subseteq N_G(S)$.
Since $K$ is a maximal clique in $G$, our assumption on $G$ implies that $G$ is $2$-well-dominated. 
This means that $S = \{x,y\}$ for two distinct vertices $x,y\in I'$.
However, by claim~\eqref{condition:KK} of \Cref{lem:G^c} the fact that $K\subseteq N_G(\{x,y\})$ implies that $x$ and $y$ are non-adjacent in $G^c$, contradicting the fact that $I'$ is a clique in $G^c$. 
Thus, property~\eqref{condition:domination-2} of \Cref{prop:MCT-split} holds.

Next we establish property~\eqref{condition:domination-and-minimality} of \Cref{prop:MCT-split}.
Claim~\eqref{condition:KI} of \Cref{lem:G^c} implies that no vertex in $K'$ is adjacent in $G$ with a vertex in $I'$, that is, $I'\subseteq I\setminus N_G(K')$.
Suppose that the inclusion is strict. 
Then there exists a vertex $u\in I\setminus(I'\cup N_G(K'))$.
We consider two cases depending on whether $K'$ is empty or not.
Suppose first that $K' = \emptyset$.
By property~\eqref{condition:domination-2} of \Cref{prop:MCT-split}, we have that $K$ is not a maximal clique.
Thus $I$ is a clique in $G^c$ by claim~\eqref{condition:II-non-maximal-K} of \Cref{lem:G^c}.
Since $K'= \emptyset$, we have $I'\subseteq I$, and the maximality of $I'$ implies that $I' = I$.
However, this contradicts the fact that $u\in I\setminus I'$.
It remains to analyze the case when $K' \neq \emptyset$.
Note that $u\not\in C$ and therefore, by the maximality of $C$, there exists a vertex $v\in C$ that is not adjacent to $u$ in $G^c$. 
The choice of $u$ implies that $u$ is not adjacent in $G$ to any vertex in $K'$.
By claim~\eqref{condition:KI} of \Cref{lem:G^c}, this means that $u$ is adjacent in $G^c$ to every vertex in $K'$.
In particular, the vertex $v$ cannot belong to $K'$ and must therefore belong to $I'$.
Since $u$ and $v$ are two vertices in $I$ that are non-adjacent in $G^c$, we obtain from claim~\eqref{condition:II-non-maximal-K} of \Cref{lem:G^c} that $K$ is a maximal clique in $G$ and, furthermore, by claim~\eqref{condition:II-maximal-K} of \Cref{lem:G^c}, that $K\subseteq N_G(\{u,v\})$.
By the assumption of this case, we have $K'\neq \emptyset$, thus there exists a vertex $w\in K'$. 
Since $u$ is not adjacent in $G$ to $w$, we must have $vw\in E(G)$. 
Consequently, by claim~\eqref{condition:KI} of \Cref{lem:G^c}, we have $vw\not\in E(G^c)$, contradicting the fact that $C$ is a clique in $G^c$. 
This shows that property~\eqref{condition:domination-and-minimality} of \Cref{prop:MCT-split} holds.

Finally, we show that property~\eqref{condition:minimality-2} of \Cref{prop:MCT-split} holds, that is, that every vertex in $\XK$ has a $\XK$-private neighbor in~$I$.
By claim~\eqref{condition:KK} of \Cref{lem:G^c}, for every two distinct vertices $u$ and $v$ in $K'$, the sets $N_G(u)\cap I$ and $N_G(v)\cap I$ are incomparable with respect to inclusion.
Thus, by the SP property of $G$, there exists a collection of vertices $(v_x:x\in K')$ 
such that for all $x\in K'$, the vertex $v_x\in I$ is a $K'$-private neighbor of~$x$.
Thus, property~\eqref{condition:minimality-2} of \Cref{prop:MCT-split} holds.

Thus, we conclude that $C$ is indeed a minimal clique transversal of~$G$.
\end{proof}

Using \Cref{CDC-split-graphs-characterization}, it is not difficult to verify that the graphs from \Cref{ex:settled-combs,ex:settled-anticombs} are CDC, while those from \Cref{ex:combs,ex:anticombs} are not.

\begin{theorem}\label{SP-property-recognition}
Let $\mathcal{H} = (V,E)$ be a hypergraph.
There exists an algorithm running in time $\mathcal{O}(|V||E|^2)$ that determines if $\cH$ has the SP property.
\end{theorem}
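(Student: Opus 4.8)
The plan is to replace the quantification over all inclusion-maximal Sperner subfamilies---of which there may be exponentially many---by a simple, locally checkable condition on individual hyperedges. Concretely, I would prove the following characterization: $\cH$ \emph{fails} the SP property if and only if there is a hyperedge $f\in E$ each of whose vertices is contained in some hyperedge $g\in E$ that is \emph{incomparable} to $f$ (that is, $g\neq f$, $g\not\subseteq f$, and $f\not\subseteq g$). Call such an $f$ a \emph{witness}. Once this equivalence is in hand, the recognition problem reduces to scanning, for each $f$, the hyperedges incomparable to it and testing whether together they cover $f$.

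The equivalence rests on two observations. First, a \emph{monotonicity} fact: if $F_0\subseteq F$ are Sperner subfamilies both containing $f$ and $f$ has no $F_0$-private element, then $f$ has no $F$-private element either, since enlarging the family can only destroy private elements. As every Sperner subfamily extends to an inclusion-maximal one, it follows that the SP property fails precisely when there is \emph{some} Sperner subfamily $F_0\ni f$ in which $f$ has no private element. Second, an \emph{antichain-reduction} fact: suppose every vertex of $f$ lies in some hyperedge incomparable to $f$, and let $\mathcal{A}$ be the set of all such hyperedges. Replacing $\mathcal{A}$ by its inclusion-maximal members $\mathcal{A}^\ast$ yields an antichain that still covers $f$ (each $g\in\mathcal{A}$ is contained in some $g^\ast\in\mathcal{A}^\ast$, so $g\cap f\subseteq g^\ast\cap f$), and each member of $\mathcal{A}^\ast$ remains incomparable to $f$; hence $\{f\}\cup\mathcal{A}^\ast$ is a Sperner subfamily in which $f$ has no private element. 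Combining the two directions gives the characterization: the existence of a witness $f$ is equivalent to the existence of a maximal Sperner subfamily in which some hyperedge has no private element.

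For the algorithm I would iterate over all $f\in E$. Fixing $f$, I first build its characteristic vector in time $\mathcal{O}(|V|)$, so that membership tests in $f$ cost $\mathcal{O}(1)$. Then, for each $g\in E$, I test in time $\mathcal{O}(|V|)$ whether $g$ is incomparable to $f$ (by checking $g\not\subseteq f$ and $f\not\subseteq g$) and, if so, mark every vertex of $g\cap f$. After processing all $g$, the hyperedge $f$ is a witness if and only if all its vertices are marked. This inner pass costs $\mathcal{O}(|V||E|)$ per $f$, hence $\mathcal{O}(|V||E|^2)$ in total, and $\cH$ has the SP property if and only if no $f$ turns out to be a witness.

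The main obstacle is getting the characterization right, and in particular justifying that we may ignore both the maximality of $F$ and the requirement that the covering hyperedges form a Sperner family among themselves: the former is handled by monotonicity together with extension to a maximal family, the latter by passing to the inclusion-maximal covering hyperedges. Once these two reductions are established, the complexity bound is routine, the dominant cost being the $\mathcal{O}(|V||E|^2)$ pairwise comparability-and-covering scan.
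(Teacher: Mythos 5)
Your proposal is correct and follows essentially the same route as the paper: the same witness characterization (SP fails if and only if some hyperedge is covered by the union of the hyperedges incomparable to it), the same per-hyperedge scan costing $\mathcal{O}(|V||E|)$, and the same overall $\mathcal{O}(|V||E|^2)$ bound. The only differences are cosmetic---the paper extracts the required antichain by taking a \emph{minimal} covering subfamily (minimality forces incomparability) whereas you take the inclusion-maximal members of the covering family, and your explicit monotonicity-plus-extension-to-a-maximal-Sperner-subfamily argument actually spells out a step that the paper's proof leaves implicit.
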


\begin{proof}
We prove the theorem by showing that the condition that $\cH$ does not have the SP property is equivalent to the following condition: there exists a hyperedge $e\in E$ such that $e$ is a subset of the union of hyperedges of $\cH$ that are incomparable with $e$ (with respect to inclusion).
Let us first argue that this is enough. 
To verify this condition, we iterate over all hyperedges $e\in E$, and compute the union of the incomparable hyperedges. 
For each of the $\mathcal{O}(|E|)$ hyperedges, the above computation can be done in time $\mathcal{O}(|V||E|)$.

To see that this reformulation is equivalent with the lack of SP property, note that by definition we must have a Sperner subfamily $F\subseteq E$ and a hyperedge $f\in F$ such that $f$ does not have an $F$-private element.
This implies that $f$ is a subset of the hyperedges in $F\setminus\{f\}$.
Note also that all these hyperedges are incomparable with $f$ since $F$ is Sperner.
To complete our proof, we need to show that if there exists a hyperedge $e\in E$ such that $e$ is a subset of the union of hyperedges of $\cH$ that are incomparable with $e$, then we can construct a Sperner subfamily $F\subseteq E$ containing $e$ such that $e$ is a subset of the union of the hyperedges in $F\setminus\{e\}$.
To see this, consider all hyperedges in $E$ that are incomparable with $e$, and choose a minimal subfamily that contains $e$ as a subset. Such a minimal subfamily together with $e$ must be Sperner.
\end{proof}

\begin{corollary}\label{cor:split-CDC-recognition}
There exists an algorithm running in time $\mathcal{O}(|V|^8)$ that determines if a given graph $G = (V,E)$ is a CDC split graph.
\end{corollary}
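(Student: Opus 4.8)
The plan is to combine the structural characterization of \Cref{CDC-split-graphs-characterization} with the two algorithmic primitives already at our disposal: the SP-property tester of \Cref{SP-property-recognition} and the subtransversal tester of \Cref{subtransversal-running-time}. Writing $n = |V|$, the algorithm would proceed as follows. First I would recognize whether $G$ is a split graph and, if so, compute a split partition $(K,I)$ in which $I$ is a \emph{maximal} independent set; this is easy, since from any split partition one may repeatedly move into $I$ any vertex of $K$ having no neighbor in $I$, which keeps $K$ a clique and $I$ independent and terminates with $I$ maximal, all in time $\mathcal{O}(n^2)$. As the CDC property is intrinsic to $G$, any such partition may be used when invoking \Cref{CDC-split-graphs-characterization}.

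By \Cref{CDC-split-graphs-characterization}, $G$ is CDC if and only if (a) $G$ has the SP property, and (b) whenever $K$ is a maximal clique, $G$ is $2$-well-dominated. Condition (a) is tested directly by applying \Cref{SP-property-recognition} to the hypergraph $(I,\{N_G(v)\cap I : v\in K\})$, which has at most $n$ vertices and at most $n$ hyperedges, costing $\mathcal{O}(n^3)$. Whether $K$ is a maximal clique is decided in $\mathcal{O}(n^2)$ by checking whether some vertex of $I$ is adjacent to all of $K$. Thus the only delicate part is testing the $2$-well-dominated condition, and this is where I expect the main work (and the dominant running time) to lie.

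The key reformulation would be the following. Since $S\subseteq I$ and $I$ is independent, $N_G(S)\subseteq K$, so $K\subseteq N_G(S)$ holds exactly when every $k\in K$ has a neighbor in $S$; that is, the sets $S\subseteq I$ with $K\subseteq N_G(S)$ are precisely the transversals of the hypergraph $\mathcal{D} = (I,\{N_G(k)\cap I : k\in K\})$, and the inclusion-minimal such $S$ are precisely the minimal transversals of $\mathcal{D}$. Because $I$ is a maximal independent set, every $k\in K$ has a neighbor in $I$, so each hyperedge of $\mathcal{D}$ is nonempty and $I$ itself is a transversal. When $K$ is a maximal clique, no vertex of $I$ is adjacent to all of $K$, so $\mathcal{D}$ has no transversal of size one; hence $G$ is $2$-well-dominated if and only if $\mathcal{D}$ has no minimal transversal of size at least three. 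Finally, a minimal transversal of size at least three exists if and only if some three-element set is a subtransversal (any three elements of such a minimal transversal form a subtransversal, and conversely any size-three subtransversal extends to a minimal transversal of size at least three). Therefore I would iterate over all $\binom{|I|}{3} = \mathcal{O}(n^3)$ triples $S\subseteq I$ and test, using \Cref{subtransversal-running-time}, whether $S$ is a subtransversal of $\mathcal{D}$; $G$ is $2$-well-dominated precisely when no such triple passes the test.

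It remains to bound the running time of this last step, which dominates the whole algorithm. For the hypergraph $\mathcal{D}$ we have $\dim(\mathcal{D})\le |I|\le n$, $|E(\mathcal{D})|\le |K|\le n$, and maximum degree $\gD(\mathcal{D})\le |K|\le n$, so by \Cref{subtransversal-running-time} each size-three subtransversal test runs in time $\mathcal{O}\!\left(n\cdot n\cdot n^{3}\right) = \mathcal{O}(n^5)$. Over all $\mathcal{O}(n^3)$ triples this gives $\mathcal{O}(n^8)$, which subsumes the $\mathcal{O}(n^3)$ cost of the SP-property test and the lower-order costs of preprocessing, yielding the claimed $\mathcal{O}(|V|^8)$ bound. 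The one point needing care is the treatment of isolated vertices of $\mathcal{D}$ (vertices of $I$ with no neighbor in $K$): these lie in no hyperedge and hence in no minimal transversal, so restricting $\mathcal{D}$ to $N_G(K)\cap I$ leaves all transversal-theoretic conclusions intact while keeping $\mathcal{D}$ a legitimate hypergraph.
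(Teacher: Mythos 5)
Your proposal is correct and follows essentially the same route as the paper's proof: normalize the split partition so that $I$ is a maximal independent set, test the SP property via \Cref{SP-property-recognition} on the hypergraph $(I,\{N_G(v)\cap I: v\in K\})$, and reduce the $2$-well-domination test (when $K$ is a maximal clique) to the absence of size-three subtransversals of that same hypergraph, checked via \Cref{subtransversal-running-time} over all $\mathcal{O}(|V|^3)$ triples, for a total of $\mathcal{O}(|V|^8)$. Your explicit justification of the equivalence between ``no minimal transversal of size at least three'' and ``no size-three subtransversal,'' and your remark about isolated vertices of $I$, are details the paper leaves implicit.
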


\begin{proof}
Given a graph $G = (V,E)$, we can test in time $\mathcal{O}(|V|+|E|)$ if $G$ is split and if this is the case, compute a split partition $(K,I)$ of $G$~\cite{MR637832}.
If $K$ contains a vertex with no neighbors in $I$, we remove it from $K$ and add it to~$I$.
This can also be done in linear time since the algorithm from~\cite{MR637832} first computes the vertex degree, and $K$ contains a vertex with no neighbors in $I$ if and only if $K$ contains a vertex with degree $|K|-1$.

We may thus assume that $(K,I)$ is a split partition of $G$ such that $I$ is a maximal independent set.
We now apply \Cref{CDC-split-graphs-characterization} and test whether $G$ has the SP property and whether it is $2$-well-dominated when $K$ is a maximal clique.
To test the SP property, we first compute the hypergraph $\cH = (I,\{N_G(v)\cap I: v\in K\})$.
This can be done in time $\mathcal{O}(|K||I|) = \mathcal{O}(|V|^2)$.
We have $|V(\cH)| = |I| = \mathcal{O}(|V|)$ and $|E(\cH)| \le |K| = \mathcal{O}(|V|)$.
By \Cref{SP-property-recognition}, we can determine in time $\mathcal{O}(|V(\cH)||E(\cH)|^2) = \mathcal{O}(|V|^3)$ if $\cH$ has the SP property.
If $\cH$ does not have the SP property, then we conclude that $G$ is not a CDC graph.
If $\cH$ has the SP property and $K$ is not a maximal clique in $G$ (which we can test in linear time), then we conclude that $G$ is a CDC graph.
If $\cH$ has the SP property and $K$ is a maximal clique in $G$, then we still need to test if $G$ is $2$-well-dominated.
Note that since $K$ is a maximal clique, every set $S\subseteq I$ such that $K\subseteq N_G(S)$ has size at least two. 
It thus suffices to verify that the hypergraph $\cH$ does not contain any subtransversal of size three.
For each of the $\mathcal{O}(|V|^3)$ subsets $S\subseteq I$ of size three, we apply \Cref{subtransversal-running-time} to verify in time 
$\mathcal{O}(|V(\cH)||E(\cH)|^{4})  = \mathcal{O}(|V|^5)$ if $S$ is a subtransversal of~$\cH$.
If no such set is a subtransversal of $\cH$, then $G$ is $2$-well-dominated, and we conclude that $G$ is a CDC graph.
Otherwise, we conclude that $G$ is not a CDC graph.
The total time complexity of the algorithm is $\mathcal{O}(|V|^8)$.
\end{proof}

\section{A relaxation of the CDC property: cycles of hypergraphs} \label{sec:cycles}

We conclude the paper with a generalization of the concept of CDC graphs, or, more precisely, of pairs of CDC graphs and their clique-duals (see the paragraph after the proof of \Cref{prop:edge-types-0-and-2}).

First, we show that any dual pair of conformal hypergraphs gives rise to a pair of CDC graphs that are clique-duals of each other.
A \emph{dually conformal pair of Sperner hypergraphs} is a pair $(\cH_1,\cH_2)$ of Sperner hypergraphs such that $\cH_2 = \cH_1^d$ and both $\cH_1$ and $\cH_2$ are conformal.
To each dually conformal pair $(\cH_1,\cH_2)$ of Sperner hypergraphs we can naturally associate \emph{a pair of supporting graphs} $(G_1,G_2)$ such that $G_i = G(\cH_i)$ for $i= 1,2$.

\begin{observation}\label{clique-duality-observation}
Let $(\cH_1,\cH_2)$ be a dually conformal pair of Sperner hypergraphs and let $(G_1,G_2)$ be the corresponding pair of supporting graphs.
Then $G_1$ and $G_2$ are CDC graphs that are clique-duals of each other.
\end{observation}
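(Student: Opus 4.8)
The plan is to reduce the whole statement to the single chain of hypergraph identities $\C(G_2) = \cH_2 = \cH_1^d = \C^d(G_1)$, and then to read off both the clique-dual relation and the CDC property directly from the definitions. The argument splits naturally into a structural step (recovering each hypergraph as a clique hypergraph), an identification step (showing $G_1^c = G_2$), and a symmetry step.

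First I would exploit that both $\cH_1$ and $\cH_2$ are conformal and Sperner. By \Cref{Sperner-conformal}, a conformal Sperner hypergraph is the clique hypergraph of its co-occurrence graph, so that $\C(G_1) = \cH_1$ and $\C(G_2) = \cH_2$. This is the only place where conformality of \emph{both} members of the pair is used, and it is essential: without it, $\C(G_i)$ would only be the ``conformalization'' of $\cH_i$ rather than $\cH_i$ itself.

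Next I would combine this with the hypothesis $\cH_2 = \cH_1^d$. Taking duals, $\C^d(G_1) = \C(G_1)^d = \cH_1^d = \cH_2 = \C(G_2)$. By the definition of the clique-dual, $G_1^c$ is exactly the co-occurrence graph of $\C^d(G_1)$; since $\C^d(G_1) = \cH_2$ and the supporting graph $G_2 = G(\cH_2)$ is by definition the co-occurrence graph of $\cH_2$, this gives $G_1^c = G_2$. In particular $\C(G_1^c) = \C(G_2) = \cH_2 = \C^d(G_1)$, so by \Cref{obs:CDC-via-Gc} the maximal cliques of $G_1^c$ are precisely the minimal clique transversals of $G_1$, i.e. $G_1$ is CDC. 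Finally I would invoke symmetry: since $\cH_1$ is Sperner, the duality involution yields $\cH_1 = \cH_1^{dd} = \cH_2^d$, so $(\cH_2,\cH_1)$ is again a dually conformal pair of Sperner hypergraphs with supporting graphs $(G_2,G_1)$; running the same argument verbatim gives $G_2^c = G_1$ and that $G_2$ is CDC. Hence $G_1$ and $G_2$ are CDC graphs that are clique-duals of each other.

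This is essentially a bookkeeping of definitions, so I do not expect a deep obstacle. The one point requiring care is the identification of the two occurrences of the co-occurrence-graph operation $G(\cdot)$ — the one defining the supporting graph $G_2 = G(\cH_2)$ and the one implicit in the clique-dual $G_1^c = G(\C^d(G_1))$ — and in particular verifying that they are taken over the same vertex set $V(\cH_1) = V(\cH_2)$, so that $G_1^c = G_2$ is a genuine equality of graphs rather than an equality only up to isolated vertices.
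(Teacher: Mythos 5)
Your proposal is correct and takes essentially the same route as the paper's proof: both use \Cref{Sperner-conformal} to identify $\cH_i = \C(G_i)$, then obtain $G_1^c = G(\cH_1^d) = G(\cH_2) = G_2$ (and symmetrically $G_2^c = G_1$) and the CDC property directly from the definitions. The only cosmetic differences are that you route the CDC conclusion through \Cref{obs:CDC-via-Gc} rather than noting directly that $\C^d(G_i)$ is conformal, and that you make explicit the symmetry step via $\cH_1 = \cH_1^{dd} = \cH_2^d$, which the paper leaves implicit in its ``Similarly''.
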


\begin{proof}
For $i\in \{1,2\}$, since $\cH_i$ is Sperner and conformal, we have by \Cref{Sperner-conformal} that $\cH_i$ is the clique hypergraph of its co-occurrence graph~$G_i$.
In particular, this implies that $G_i$ is CDC.
Furthermore, by the definition of the clique-dual, we infer that $G_1^c = G(\cH_1^d) = G(\cH_2) = G_2$. Similarly, $G_2^c = G_1$.
\end{proof}

Recall from \cref{subsec:clique-dual} that the {\em conformalization} of a  hypergraph $\cH$ is the hypergraph denoted by $\cH^c$ and defined as the  clique hypergraph of the co-occurrence graph of $\cH$.

For a hypergraph $\cH$, applying operations $c$ and $d$ alternately, we get the following sequence of hypergraphs: 
\begin{equation}\label{hypergraph-sequence}
\cH, \cH^c, \cH^{cd}, \cH^{cdc}, \cH^{cdcd}, \ldots    
\end{equation}
For all $i\ge 0$, let us denote by $\cH_i$ the $i$-th hypergraph in the sequence \eqref{hypergraph-sequence} (with $\cH_0 = \cH$), that is, $\cH_i$ is the hypergraph obtained from $\cH$ after exactly $i$ operations $c$ or $d$ in an alternating way.
In this sequence, all hypergraphs (except maybe $\cH_0$) are Sperner and have the same (finite) vertex set. 

Consider the derived directed graph $D_\cH$ with vertex set $\{\cH_0, \cH_1, \cH_2, \ldots\}$ and edge set $\{(\cH_i,\cH_{i+1}):i\ge 0, \cH_{i+1}\neq \cH_{i}\}$, that is, we keep all the \textit{non-loop} edges corresponding to the above sequence of operations.
Each edge is labeled either $c$ or $d$ depending on the type of the corresponding operation ($c$ for conformalization and $d$ for dualization).
Since the two operations alternate, conformalization is only applied to hypergraphs with even indices, and dualization only to hypergraphs with odd indices.
In particular, all odd-indexed hypergraphs $\cH_{2i-1}$ are conformal.
If any even-indexed hypergraph $\cH_{2i}$ is conformal, then $\cH_{2i+1} = \cH_{2i}^c = \cH_{2i}$ and such an edge is omitted in~$D$.
On the other hand, since any odd-indexed hypergraph $\cH_{2i-1}$ is conformal, we have $\cH_{2i} = \cH_{2i-1}^d = \cH_{2i-1}$ if and only if $\cH_{2i-1}$ consists of a single vertex and a single hyperedge of size one, by \Cref{thm:self-dual-hypergraphs}.
This is only possible if $\cH_0 = \cH$ consists of a single vertex and a single hyperedge of size one.

From now on we assume that $\cH$ has at least two vertices.

\begin{lemma}\label{lem:out-degree-one}
If $|V(\cH)|>1$, then each vertex of $D_{\cH}$ has out-degree exactly one.    
\end{lemma}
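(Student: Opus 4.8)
The plan is to prove that every vertex $\mathcal{K}$ of $D_\cH$ has out-degree exactly one by examining, for each index $m$ with $\cH_m = \mathcal{K}$, the non-loop successor (if any) produced at step $m$, and then checking that all these successors coincide and that at least one exists. The operation performed at step $m$ depends only on the parity of $m$---conformalization $c$ when $m$ is even and dualization $d$ when $m$ is odd---so the only mechanism that could give a vertex two distinct out-edges is its appearance at positions of both parities, where different operations act. Ruling this out is the heart of the argument.

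First I would record the parity facts already noted before the lemma: each odd-indexed $\cH_{2i-1}$ is conformal and Sperner, while each even-indexed $\cH_{2i}$ with $i\ge 1$ is Sperner (being a dual), and only $\cH_0$ may fail to be Sperner. I then split into cases according to the nature of $\mathcal{K}$. If $\mathcal{K}$ is not Sperner, it can only be $\cH_0$, so it occurs at the single position $0$; applying $c$ gives $\cH_1 = \mathcal{K}^c \ne \mathcal{K}$ since $\mathcal{K}^c$ is Sperner while $\mathcal{K}$ is not, yielding a unique out-edge. If $\mathcal{K}$ is Sperner but not conformal, it cannot occur at any odd position, hence only at even positions; every such occurrence applies $c$ and produces the same successor $\mathcal{K}^c$, which differs from $\mathcal{K}$ because $\mathcal{K}$ is not conformal. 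So again the out-degree is one.

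The remaining, and crucial, case is when $\mathcal{K}$ is both conformal and Sperner, the only type able to sit at positions of both parities. Here I would use two facts. First, conformalization fixes exactly the Sperner conformal hypergraphs, so at any even occurrence $\mathcal{K} = \cH_{2i}$ we get $\cH_{2i+1} = \mathcal{K}^c = \mathcal{K}$, a loop that is omitted; note this also shows $\mathcal{K}$ occurs at the odd position $2i+1$, so $\mathcal{K}$ always occurs at some odd position. Second, by \Cref{thm:self-dual-hypergraphs}, a conformal self-dual hypergraph has a single vertex, so since $|V(\cH)|>1$ and $\mathcal{K}$ is conformal we have $\mathcal{K}^d \ne \mathcal{K}$; thus the odd occurrence applies $d$ and contributes the genuine edge $(\mathcal{K}, \mathcal{K}^d)$. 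As even occurrences contribute only loops and every odd occurrence contributes the same edge $(\mathcal{K}, \mathcal{K}^d)$, the out-degree is exactly one.

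The main obstacle is precisely this conformal-Sperner case, since such a hypergraph is the unique kind that can appear at both parities and therefore a priori carries two candidate successors, $\mathcal{K}^c$ from a $c$-step and $\mathcal{K}^d$ from a $d$-step. The resolution is that the two structural facts above collapse this apparent ambiguity: the $c$-step is always a loop (hence discarded), while the $d$-step is always non-trivial and independent of the position, leaving the single out-edge $(\mathcal{K}, \mathcal{K}^d)$.
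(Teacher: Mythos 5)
Your proof is correct and rests on the same three facts as the paper's own argument: odd-indexed hypergraphs are Sperner and conformal, conformalization fixes exactly the Sperner conformal hypergraphs, and (via \Cref{thm:self-dual-hypergraphs}) dualization cannot fix a conformal hypergraph on more than one vertex. The paper packages this as existence plus uniqueness-by-contradiction (two out-edges would have to carry labels $c$ and $d$, but a $d$-edge forces an odd position, hence conformality, making the $c$-step a loop), whereas you run an exhaustive trichotomy on the type of the vertex; the mathematical content is essentially identical.
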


\begin{proof}
Consider an arbitrary vertex $\cH_i$ of $D_{\cH}$.
If $\cH_{i+1}\neq \cH_{i}$, then $\cH_{i+1}$ is an out-neighbor of $\cH_{i}$.
If $\cH_{i+1}= \cH_{i}$, then $i$ is even (since $i$ odd would imply that $|V(\cH)|=1$, as explained above) and therefore $\cH_{i+2} = \cH_{i+1}^d \neq \cH_{i+1} = \cH_{i}$ and  $\cH_{i+2}$ is an out-neighbor of $\cH_{i}$.
Thus, in either case, the out-degree of $\cH_i$ is at least one.

Suppose for a contradiction that the out-degree of $\cH_i$ is at least two.
Then it must be exactly two, since there can only be one outgoing edge labeled with $c$ and one outgoing edge labeled with~$d$.
Let $(\cH_i,\cH_{i+1})$ and $(\cH_j,\cH_{j+1})$ be the two outgoing edges from $\cH_{i} = \cH_j$ labeled $c$ and $d$, respectively.
Note that this labeling assumption is without loss of generality, since otherwise we could swap the roles of $i$ and~$j$.
Then $j$ is odd and hence the hypergraph $\cH_j$ is conformal.
But this implies that $\cH_{i+1} = \cH_i^c = \cH_j^c = \cH_j = \cH_i$, a contradiction to the fact that  $(\cH_i,\cH_{i+1})$ is an edge in $D_\cH$.
\end{proof}

We infer that the digraph $D_\cH$ has a very restricted structure.
By \Cref{lem:out-degree-one}, all the out-degrees are exactly one.
Since $D_\cH$ is a finite digraph with at most one vertex with in-degree $0$ (namely, $\cH_0$), it consists of a (possibly empty) directed path, followed by a unique directed cycle.
Therefore, there is a smallest positive integer $p = p(\cH)$ called the \emph{period} of $\cH$ describing the periodic behavior of the sequence~\eqref{hypergraph-sequence} (after eliminating repeated consecutive elements), defined as the length (that is, the number of edges) in the unique directed cycle in $D_\cH$.

Since $D_\cH$ does not contain any loops, the period satisfies  $p(\cH)\ge 2$.

We now analyze the structure of short cycles in $D_\cH$. 
To this end, \Cref{lem:HcEqualsHd} will be useful.
Consider an edge $(\cH_i,\cH_{i+1})$ of $D_{\cH}$ labeled $d$, that is, $\cH_{i+1} = \cH_i^d$.
We say that this edge is of type: 
\begin{itemize}
\item $0$ if none of $\cH_i$ and $\cH_{i+1}$ is conformal;
\item $1$ if exactly one among $\cH_i$ and $\cH_{i+1}$ is conformal;
\item $2$ if both $\cH_i$ and $\cH_{i+1}$ are conformal.
\end{itemize}

\begin{proposition}\label{prop:edge-types-0-and-2}
If $|V(\cH)|>1$, then $D_\cH$ has no edges of type $0$, the period $p(\cH)$ is always even, and $p(\cH) = 2$ if and only if $D_\cH$ has an edge of type~$2$.
\end{proposition}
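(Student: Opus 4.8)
The plan is to prove the three assertions in the order: absence of type-$0$ edges; the equivalence ``$p(\cH)=2$ iff $D_\cH$ has a type-$2$ edge''; and finally the evenness of $p(\cH)$, which will drop out cleanly from the equivalence.

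First I would record the basic dichotomy for $d$-edges. Every edge of $D_\cH$ labelled $d$ has the form $(\cH_{2i-1},\cH_{2i})$ with $\cH_{2i}=\cH_{2i-1}^d$, and its source $\cH_{2i-1}$ is odd-indexed, hence conformal. Thus no $d$-edge has a non-conformal source, which rules out type $0$ and proves the first assertion; moreover such an edge is of type $2$ precisely when its target $\cH_{2i}$ is conformal and of type $1$ when $\cH_{2i}$ is non-conformal. It is convenient to note in parallel that the out-edge of a vertex is labelled $d$ exactly when the vertex is conformal (at an odd index, or at an even conformal index whose $c$-operation is a loop) and labelled $c$ exactly when the vertex is non-conformal, and that every $c$-edge runs from a non-conformal vertex to a conformal one.

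Next I would prove that $D_\cH$ contains a type-$2$ edge if and only if $p(\cH)=2$. For the forward direction, suppose $(v,v^d)$ is a type-$2$ edge, so both $v$ and $v^d$ are conformal; since $v$ is Sperner (it is the source of a $d$-edge, hence not $\cH_0$) we have $v^{dd}=v$, so the trajectory from $v$ is $v\to v^d\to v\to\cdots$, an immediately periodic $2$-cycle, with $v\neq v^d$ because a self-dual conformal hypergraph has a single vertex by \Cref{thm:self-dual-hypergraphs}. Hence $v$ lies on the unique cycle of $D_\cH$, and that cycle has length $2$. For the converse, assume $p(\cH)=2$, so the cycle is $v_0\to v_1\to v_0$, and the crucial point is to exclude a non-conformal vertex. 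If $v_0$ were non-conformal, then $v_0\to v_1$ would be a $c$-edge with $v_1=v_0^c$ and $v_1\to v_0$ a $d$-edge with $v_0=v_1^d$; writing $G$ for the co-occurrence graph of $v_0$, conformalization gives $v_1=\C(G)$ and hence $v_0=\C^d(G)$, whose co-occurrence graph is $G^c$. As the co-occurrence graph of $v_0$ is $G$ by definition, this forces $G=G^c$, whence $G=K_1$ by \Cref{thm:self-dual-graphs}, contradicting $|V(\cH)|>1$. Therefore both $v_0$ and $v_1$ are conformal and $v_0\to v_1$ is of type $2$.

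Finally I would deduce that $p(\cH)$ is even. If $p(\cH)=2$ this is immediate. If $p(\cH)\geq 3$, then by the equivalence just proved $D_\cH$ has no type-$2$ edge, so every $d$-edge of the cycle is of type $1$ and thus runs from a conformal to a non-conformal vertex, while every $c$-edge runs from a non-conformal to a conformal vertex. Hence consecutive vertices around the cycle always have opposite conformality status, so the cycle strictly alternates between conformal and non-conformal vertices, and a cyclic sequence alternating between two values has even length. The main obstacle is the converse half of the equivalence, namely ruling out a non-conformal vertex in a $2$-cycle: this is exactly where the classification of graphs with $G=G^c$ (\Cref{thm:self-dual-graphs}), together with its hypergraph counterpart \Cref{thm:self-dual-hypergraphs}, becomes indispensable.
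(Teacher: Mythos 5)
Your proof is correct and takes essentially the same route as the paper: type-$0$ edges are excluded because every $d$-edge has an odd-indexed, hence conformal, source; a type-$2$ edge immediately closes a $2$-cycle; a $2$-cycle with a non-conformal vertex is ruled out by a self-duality classification; and evenness follows from the alternation of conformal and non-conformal vertices (equivalently, of $c$- and $d$-labels) around the cycle. The only difference is cosmetic: where the paper applies the hypergraph-level \Cref{lem:HcEqualsHd} to the non-conformal vertex, you pass to its co-occurrence graph and apply \Cref{thm:self-dual-graphs} (i.e., $G=G^c$ only for $K_1$), two interchangeable tools that the paper derives from the same \Cref{lem:cliques-ct-imply-G-complete}.
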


\begin{proof}
Consider an edge $(\cH_i,\cH_{i+1})$ of $D_\cH$ labeled~$d$.
Then the index $i$ must be odd, and hence $\cH_i$ is conformal.
Thus, there are no edges of type~$0$.

Assume that $(\cH_i,\cH_{i+1})$ is of type~$2$.
Then, $(\cH_{i},\cH_{i+1})$ is a dually conformal pair of Sperner hypergraphs.
Furthermore, $\cH_{i+2} = \cH_{i+1}^c = \cH_{i+1}$ and 
$\cH_{i+3} = \cH_{i+2}^d = \cH_{i+1}^d = \cH_{i}$.
Thus, we obtain a cycle of length two in $D_\cH$.
By \Cref{clique-duality-observation}, this cycle corresponds to a pair of CDC graphs that are clique-duals of each other.
In this case, the period $p(\cH)$ equals two.

Assume now that all edges of $D_\cH$ labeled $d$ are of type~$1$. 
In this case, labels $c$ and $d$ alternate on every walk in $D_\cH$.
Suppose that $D_\cH$ contains a cycle of length two.
Since exactly one of the two edges of the cycle is labeled by $d$, the cycle consists of two distinct hypergraphs $\cH_i$ and $\cH_{i+1}$ such that $\cH_i$ is conformal and $\cH_{i+1}$ is not.
In particular, $i$ is odd, the edge $(\cH_i,\cH_{i+1})$ is labeled by $d$, that is, $\cH_{i+1} = \cH_i^d$, and the edge 
 $(\cH_{i+1},\cH_i) = (\cH_{i+1},\cH_{i+2})$ is labeled by $c$, that is, $\cH_{i} = \cH_{i+1}^c$.
Since $\cH_{i+1} = \cH_i^d$, we have $\cH_{i+1}^d = \cH_i$.
Combined with $\cH_{i+1}^c = \cH_{i}$ and \Cref{lem:HcEqualsHd}, we derive a contradiction with the assumption that $|V(\cH_{i+1})| = ||V(\cH)|>1$.
We conclude that the length of the unique cycle in $D_\cH$ is even and at least four, as claimed.
\end{proof}

As noted in the above proof, the case when $p(\cH) = 2$, that is, the case when $D_\cH$ has a cycle of length $2$, corresponds to a CDC graph and its clique-dual (see also \Cref{prop:CDC-implies-2-cycle}), which is the main topic of this paper.
Longer periods can be viewed as a relaxation of the CDC property.
In this case, conformal and non-conformal hypergraphs alternate.
In particular, the case of period $4$ corresponds to a pair of non-CDC graphs that are clique-duals of each other (or, equivalently, to a non-CDC graph $G$ satisyfing $G^{cc} = G$; see \Cref{example:GccGnotCDC}).

Somewhat surprisingly, such longer cycles are rare. 
An exhaustive computer search shows that there are none of them when $n = |V(\cH)| \leq 8$. 
However, for $n = 9$ hypergraphs with periods $4$ and $8$ were found.
Nevertheless, for $n \leq 10$ we did not find any hypergraphs with periods $6$ or more than~$8$.

\begin{example}
The following sequence describes an example with vertex set $\{0,1,\ldots, 8\}$ with period~$8$.
Note that the second and the last hypergraphs in the sequence coincide. 
\begin{align*}
   E(\cH) & =\big\{\{{\tt 0},{\tt 3}\}, \{{\tt 0},{\tt 5}\}, \{{\tt 0},{\tt 7}\}, \{{\tt 0},{\tt 8}\}, \{{\tt 1},{\tt 6}\}, \{{\tt 1},{\tt 8}\}, \{{\tt 2},{\tt 3}\}, \{{\tt 2},{\tt 4}\}, \{{\tt 2},{\tt 5}\}, \{{\tt 2},{\tt 8}\}, \{{\tt 3},{\tt 4}\}, 
   \\& ~~~~~\!~ \{{\tt 3},{\tt 7}\},\{{\tt 4},{\tt 5}\},\{{\tt 4},{\tt 6}\}, \{{\tt 4},{\tt 7}\}, \{{\tt 4},{\tt 8}\}, \{{\tt 5},{\tt 6}\}, \{{\tt 5},{\tt 7}\}, \{{\tt 6},{\tt 7}\}, \{{\tt 6},{\tt 8}\}, \{{\tt 7},{\tt 8}\}\big\}\,,\\[1em]
  {\color{blue} E(\cH^c}) & {\color{blue}=\big\{\{{\tt 0},{\tt 3},{\tt 7}\}, \{{\tt 0},{\tt 5},{\tt 7}\}, \{{\tt 0},{\tt 7},{\tt 8}\}, \{{\tt 1},{\tt 6},{\tt 8}\}, \{{\tt 2},{\tt 3},{\tt 4}\}, \{{\tt 2},{\tt 4},{\tt 5}\},\{{\tt 2},{\tt 4},{\tt 8}\}, \{{\tt 3},{\tt 4},{\tt 7}\},}
 \\& ~~~~~\!~ {\color{blue}\{{\tt 4},{\tt 5},{\tt 6},{\tt 7}\}, \{{\tt 4},{\tt 6},{\tt 7},{\tt 8}\}\big\}}\,,\\[1em] 
E(\cH^{cd}) &= \big\{\{{\tt 0},{\tt 1},{\tt 4}\}, \{{\tt 0},{\tt 2},{\tt 3},{\tt 6}\}, \{{\tt 0},{\tt 4},{\tt 6}\}, \{{\tt 0},{\tt 4},{\tt 8}\}, \{{\tt 1},{\tt 2},{\tt 7}\}, \{{\tt 1},{\tt 4},{\tt 7}\}, \{{\tt 2},{\tt 6},{\tt 7}\},\{{\tt 2},{\tt 7},{\tt 8}\}, 
\\& ~~~~~\!~\{{\tt 3},{\tt 5},{\tt 8}\}, \{{\tt 4},{\tt 6},{\tt 7}\}, \{{\tt 4},{\tt 7},{\tt 8}\}\big\}\,,\\[1em]
E(\cH^{cdc}) &= \big\{\{{\tt 0},{\tt 1},{\tt 2}\}, \{{\tt 0},{\tt 1},{\tt 4}\}, \{{\tt 0},{\tt 2},{\tt 3},{\tt 6}\}, \{{\tt 0},{\tt 2},{\tt 3},{\tt 8}\}, \{{\tt 0},{\tt 4},{\tt 6}\}, \{{\tt 0},{\tt 4},{\tt 8}\}, \{{\tt 1},{\tt 2},{\tt 7}\}, \{{\tt 1},{\tt 4},{\tt 7}\},\\& ~~~~~\!~
\{{\tt 2},{\tt 6},{\tt 7}\}, \{{\tt 2},{\tt 7},{\tt 8}\}, \{{\tt 3},{\tt 5},{\tt 8}\}, \{{\tt 4},{\tt 6},{\tt 7}\}, \{{\tt 4},{\tt 7},{\tt 8}\}\big\}\,,\\[1em]
E(\cH^{cdcd}) &= \big\{\{{\tt 0},{\tt 3},{\tt 7}\}, \{{\tt 0},{\tt 5},{\tt 7}\}, \{{\tt 0},{\tt 7},{\tt 8}\}, \{{\tt 1},{\tt 3},{\tt 4},{\tt 7}\}, \{{\tt 1},{\tt 6},{\tt 8}\},\{{\tt 2},{\tt 3},{\tt 4}\}, \{{\tt 2},{\tt 4},{\tt 5}\}, \{{\tt 2},{\tt 4},{\tt 8}\}\big\}\,,\\[1em]
E(\cH^{cdcdc}) &= \big\{\{{\tt 0},{\tt 3},{\tt 7}\}, \{{\tt 0},{\tt 5},{\tt 7}\}, \{{\tt 0},{\tt 7},{\tt 8}\}, \{{\tt 1},{\tt 3},{\tt 4},{\tt 7}\}, \{{\tt 1},{\tt 4},{\tt 7},{\tt 8}\}, \{{\tt 1},{\tt 6},{\tt 8}\}, \{{\tt 2},{\tt 3},{\tt 4}\}, \{{\tt 2},{\tt 4},{\tt 5}\}, \\& ~~~~~\!~\{{\tt 2},{\tt 4},{\tt 8}\}, \{{\tt 4},{\tt 5},{\tt 7}\}\big\}\,,\\[1em]
E(\cH^{cdcdcd}) &= \big\{\{{\tt 0},{\tt 1},{\tt 2},{\tt 5}\}, \{{\tt 0},{\tt 1},{\tt 4}\}, \{{\tt 0},{\tt 4},{\tt 6}\}, \{{\tt 0},{\tt 4},{\tt 8}\}, \{{\tt 1},{\tt 2},{\tt 7}\}, \{{\tt 1},{\tt 4},{\tt 7}\}, \{{\tt 2},{\tt 6},{\tt 7}\}, \{{\tt 2},{\tt 7},{\tt 8}\}, \\& ~~~~~\!~\{{\tt 3},{\tt 5},{\tt 8}\}, \{{\tt 4},{\tt 6},{\tt 7}\}, \{{\tt 4},{\tt 7},{\tt 8}\}\big\}\,,\\[1em]
E(\cH^{cdcdcdc}) &= \big\{\{{\tt 0},{\tt 1},{\tt 2},{\tt 5}\}, \{{\tt 0},{\tt 1},{\tt 4}\}, \{{\tt 0},{\tt 2},{\tt 5},{\tt 8}\}, \{{\tt 0},{\tt 2},{\tt 6}\}, \{{\tt 0},{\tt 4},{\tt 6}\}, \{{\tt 0},{\tt 4},{\tt 8}\}, \{{\tt 1},{\tt 2},{\tt 7}\}, \{{\tt 1},{\tt 4},{\tt 7}\}, \\& ~~~~~\!~\{{\tt 2},{\tt 6},{\tt 7}\}, \{{\tt 2},{\tt 7},{\tt 8}\}, \{{\tt 3},{\tt 5},{\tt 8}\}, \{{\tt 4},{\tt 6},{\tt 7}\}, \{{\tt 4},{\tt 7},{\tt 8}\}\big\}\,,\\[1em]
E(\cH^{cdcdcdcd}) &= \big\{\{{\tt 0},{\tt 3},{\tt 7}\}, \{{\tt 0},{\tt 5},{\tt 7}\}, \{{\tt 0},{\tt 7},{\tt 8}\}, \{{\tt 1},{\tt 6},{\tt 8}\}, \{{\tt 2},{\tt 3},{\tt 4}\}, \{{\tt 2},{\tt 4},{\tt 5}\}, \{{\tt 2},{\tt 4},{\tt 8}\}, \{{\tt 4},{\tt 5},{\tt 6},{\tt 7}\}\big\}\,,\\[1em]
{\color{blue}E(\cH^{cdcdcdcdc})} &{\color{blue}= \big\{\{{\tt 0},{\tt 3},{\tt 7}\}, \{{\tt 0},{\tt 5},{\tt 7}\}, \{{\tt 0},{\tt 7},{\tt 8}\}, \{{\tt 1},{\tt 6},{\tt 8}\}, \{{\tt 2},{\tt 3},{\tt 4}\}, \{{\tt 2},{\tt 4},{\tt 5}\}, \{{\tt 2},{\tt 4},{\tt 8}\}, \{{\tt 3},{\tt 4},{\tt 7}\},}\\& ~~~~~\!~ {\color{blue}\{{\tt 4},{\tt 5},{\tt 6},{\tt 7}\}, \{{\tt 4},{\tt 6},{\tt 7},{\tt 8}\}\big\}}\,.
\end{align*}
\hfill$\blacktriangle$
\end{example}

\begin{sloppypar}
\begin{remark} 
Similar discrete dynamical systems for hypergraphs, based on complementation instead of conformalization, were considered in several papers, in fact, in a much more general setting (product of posets), by Cameron and Fon-Der-Flaass~\cite{MR1356845}, Deza and Fukuda~\cite{MR1047873}, and Fon-Der-Flaass~\cite{MR1197471}.
In contrast to our observations, very long cycles appear in such dynamical systems, even for relatively small hypergraphs. 
See also Khachiyan, Boros, Elbassioni, and Gurvich~\cite{MR2352109}.
\end{remark} 
\end{sloppypar}

\section{Conclusion}\label{sec:discussion} 

We conclude with some questions left open by this work.

The complexity of recognizing CDC graphs, posed in~\cite{boros2023dually}, is still open.
While \Cref{triangle-free-CDC-recognition,cor:split-CDC-recognition} imply that the problem of recognizing CDC graphs can be solved in polynomial time for bipartite graphs and split graphs, the problem is also open in the special case of cobipartite graphs.
    
Given that the class of CDC graphs is not hereditary (that is, closed under vertex deletion), one should probably not expect a nice structural characterization of CDC graphs.
Some natural questions relating the class of CDC graphs to hereditary classes are also open.
The first one asks about the smallest hereditary graph class containing the class of CDC graphs.
In particular, the following question is open.

\begin{question}
Is every graph an induced subgraph of a CDC graph? 
\end{question}

Note that \Cref{cor:corona} implies that any triangle-free graph is an induced subgraph of a CDC graph.

From the other side, what is the largest hereditary class that is a subclass of the class of CDC graphs?
Equivalently, can we describe the family of non-CDC graphs that are minimally non-CDC with respect to the induced subgraph relation?

\begin{question}
What are the minimally non-CDC graphs, that is, graphs $H$ that are not CDC but every proper induced subgraph of $H$ is CDC?
\end{question}

\begin{sloppypar}
By \Cref{cor:P4-free}, every non-CDC graph contains an induced $P_4$; in particular, every minimally non-CDC graph contains an induced~$P_4$.
The four graphs depicted in \Cref{fig:non-CDC} are minimally non-CDC.
But there might be more.
\end{sloppypar}

Several questions also remain open with respect to the clique-dual transformation.

\begin{question}\label{question:GH-clique-dual}
Given two graphs $G$ and $H$, what is the complexity of deciding if $H = G^c$?
\end{question}

A polynomial-time algorithm to the above problem would follow from a polynomial-time algorithm to any of the following two problems.

\begin{question}\label{question:Gc}
Given a graph $G$, what is the complexity of computing $G^c$?
\end{question}

\begin{question}\label{question:uvGc}
Given a graph $G$ and two vertices $u,v\in V(G)$, what is the complexity of deciding if $u$ and $v$ are adjacent in $G^c$?
\end{question}

Note that if $G$ belongs to a graph class with a polynomial bound on the number of maximal cliques, then the problem from \Cref{question:uvGc} and, hence, also the problems from \Cref{question:GH-clique-dual,question:Gc} can be solved in polynomial time.
Indeed, in this case we can compute in polynomial time the clique hypergraph of $G$ (using, e.g., the algorithm from  Tsukiyama et al.~\cite{MR476582}), and then apply \Cref{subtransversal-running-time} to the given vertex pair $u,v$.

Recall that if $G$ is a CDC graph, then $G^{cc} = G$, and, as shown in \Cref{example:GccGnotCDC}, the converse implication fails.
However, we are not aware of a non-CDC graph $G$ such that $G^{cc} \neq G$ and  $G^{cc} \cong G$.

\begin{question}
Is there a graph $G$ such that $G^{cc}$ is isomorphic to $G$ but not equal to it?
\end{question}

Recall that in \Cref{sec:cycles}, we observed that the dynamical system defined on the hypergraphs with a given vertex set via the conformalization and dualization operations can have directed cycles of lengths $4$ and $8$. 
Which other cycle lengths are possible?

\subsection*{Acknowledgements}

\begin{sloppypar}
The authors are grateful to Cl\'ement Dallard for helpful discussions and to the two anonymous reviewers for their valuable suggestions.
Part of the work for this paper was done in the framework of bilateral projects between Slovenia and the USA, partially financed by the Slovenian Research and Innovation Agency (BI-US-22/24/003, BI-US-22/24/076, BI-US/22--24--093, BI-US/22--24--149, and BI-US/24--26--088).
The work of the third author is supported in part by the  Slovenian Research and Innovation Agency (I0-0035, research program P1-0285 and research projects J1-3003, J1-4008, J1-4084, J1-60012, and N1-0370) and by the research program CogniCom (0013103) at the University of Primorska.
The second and fourth authors were working within the framework of the HSE University Basic Research Program.
The work of the fifth author is partially supported by JSPS KAKENHI Grant Number JP17K00017, 20H05964 and 21K11757, Japan.
\end{sloppypar}

\end{document}